\definecolor{red}{rgb}{1,0,0}
\definecolor{blu}{rgb}{0,0,1}
\theoremstyle{plain}
\newtheorem{thm}{Theorem}[section]
\newtheorem{cor}[thm]{Corollary}
\newtheorem{lem}[thm]{Lemma}
\newtheorem{prop}[thm]{Proposition}
\newtheorem{conj}[thm]{Conjecture}
\newtheorem{obs}[thm]{Observation}
\newtheorem{quest}[thm]{Question}
\theoremstyle{definition}
\newtheorem{rem}[thm]{Remark}
\theoremstyle{definition}
\newtheorem{defn}[thm]{Definition}
\theoremstyle{definition}
\newtheorem{ex}[thm]{Example}
\newcommand{\R}{\mathbb{R}}
\newcommand{\Z}{\mathbb{Z}}
\newcommand{\bit}{\begin{itemize}}
\newcommand{\eit}{\end{itemize}}
\newcommand{\ben}{\begin{enumerate}}
\newcommand{\een}{\end{enumerate}}
\newcommand{\beq}{\begin{equation}}
\newcommand{\eeq}{\end{equation}}
\newcommand{\bpf}{\begin{proof}}
\newcommand{\epf}{\end{proof}\ms}
\newcommand{\ms}{\medskip}
\newcommand{\dsp}{\displaystyle}
\newcommand{\aw}{\operatorname{aw}}
\newcommand{\sz}{\operatorname{sz}}
\newcounter{casenum}	
\newcounter{subcasenum}	
\numberwithin{subcasenum}{casenum}
\newcounter{subsubcasenum}	
\numberwithin{subsubcasenum}{subcasenum}
\renewcommand{\thecasenum}{\arabic{casenum}}
\newcounter{stagenum}
\newenvironment{mycases}
{
  \list{}{%
    \leftmargin0.5cm   
    \rightmargin0cm
  }
  \item\relax
	\setcounter{casenum}{0}
}
{	
	\endlist
}
\newcommand{\mycase}[1]{
	\vspace{0.5em}
	
	\refstepcounter{casenum}
	\noindent\hspace{-0.5cm}\textit{Case \thecasenum: #1}
}
\title{Rainbow arithmetic progressions}
\author{Steve Butler$^{1}$
\and Craig Erickson$^{2}$
\and Leslie Hogben$^{1,3}$
\and Kirsten Hogenson$^{1}$
\and Lucas Kramer$^{4}$
\and Richard L. Kramer$^{1}$
\and Jephian Chin-Hung Lin$^{1}$
\and Ryan R. Martin$^{1,5}$
\and Derrick Stolee$^{1,6}$
\and Nathan Warnberg$^{7}$
 \and Michael Young$^{1,8,9}$}
\begin{document}
\maketitle

\footnotetext[1]{Department of Mathematics, Iowa State University, Ames, IA 50011, USA. \newline{\{butler, lhogben, kahogens, ricardo, chlin, rymartin, dstolee, myoung\}@iastate.edu}}
\footnotetext[2]{Department of Mathematics and Computer Science, Grand View University, Des Moines, IA 50316, USA. (cerickson@grandview.edu)}
\footnotetext[3]{American Institute of Mathematics, 600 E. Brokaw Road, San Jose, CA 95112, USA (hogben@aimath.org).}
\footnotetext[4]{Mathematics Department, Bethel College, North Newton, KS 67117, USA. (lkramer@bethelks.edu)}
\footnotetext[5]{Research supported in part by National Security Agency grant H98230-13-1-0226.}
\footnotetext[6]{Department of Computer Science, Iowa State University, Ames, IA 50011, USA.}
\footnotetext[7]{Department of Mathematics and Statistics, University of Wisconsin-La Crosse, La Crosse, WI 54601, USA. (nwarnberg@uwlax.edu}
\footnotetext[8]{Research supported in part by NSF DMS 0946431.}
\footnotetext[9]{Corresponding author.}


\begin{abstract}  In this paper, we investigate the anti-Ramsey (more precisely, anti-van der Waerden) properties of arithmetic progressions. For positive integers $n$ and $k$, the expression $\aw([n],k)$ denotes the smallest number of colors with which the integers $\{1,\ldots,n\}$ can be colored and still guarantee  there is a rainbow arithmetic progression of length $k$. We establish that $\aw([n],3)=\Theta(\log n)$ and $\aw([n],k)=n^{1-o(1)}$ for $k\geq 4$.

For positive integers $n$ and $k$, the expression $\aw(\Z_n,k)$ denotes the smallest number of colors with which elements of the cyclic group of order $n$ can be colored and still guarantee  there is a rainbow arithmetic progression of length $k$. In this setting, arithmetic progressions can ``wrap around,'' and 
$\aw(\Z_n,3)$ behaves quite differently from $\aw([n],3)$, depending on the divisibility of $n$. As shown in [Jungi\'c et al.,  \textit{Combin. Probab. Comput.}, 2003],  $\aw(\Z_{2^m},3) = 3$ for any positive integer $m$.  We establish that $\aw(\Z_n,3)$ can be computed from knowledge of $\aw(\Z_p,3)$ for all of the prime factors $p$ of $n$.  However, for $k\geq 4$, the behavior is similar to the previous case, that is, $\aw(\Z_n,k)=n^{1-o(1)}$.  
\end{abstract}\smallskip

\noindent{\bf Keywords.} {arithmetic progression; rainbow coloring; anti-Ramsey; 
Behrend construction.}
\noindent{\bf AMS subject classifications.} 05D10, 11B25, 11B30, 11B50, 11B75.


\section{Introduction}\label{sec:intro}

Let $G$ be an additive (abelian) group such as the integers or the integers modulo $n$, and let $S$ be a finite nonempty subset of $G$.
A {\em $k$-term arithmetic progression ($k$-AP)} in $S$ is a set of distinct elements of the form \[a,a+d,a+2d,\dots,a+(k-1)d\] where $d\geq 1$ and $k\geq 2$.    An {\em $r$-coloring} of  $S$ is a function $c:S\rightarrow [r]$, where $[r]:=\{1,\dots,r\}$. We say such a coloring is {\em exact} if $c$ is surjective.  Given an $r$-coloring $c$ of $S$, the {\em $i^{\it th}$  color class} is $C_i:=\{x\in S: c(x)=i\}$.
An arithmetic progression is called {\em rainbow} if the image of the progression under the $r$-coloring is injective. Formally, given $c:S\rightarrow [r]$ we say a $k$-term arithmetic progression is rainbow if $\{c(a+id) : i=0,1,\ldots,k-1\}$ has $k$ distinct values.

The {\em anti-van der Waerden number} $\aw(S,k)$ is the smallest $r$ such that every exact $r$-coloring of $S$ contains a rainbow $k$-term arithmetic progression.  Note that this tautologically defines $\aw(S,k)=|S|+1$ whenever $|S|<k$, and this definition retains the property that there is a coloring with  $\aw(S,k)-1$ colors that has no rainbow $k$-AP. \ Since $\aw(S,2)=2$ for all $S$, we assume henceforth that $k\ge 3$.  

Several important results on the existence of rainbow 3-APs implying information about $\aw([n],3)$ and $\aw(\Z_n,3)$ (in our notation) have been established by Jungi\'c, et al.~\cite{JLMNR03}.   A preliminary study of the anti-van der Waerden
number was done by Uherka  in \cite{U13}; it should be noted the notation there is slightly different, with $AW(k,n)$ used to denote our $\aw([n],k)$. Other results    on balanced colorings of the integers with no rainbow $3$-AP have been obtained  by Axenovich and Fon-Der-Flaass~\cite{AF04} and Axenovich and Martin~\cite{AM06}.

First, we consider the set $S=[n]$. The value of $\aw([n],3)$ is logarithmic in $n$:
\begin{thm}\label{RM1} For every integer $n\geq 9$,
	\[\left\lceil\log_3n\right\rceil+2\le \aw([n],3) \le \left\lceil\log_2 n \right\rceil +1 .\]
Moreover, $\aw([n],3)=\left\lceil\log_3n\right\rceil+2$ for $n\in\{3,4,5,6,7\}$ and $\aw([8],3)=5$. \end{thm}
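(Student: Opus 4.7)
My plan proves the lower bound, the general upper bound, and the small-case exact values as three parts, all resting on a single structural lemma about the minima of the color classes.

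\textbf{Lower bound construction.} For every $n\ge 3$ I would exhibit an exact $(\lceil\log_3 n\rceil+1)$-coloring of $[n]$ with no rainbow 3-AP. Writing $v_3$ for the 3-adic valuation, I set $c(1)$ to a new color used only on $\{1\}$ and $c(x)=v_3(x-1)$ for $x\ge 2$. This uses $1+(\lfloor\log_3(n-1)\rfloor+1)=\lceil\log_3 n\rceil+1$ distinct colors. To check rainbow-freeness, any 3-AP $(a,b,c)$ disjoint from $\{1\}$ satisfies $v_3(2b)=v_3(b)$ and $v_3(a+c)=\min(v_3(a),v_3(c))$ whenever $v_3(a)\neq v_3(c)$, so at least two of $v_3(a),v_3(b),v_3(c)$ must coincide; while a 3-AP $(1,b,2b-1)$ satisfies $v_3(2(b-1))=v_3(b-1)$, forcing $c(b)=c(2b-1)$.

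\textbf{The doubling lemma and the generic upper bound.} For any rainbow-3-AP-free exact $r$-coloring of $[n]$, reorder the colors so that $a_1<a_2<\cdots<a_r$, where $a_i=\min C_i$. The key lemma is $a_j\ge 2a_i$ for all $i<j$: if on the contrary $2a_i-a_j\ge 1$, then the 3-AP $(2a_i-a_j,\,a_i,\,a_j)$ sits in $[n]$ with colors $c(2a_i-a_j),i,j$, and rainbow-freeness forces $c(2a_i-a_j)\in\{i,j\}$; but $2a_i-a_j<a_i=\min C_i$ rules out color $i$ and $2a_i-a_j<a_j=\min C_j$ rules out color $j$, a contradiction. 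Iterating gives $a_r\ge 2^{r-1}$, whence $r\le\log_2 n+1$; for $n$ not a power of $2$ this yields $r\le\lceil\log_2 n\rceil$ and thus the claimed upper bound.

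\textbf{The power-of-two case.} The main obstacle will be $n=2^k$ with $k\ge 4$, where the doubling lemma only gives $r\le k+1$. To rule out $r=k+1$, I note that then $a_{k+1}\ge 2^k=n$ combined with the doubling inequalities forces $a_i=2^{i-1}$ for every $i\le k+1$. This determines $c(1)=1$, $c(2)=2$, $c(4)=3$, and $c(8)=4$. Successive 3-AP constraints then pin down intermediate colors: $(1,2,3)$ and $(2,3,4)$ force $c(3)=2$; $(1,3,5)$ and $(3,4,5)$ force $c(5)=2$; $(2,4,6)$, $(4,5,6)$, and $(4,6,8)$ force $c(6)=3$; and $(5,6,7)$ and $(1,4,7)$ force $c(7)=3$. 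Since $k\ge 4$ gives $9\le n$, the constraint $c(9)\in\{1,2\}$ coming from $(1,5,9)$ is incompatible with $c(9)\in\{3,4\}$ coming from $(7,8,9)$, a contradiction. Hence $r\le k$, completing $\aw([2^k],3)\le k+1=\lceil\log_2 n\rceil+1$. (That this argument breaks down exactly at $k=3$ is consistent with $\aw([8],3)=5$.)

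\textbf{Small cases $n\le 8$.} The doubling lemma alone gives a sharp upper bound for each small $n$: $a_r\ge 2^{r-1}$ with $a_r\le n$ forces $r\le 2$ when $n=3$, $r\le 3$ when $n\in\{4,5,6,7\}$, and $r\le 4$ when $n=8$. The matching lower bounds for $n\in\{3,4,5,6,7\}$ come directly from the lower-bound construction above, which realizes exactly $\lceil\log_3 n\rceil+1$ colors. For $n=8$ the construction only yields $3$ colors, so I would additionally exhibit a concrete rainbow-3-AP-free exact $4$-coloring such as $c=(1,2,2,3,2,3,3,4)$ and verify by direct inspection of the twelve 3-APs in $[8]$ that no rainbow arises, giving $\aw([8],3)\ge 5$ and hence $\aw([8],3)=5$.
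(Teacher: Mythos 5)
Your plan is correct, and it diverges from the paper's proof in two substantive ways worth noting. The paper establishes the lower bound $\aw([n],3)\ge\lceil\log_3 n\rceil+2$ by induction on $n$: Proposition~\ref{RK:thm:lwrawboundoneforkequalsthree} shows $\aw([3m-s],3)\ge\aw([m],3)+1$ for $s\in\{0,1,2\}$ by lifting a rainbow-free coloring of $[m]$ to one of $[3m-s]$, and Lemma~\ref{awn3log3lowerbnd} iterates this. You instead give a one-shot construction: a private color on $1$ and $c(x)=v_3(x-1)$ otherwise, using the identity $\lfloor\log_3(n-1)\rfloor+1=\lceil\log_3 n\rceil$ and the $3$-adic valuation argument ($v_3$ of an odd multiple equals $v_3$ of the odd part, and $v_3(u+w)=\min$ when the valuations differ) to rule out rainbow $3$-APs. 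Both arguments are sound; yours is more explicit, while the paper's recursion is reused later (e.g., in the corollaries of Section~2.4). The two also coincide with your doubling lemma, which is exactly Lemma~\ref{RK:lem:powersoftwo} and handles all non-powers-of-two identically. The second and larger difference is the power-of-two case: the paper invokes the computer-verified value $\aw([16],3)=5$ as a seed and then iterates $b_j\ge 2^{j-5}b_5>2^{j-1}$, whereas you derive a self-contained contradiction by noting that $r=k+1$ forces $a_i=2^{i-1}$ for every $i$, then chasing forced colors $c(3)=c(5)=2$, $c(6)=c(7)=3$ through a short list of $3$-APs until $(1,5,9)$ and $(7,8,9)$ place $c(9)$ in disjoint sets. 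This makes your proof of the $n\ge 9$ inequality entirely computation-free, at the cost of a slightly longer case analysis. Finally, for the small cases you derive the upper bounds $r\le 2,3,4$ from the same doubling inequality and match them with your construction for $n\le 7$ and an ad hoc $4$-coloring of $[8]$, which I verified is rainbow-free; the paper simply cites the computational table in Remark~\ref{RK:obs:values}. Your route is a valid, more elementary alternative, and correctly locates the one place ($k=3$) where the Case-3 argument must fail, consistent with $\aw([8],3)=5$.
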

Theorem \ref{RM1} is proven by Lemmas~\ref{awn3log3lowerbnd} and~\ref{LHprop220nolab}  (for $n\ge 9$), and
 Remark~\ref{RK:obs:values} gives exact values of $\aw([n],3)$ that justify the second statement.  We conjecture that the lower bound is, essentially, correct:
\begin{conj}\label{RMconj} There exists a constant $C$ such that $\aw([n],3)\leq\left\lceil\log_3n\right\rceil+C$ for all $n\geq 3$. \end{conj}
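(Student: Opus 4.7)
The plan is to bound $r := \aw([n],3) - 1$ above by carefully analyzing the structure of an exact $r$-coloring $c: [n] \to [r]$ that avoids a rainbow $3$-AP. Pick one representative $x_i$ from each color class and relabel so that $1 \leq x_1 < x_2 < \cdots < x_r \leq n$. The key constraint is this: for any pair $i < j$ with $x_i + x_j$ even, the midpoint $m_{ij} = (x_i + x_j)/2$ satisfies $c(m_{ij}) \in \{c(x_i), c(x_j)\}$, for otherwise $(x_i, m_{ij}, x_j)$ is a rainbow $3$-AP. Every step of the argument would leverage this constraint together with the observation that odd-sum pairs can often be ``repaired'' by shifting a representative within its color class.

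First I would try to prove the stronger inductive statement that any exact $r$-coloring of an interval of length $n$ with no rainbow $3$-AP forces $n \geq 3^{r - C}$. Given such a coloring, I would peel off a color class whose representative $x_i$ lies near one end of the interval, and use the midpoint constraint to argue that the remaining colors are forced into a sub-interval of length at most roughly $n/3$. Iterating this peeling yields the $\log_3$ bound up to an additive constant.

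To extract the factor $3$ rather than $2$, I would introduce an auxiliary $2$-coloring on pairs of representatives: each same-parity pair $(i,j)$ is labeled according to whether $c(m_{ij}) = c(x_i)$ or $c(m_{ij}) = c(x_j)$. A Ramsey- or averaging-type extraction on a long subsequence of representatives should yield a ``monotone'' stretch on which the midpoint consistently inherits the color of (say) the left endpoint. Applying the midpoint constraint across nested triples $(x_i, x_j, x_k)$ within such a monotone stretch should force triple-jumps of the form $x_{i+O(1)} \geq 3\, x_i$, giving the logarithm base $3$.

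The hardest part, I expect, is precisely this jump from $\log_2$ to $\log_3$. The doubling bound falls out directly from the pairwise midpoint constraint, as in Lemma~\ref{LHprop220nolab}, whereas tripling seems to require a genuinely global or triple-wise structural statement about the color-assignment pattern. The cautionary analogue is the Behrend construction mentioned in the abstract: dense $3$-AP-free sets exist, so the obstruction to rainbow $3$-APs cannot be extracted from density alone and one must exploit the specific pairwise constraints. Finally, handling the base of the induction using the exact values in Remark~\ref{RK:obs:values} should pin down the optimal constant $C$ in the conjecture.
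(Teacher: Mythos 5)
This statement is Conjecture~\ref{RMconj} in the paper, and the authors explicitly leave it \emph{open}: the paper proves only the weaker upper bound $\aw([n],3)\le\lceil\log_2 n\rceil+1$ (Lemma~\ref{LHprop220nolab}) together with the matching-base lower bound $\aw([n],3)\ge\lceil\log_3 n\rceil+2$ (Lemma~\ref{awn3log3lowerbnd}), and conjectures that the base-$3$ bound is the truth up to an additive constant. So there is no proof in the paper to compare against, and what you have written is not a proof either but an outline of a plan, which you candidly flag as incomplete at exactly the crucial step.

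The gap is real, and I want to pin down two concrete reasons the outlined route is unlikely to close it as stated. First, the ``Ramsey- or averaging-type extraction'' step loses exponentially. If you start with $r$ representatives and invoke Ramsey's theorem on the auxiliary two-coloring of pairs to find a subsequence on which the midpoint consistently inherits the left color, you get a monochromatic stretch of length only about $\log r$. Even if a tripling estimate held along that stretch, the resulting bound would be $n\gtrsim 3^{\log r}=r^{\log 3}$, i.e.\ $r\lesssim n^{1/\log 3}$, which is vastly weaker than the target $r\le\log_3 n+O(1)$; it is even weaker than the known $\log_2$ bound. Second, and independently, the claimed implication ``monotone left-inheriting stretch $\Rightarrow$ $x_{i+O(1)}\ge 3x_i$'' is asserted but not argued; the midpoint constraint $c((x_i+x_j)/2)\in\{c(x_i),c(x_j)\}$ directly yields only the doubling $b_{i+1}\ge 2b_i$ of Lemma~\ref{RK:lem:powersoftwo}, and nothing in the monotone-stretch structure you describe visibly upgrades this to a factor of $3$. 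Your instinct that the jump from base $2$ to base $3$ needs a genuinely new, global structural idea matches the state of the problem: it remains open in the paper, with only the computational evidence of Remark~\ref{RK:obs:values} and the exact-power-of-three guess in Conjecture~\ref{LHaw3tothem3} to support it.
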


The behavior of $\aw([n],k)$ is, however, different for $k\geq 4$. Instead of logarithmic, it is almost linear:
\begin{thm}\label{RM2}  For $k\ge 4$,
\[
ne^{-O\left(\sqrt{\log n}\right)}<\aw([n],k)\leq ne^{-\log\log\log n-\omega(1)}.
\]
\end{thm}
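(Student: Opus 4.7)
For the lower bound $\aw([n],k) > ne^{-O(\sqrt{\log n})}$, I would build a coloring with no rainbow $4$-AP (which suffices, since any rainbow $k$-AP with $k \geq 4$ contains a rainbow $4$-AP on its first four terms). The key observation is that the four three-element subsets of the position set $\{0,1,2,3\}$ of a 4-AP $a, a+d, a+2d, a+3d$ impose just three linear relations:
\[
\{0,1,2\},\ \{1,2,3\}:\ 2y = x+z, \qquad \{0,1,3\}:\ 3y = 2x+z, \qquad \{0,2,3\}:\ 3y = x+2z.
\]
My goal is therefore a single set $A \subseteq [n]$ of size $|A| = ne^{-O(\sqrt{\log n})}$ containing no solution in three distinct elements to any of these equations. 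Each equation has the shape $\alpha x + \beta y + \gamma z = 0$ with $\alpha,\gamma > 0$, $\beta < 0$, $\alpha + \beta + \gamma = 0$, and $\max(|\alpha|,|\beta|,|\gamma|) \leq 3$, so Behrend's high-dimensional sphere construction adapts uniformly: take integers whose base-$m$ digits lie in $\{0, 1, \ldots, \lfloor m/3 \rfloor - 1\}$ (to suppress carries even under the coefficient $3$) with a fixed, popular squared-digit-sum. A no-carry analysis forces any putative solution to hold digit-wise, after which Jensen's inequality combined with the sphere condition forces $x = y = z$; optimizing $m$ yields the stated size.

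Given such an $A$, I would take the coloring $c \colon [n] \to \{0, 1, \ldots, |A|\}$ that assigns each element of $A$ its own positive color and every element of $[n] \setminus A$ the color $0$. In any rainbow $4$-AP at most one term carries color $0$, so at least three terms lie in $A$; those three terms occupy one of the four three-subsets of $\{0,1,2,3\}$ and hence satisfy one of the forbidden equations in $A$, a contradiction. This coloring uses $|A|+1$ colors, so $\aw([n],k) > |A| = ne^{-O(\sqrt{\log n})}$ for every $k \geq 4$.

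For the upper bound $\aw([n],k) \leq ne^{-\log\log\log n - \omega(1)}$, I would use the ``one representative per color'' reduction: given an exact $r$-coloring of $[n]$ with no rainbow $k$-AP, pick one element from each color class to form a set $X \subseteq [n]$ with $|X| = r$ elements of pairwise distinct colors. Every $k$-AP contained in $X$ is automatically rainbow, so $X$ must be $k$-AP-free, and a suitable quantitative form of Szemer\'edi's theorem then delivers the claimed bound on $r$.

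\textbf{Main obstacle.} The substantive difficulty is calibrating the Szemer\'edi-type input so that it produces exactly the $o(n/\log\log n)$ threshold claimed. The off-the-shelf Gowers bound for $k$-AP-free subsets gives only $n/(\log\log n)^{c_k}$ for a tiny constant $c_k < 1$, which is too weak. One therefore likely needs either a refined $k$-AP density result, or a two-stage argument: apply the much stronger Sanders--Bloom $3$-AP density bound to $X$ to locate a rainbow $3$-AP, and then extend it to a rainbow $k$-AP by exploiting the many unused colors. Verifying that such an extension can always be carried out is where I expect most of the real work to live.
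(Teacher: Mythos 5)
Your lower bound strategy is essentially the paper's. The paper names the three-element subsets of a $4$-AP \emph{punctured $4$-APs}, observes (as you do) that coloring a punctured-$4$-AP-free set $S\subseteq[n]$ with $|S|$ distinct colors and giving $[n]\setminus S$ one extra color ``zero'' produces an exact $(|S|+1)$-coloring with no rainbow $4$-AP, and then proves that the \emph{standard} Behrend sphere set $S_\ell(m,d)$ already contains no punctured $4$-AP. Where you shrink the digit alphabet to $\{0,\dots,\lfloor m/3\rfloor-1\}$ and dispatch the three equations $2y=x+z$, $3y=2x+z$, $3y=x+2z$ by a uniform no-carry plus strict-convexity argument, the paper keeps Behrend's original digits in $\{0,\dots,d-1\}$ (base $2d-1$) and argues in two steps: since $S_\ell(m,d)$ is $3$-AP-free the missing term of a punctured $4$-AP must be an interior one, say $a_{\vx_2}$; then $\vx_1+\vx_3=2\vx_2$ forces $\|\vx_2\|<\sqrt{\ell}$, and $\vx_2+\vx_4=2\vx_3$ gives $2\sqrt{\ell}=\|\vx_2+\vx_4\|\le\|\vx_2\|+\|\vx_4\|<2\sqrt{\ell}$, a contradiction. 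Both routes produce the same $ne^{-O(\sqrt{\log n})}$ bound; yours is a valid alternative and arguably a touch cleaner in that it does not pass through the $3$-AP-free property as an intermediate lemma.

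For the upper bound the paper does exactly the ``one representative per color class'' reduction you describe and then applies Gowers' density theorem directly; there is no Sanders--Bloom step and no attempt to grow a rainbow $3$-AP into a rainbow $k$-AP (which, as you suspect, would be hard to control). Your misgiving about calibration is in fact pointing at a slip in the paper itself: with $b=2^{-2^{k+9}}<1$, the quantity $n(\log_2\log_2 n)^{-b}$ is \emph{not} $o(n/\log\log n)$, nor is it literally of the form $ne^{-\log\log\log n-\omega(1)}$, since $(1-b)\log\log\log n\to+\infty$ rather than $-\infty$. What Gowers honestly yields, and what the paper's proof actually establishes, is $\aw([n],k)\le n(\log_2\log_2 n)^{-b_k}=n^{1-o(1)}$; the tidier-looking rephrasings in Corollary~\ref{CEthm:upawboundone} and in the theorem statement overstate this. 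So your reduction is exactly right, you do not need a stronger density input, and the ``real work'' you anticipated is not present in the paper -- the asymptotic just needs to be read as $n^{1-o(1)}$ rather than at face value.
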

Theorem~\ref{RM2} is established by Lemma~\ref{CEthm:Behrendbound} and Corollary~\ref{CEthm:upawboundone}.

Finally, we consider arithmetic progressions in the cyclic group $\Z_n$.
\begin{rem}\label{KH:Thm:ZnVSBracketn}{For positive integers $n$ and $k$, $\aw(\Z_n,k) \leq \aw([n],k)$, because every AP in $[n]$ corresponds to an AP in $\Z_n$.   }\end{rem}

However, because progressions in $\Z_n$ may ``wrap around,'' there are additional APs in $\Z_n$, some of which may be rainbow.  Thus it is possible that every coloring of $\Z_n$ with $\aw([n],k)-1$ colors guarantees a rainbow $k$-AP, so strict inequality is possible. As was shown in \cite[Theorem 3.5]{JLMNR03} (and follows from  Theorem~\ref{RM4} below), there are infinitely many values of $n$ for which $\aw(\Z_n,3)=3$, for example, when $n$  is a power of two.

\begin{defn}{Let $n\ge 3$ be an integer. 
Define $f_2(n)$ to be 0 if $n$ is odd and  1 if $n$ is even.  Define $f_3(n)$ to be the number of  odd prime factors $p$ of $n$  that have $\aw(\Z_{p},3)=3$ and  $f_4(n)$ to be the number of  odd prime factors $p$ of $n$  that have $\aw(\Z_{p},3)=4$, both counted according to  multiplicity. }
\end{defn}

\begin{thm}\label{RM4}  
For every prime number $p$, $3\le \aw(\Z_p,3)\le 4$.  For an  integer $n\ge 2$, the value of $\aw(\Z_{n},3)$  is determined by the values of $\aw(\Z_p,3)$ for the prime factors $p$:	
\[\aw(\Z_n,3) = 2+f_2(n)+f_3(n)+2f_4(n).\]
For an  integer $n\ge 2$ having every prime factor  less than $100$,
 $f_4(n)$ is the number of odd prime factors of $n$ in the set $Q_4:=\{17, 31, 41, 43, 73, 89,  97\}$ and $f_3(n)$ is the number of odd prime factors of $n$ in $Q_3$, where $Q_3$ is the set of all odd primes less than $100$ and not in $Q_4$.   
\end{thm}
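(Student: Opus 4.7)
The plan is to establish the three assertions of the theorem in order: the bounds on $\aw(\Z_p,3)$ for primes $p$, the general formula for $\aw(\Z_n,3)$, and the enumeration of primes under $100$.

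For the first assertion, the lower bound $\aw(\Z_p,3)\ge 3$ is immediate since any rainbow 3-AP requires three distinct colors. For the upper bound $\aw(\Z_p,3)\le 4$, the cases $p=2,3$ follow from $|\Z_p|\le 3$. For an odd prime $p\ge 5$, I would exploit that $\Z_p$ is a field: for any pair of distinct elements $a,c$ there is a unique midpoint $b=(a+c)/2$ yielding a 3-AP $\{a,b,c\}$. Starting from an exact 4-coloring with nonempty classes $C_1,\ldots,C_4$, I would fix representatives $a_i\in C_i$ and examine the midpoints $(a_i+a_j)/2$ for distinct $i,j$; to avoid a rainbow 3-AP with endpoints $a_i,a_j$ the midpoint must be colored $i$ or $j$. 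A short case analysis on the resulting restricted color pattern of the six midpoints, together with the 3-APs obtained by taking some $a_i$ as midpoint and varying endpoints, should force a rainbow 3-AP.

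For the general formula, I plan to reduce to prime powers via the Chinese Remainder Theorem and handle prime powers inductively. The two key reductions I would establish are: (i) if $\gcd(a,b)=1$ and $a,b\ge 2$, then $\aw(\Z_{ab},3)=\aw(\Z_a,3)+\aw(\Z_b,3)-2$; and (ii) for an odd prime $p$ and $k\ge 1$, $\aw(\Z_{p^{k+1}},3)=\aw(\Z_{p^k},3)+(\aw(\Z_p,3)-2)$. Combined with the known value $\aw(\Z_{2^m},3)=3$ from \cite{JLMNR03}, an induction on the number of prime factors counted with multiplicity then produces the stated formula. For each reduction, the lower bound direction (exhibiting an extremal coloring with no rainbow 3-AP) would be proven by pulling back an extremal coloring of the smaller group through the natural quotient $\Z_n\twoheadrightarrow\Z_{n/p}$ and then refining one fiber using an extremal coloring of a subgroup isomorphic to $\Z_p$. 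The upper bound direction would take an exact coloring of $\Z_n$ with the stated number of colors and no rainbow 3-AP, and either push it through the quotient or restrict it to a suitable subgroup isomorphic to $\Z_p$, in each case producing a coloring with strictly fewer colors that contradicts the inductive hypothesis. The main obstacle is making these push-down and restriction arguments work cleanly: one must ensure that enough distinct colors survive the reduction to apply induction, and the anomalous behavior of $p=2$ (where adding further factors of $2$ does not increase $\aw$) must be handled separately, consistent with the fact that $f_2(n)$ only detects the parity of $n$.

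For the final assertion, the task is essentially computational: for each odd prime $p<100$, determine whether $\aw(\Z_p,3)=3$ or $4$. For each $p\in Q_4=\{17,31,41,43,73,89,97\}$ I would exhibit an explicit exact 3-coloring of $\Z_p$ with no rainbow 3-AP, certifying $\aw(\Z_p,3)\ge 4$ and hence equality by the first assertion. For each odd prime $p<100$ not in $Q_4$ I would verify by finite case analysis that every exact 3-coloring of $\Z_p$ admits a rainbow 3-AP, giving $\aw(\Z_p,3)=3$. Since only 3-colorings of a group of order less than $100$ need to be examined, the search space is modest; the real content is pinpointing the extremal colorings for the primes in $Q_4$ and confirming that no such colorings exist for the other small primes.
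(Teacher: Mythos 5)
Your proposal has two genuine gaps, both located precisely where the theorem's technical content lies. First, the bound $\aw(\Z_p,3)\le 4$ is not as elementary as your midpoint sketch suggests. Constraining the six midpoints of four fixed color representatives does not force a rainbow $3$-AP: that constraint system touches at most ten elements of $\Z_p$, says nothing about the rest, and does not preclude degenerate coincidences among representatives and midpoints. The paper's Proposition~\ref{newsingleton} obtains this bound by invoking Theorem~\ref{JLMNR3.2} from \cite{JLMNR03}, which asserts that every $3$-coloring of odd $\Z_n$ whose color classes all have size at least $n/q+1$ ($q$ the least prime factor) contains a rainbow $3$-AP. Regrouping the color classes of a hypothetical rainbow-free exact coloring with $\ge 4$ colors into three blocks in all possible ways then forces all but one class to be a singleton, and only with that singleton structure in hand does a finite parity-and-midpoint argument on the three singleton positions close the case. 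That external structural theorem is indispensable here, and nothing in your direct case analysis substitutes for it.

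Second, your reductions (i) and (ii) are plausible but the plan to prove them is inadequate in both directions. The lower-bound construction (``pull back an extremal coloring through the quotient and refine one fiber'') silently requires that the extremal coloring of the factor you split off have a \emph{singleton} color class so that the refined fiber can merge with the pullback at exactly one shared color; the paper isolates this as the explicit hypothesis of Proposition~\ref{NW:singleton} and relies on Proposition~\ref{newsingleton} to guarantee it for every prime, but you never mention singletons and without them the color count does not come out right. The upper-bound step (``push the coloring through the quotient or restrict to a subgroup to get fewer colors'') is not well defined: a coloring of $\Z_n$ does not induce a coloring on a quotient, and restriction to a subgroup can lose arbitrarily many colors. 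The paper instead introduces residue palettes $P_i$ modulo the odd factor, proves (Proposition~\ref{biga0}, then Propositions~\ref{two}, \ref{MYDSone}, \ref{upperpdoddt}, \ref{upperpd2mt}) that rainbow-free colorings sharply constrain how these palettes overlap, and constructs a derived coloring of the smaller group whose rainbow $3$-AP lifts back. Your CRT-plus-prime-power-tower decomposition is a legitimate alternative to the paper's ``strip off $2^m$, then one odd prime at a time'' recursion (which deliberately avoids CRT and works even when $s$ and $n/s$ are not coprime), and either would give the same formula once the two inequalities are in hand — but those inequalities are exactly what your proposal does not supply. The third assertion is computational and your treatment of it matches the paper's.
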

Theorem \ref{RM4} is established by 
Proposition \ref{newsingleton}, 
Corollary~\ref{LH:Zn3primefactor3} and Proposition \ref{LH:Zn3primeval}.

For $k\geq 4$, the bounds we obtain for $\aw(\Z_n,k)$ are the same as those for $\aw([n],k)$:
\begin{thm}\label{RM5}  For $k\ge 4$,
\[
    ne^{-O\left(\sqrt{\log n}\right)}< \aw(\Z_n,k) \leq ne^{-\log\log\log n - \omega(1)}.
\]
\end{thm}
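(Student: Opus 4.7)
The upper bound is essentially free: by Remark~\ref{KH:Thm:ZnVSBracketn} we have $\aw(\Z_n,k)\leq \aw([n],k)$, and the upper bound portion of Theorem~\ref{RM2} (supplied by Corollary~\ref{CEthm:upawboundone}) then gives $\aw(\Z_n,k)\leq ne^{-\log\log\log n-\omega(1)}$.

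For the lower bound, my plan is to mirror the construction behind Lemma~\ref{CEthm:Behrendbound}, but take extra care so that the Behrend-type set used has no wrap-around issues in $\Z_n$. Set $m=\lfloor n/(2k)\rfloor$ and let $B\subseteq\{1,2,\ldots,m\}$ be a Behrend sphere set in a large base $M$ with digit bound small enough (digits $<M/k$ suffice, with the precise constant tuned to the sub-configurations that arise) that no $(k-1)$-element subset of any $k$-AP in $[n]$ lies entirely in $B$. This is the same sphere-plus-no-carry argument that drives Lemma~\ref{CEthm:Behrendbound}: the strengthened digit restriction forces the digit-vector identity for each ``3 out of $k$'' relation and then Cauchy--Schwarz collapses the configuration. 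The stronger digit bound only shrinks $|B|$ by a constant factor in the exponent, so $|B|=ne^{-O(\sqrt{\log n})}$. Now view $B\subseteq\Z_n$ and color $\Z_n$ with $|B|+1$ colors by assigning each element of $B$ its own private color and giving every element of $\Z_n\setminus B$ one common color.

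Next I would verify that no $k$-AP in $\Z_n$ is rainbow under this coloring. A rainbow $k$-AP must have at most one element outside $B$ (otherwise two elements share the common color), so at least $k-1\geq 3$ of its elements lie in $B\subseteq\{1,\ldots,m\}$ with $m\leq n/(2k)$. Because $B$ is confined to an interval of length at most $n/(2k)$, any two of those $k-1$ elements are at AP-indices differing by at most $k-2$ and at values differing by less than $n/(2k)$ in $\Z_n$, which forces the step $d$ (mod $n$) to satisfy either $d\leq m-1$ or $d\geq n-m+1$. In either case, possibly after reversing the direction of the AP, the $k$-AP is a genuine $k$-AP in $[n]$ with no wrap-around. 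We are now back in the $[n]$ setting, where the defining property of $B$ forbids $\geq k-1$ of its elements from lying in any $k$-AP. This contradicts rainbowness, so $\aw(\Z_n,k)>|B|+1\geq ne^{-O(\sqrt{\log n})}$.

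The main technical obstacle is disposing of wrap-around APs that have no analogue in $[n]$; concentrating $B$ in a short initial segment of $\Z_n$ reduces this entirely to the $[n]$-analysis already done in Lemma~\ref{CEthm:Behrendbound}, at the cost of only a constant factor in $|B|$ that is absorbed into the $O(\sqrt{\log n})$ exponent. Everything else is a routine combination of Remark~\ref{KH:Thm:ZnVSBracketn}, Theorem~\ref{RM2}, and the Behrend-type construction.
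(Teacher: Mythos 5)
Your proposal is correct in substance and follows the same broad strategy as the paper: place a Behrend‐type set $B$ in a short initial segment of $\Z_n$, color $B$ rainbow and the rest with a single new color, and show that wrap‐around $k$-APs cannot be rainbow because any surviving sub‐configuration of $B$ already lives in the $[n]$ setting. This is exactly what Lemma~\ref{KH:Thm:BehrendtypelbZn} and Proposition~\ref{KH:Lem:punc4APZn} do, with the interval $[\lfloor n/c\rfloor]$ for a fixed $c>3$ rather than your $k$-dependent $n/(2k)$ (either choice only perturbs the $e^{-O(\sqrt{\log n})}$ exponent since $k$ is fixed). The main difference is that you overshoot: you ask for a strengthened digit bound so that $B$ contains no $(k-1)$-element subset of any $k$-AP. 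No modification is needed. The paper observes (Propositions~\ref{lma:behrend3ap} and~\ref{lma:nopunctured4ap}) that the \emph{standard} Behrend sphere set $S_\ell(m,d)$ already contains no $3$-AP and no punctured $4$-AP, and then handles only $k=4$ in Proposition~\ref{KH:Lem:punc4APZn}; a rainbow $k$-AP with $k\ge 4$ contains a rainbow $4$-AP, so no further property of $B$ is needed. (Your stronger property is equivalent anyway: among the first four positions of a $k$-AP with one position deleted, the three or four survivors always contain a $3$-AP or a punctured $4$-AP.) Finally, the claim that after reversing the step the whole $k$-AP becomes a ``genuine $k$-AP in $[n]$ with no wrap-around'' is not literally true — the one element colored zero may lie outside the interval or wrap — but this does not matter; what you need, and what the paper proves by a careful integer-versus-residue comparison (using $c>3$ so that $3n/c<n$), is that the surviving elements of $B$ themselves form a punctured $4$-AP as integers. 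So the proposal works, but the paper's route via Proposition~\ref{KH:Lem:punc4APZn} is cleaner and avoids both the unverified digit‐restriction claim and the unnecessary direct analysis of $k$-APs for $k>4$.
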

Theorem \ref{RM5} is established by 
Remark \ref{KH:Thm:ZnVSBracketn} and  Lemma~\ref{KH:Thm:BehrendtypelbZn}.

The structure of the paper is as follows: Section \ref{sec:awnk} presents results pertaining to $\aw([n],k)$, with Theorem \ref{RM1} proved in Section \ref{ssec:awn3} and Theorem \ref{RM2} proved in Section \ref{ssec:awnk}. Results pertaining to $\aw(\Z_n,k)$ appear in Section \ref{sec:awZnk}, with Theorem \ref{RM4} proved in Section \ref{ssec:awZn3} and Theorem \ref{RM5} proved in Section \ref{ssec:awZnk}.  Section \ref{sec:compute} describes the methods and algorithms used to compute values of $\aw([n],k)$ and $\aw(\Z_n,k)$, while Section \ref{sec:conclude} contains conjectures and open questions for future research.

In the remainder of this section we establish a basic but necessary observation that $\aw(S,\cdot)$ is monotone in $k$.

\begin{obs}\label{thm:awnmonok} 
Let $G$ be an additive (abelian) group such as the integers or the integers modulo $n$,  let $S$ be a finite nonempty subset of $G$, and let  $k\ge 3$ be an integer. Then $\aw(S,k)\leq \aw(S,k+1)$.
\end{obs}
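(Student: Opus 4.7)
The plan is to use the definition of $\aw$ directly: I want to show that any exact coloring with $\aw(S,k+1)$ colors already forces a rainbow $k$-AP, which immediately gives $\aw(S,k)\le \aw(S,k+1)$.

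First I would treat the main case $|S|\ge k+1$. Set $r:=\aw(S,k+1)$ and fix an arbitrary exact $r$-coloring $c\colon S\to[r]$. By definition of $\aw(S,k+1)$, such a coloring must admit a rainbow $(k+1)$-AP, say $a,a+d,a+2d,\dots,a+kd$, on which $c$ attains $k+1$ distinct values. Then the truncated progression $a,a+d,\dots,a+(k-1)d$ is a $k$-AP on which $c$ attains $k$ distinct values (any $k$ of the $k+1$ distinct values are still distinct), so it is a rainbow $k$-AP. Since the exact $r$-coloring was arbitrary, this shows every exact $r$-coloring of $S$ contains a rainbow $k$-AP, which by definition means $\aw(S,k)\le r=\aw(S,k+1)$.

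Next I would dispatch the degenerate range $|S|<k+1$ using the convention stated in the introduction. If $|S|<k$ then $\aw(S,k)=\aw(S,k+1)=|S|+1$, so the inequality is an equality. If $|S|=k$ then $\aw(S,k+1)=|S|+1=k+1$, and on the other hand $\aw(S,k)\le |S|+1=k+1$ because no surjection $c\colon S\to[|S|+1]$ exists, making the defining condition vacuously satisfied. So the inequality also holds in this boundary case.

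There is no real obstacle here; the only thing to be careful about is the bookkeeping between the tautological definition $\aw(S,k)=|S|+1$ when $|S|<k$ and the substantive case $|S|\ge k+1$. The essential content is the one-line observation that the first $k$ terms of a rainbow $(k+1)$-AP form a rainbow $k$-AP.
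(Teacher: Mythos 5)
Your proof is correct. The paper derives this observation from Proposition~\ref{thm:ezlem} (which is proved by the color-merging construction): one takes an exact $(\aw(S,k)-1)$-coloring with no rainbow $k$-AP, notes that it also has no rainbow $(k+1)$-AP because any rainbow $(k+1)$-AP would contain a rainbow $k$-AP, and applies Proposition~\ref{thm:ezlem} to conclude $\aw(S,k+1)\ge \aw(S,k)$. You instead argue directly from the definition: at $r=\aw(S,k+1)$ every exact $r$-coloring contains a rainbow $(k+1)$-AP, and truncating gives a rainbow $k$-AP, hence $\aw(S,k)\le r$. The two arguments hinge on the identical combinatorial observation (a rainbow $(k+1)$-AP contains a rainbow $k$-AP, in contrapositive or direct form), but yours is more self-contained: it avoids invoking Proposition~\ref{thm:ezlem} and the merging construction entirely, relying only on the meaning of ``smallest $r$ such that \dots.'' The paper's route is a byproduct of having already proved Proposition~\ref{thm:ezlem} for other purposes (it is needed repeatedly to convert a rainbow-free coloring into a lower bound on $\aw$). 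Your handling of the boundary cases $|S|<k$ and $|S|=k$ via the tautological convention is also correct and is the kind of bookkeeping the paper silently elides.
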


Observation~\ref{thm:awnmonok} follows immediately from Proposition~\ref{thm:ezlem} below and was noted
noted by Uherka in {\cite{U13}} for the function $\aw([n],\cdot)$.

\begin{prop}\label{thm:ezlem}
	Let $G$ be an additive (abelian) group such as the integers or the integers modulo $n$,  let $S$ be a finite nonempty subset of $G$, and let  $k\ge 3$ be an integer.   If there is an exact $r$-coloring of $S$ that has no rainbow $k$-AP, then $\aw(S,k) \ge r+1$.
\end{prop}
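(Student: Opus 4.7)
The plan is to prove the contrapositive-style statement by combining the definition of $\aw(S,k)$ with a standard color-merging observation. Fix an exact $r$-coloring $c$ of $S$ that has no rainbow $k$-AP (granted by hypothesis). To show $\aw(S,k)\ge r+1$, it suffices to rule out $\aw(S,k)\le r$; I will do this by producing, for every positive $s\le r$, an exact $s$-coloring of $S$ with no rainbow $k$-AP. Since $\aw(S,k)$ is the least $r^{\ast}$ such that every exact $r^{\ast}$-coloring of $S$ has a rainbow $k$-AP, exhibiting even one exact $s$-coloring that avoids such a progression is enough to conclude $\aw(S,k)\ne s$, and doing so for every $s\le r$ yields $\aw(S,k)>r$.

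The construction is the obvious one. Given $1\le s\le r$, choose any partition of the color set $[r]$ into $s$ nonempty blocks $B_1,\dots,B_s$, and define a new coloring $c'\colon S\to[s]$ by setting $c'(x)=j$ whenever $c(x)\in B_j$. Because each $B_j$ is nonempty and $c$ is surjective onto $[r]$, the map $c'$ is surjective, so $c'$ is an exact $s$-coloring. The key (and only) substantive point is that merging colors can never create a rainbow $k$-AP: if $a,a+d,\dots,a+(k-1)d$ were rainbow under $c'$, then the $k$ values $c'(a+id)$ would be distinct, forcing the values $c(a+id)$ to lie in $k$ distinct blocks and hence be pairwise distinct, so the same progression would already be rainbow under $c$, contradicting our hypothesis on $c$. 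Applying this with $s=r,r-1,\dots,1$ shows that for no $s\le r$ does every exact $s$-coloring contain a rainbow $k$-AP, so $\aw(S,k)\ge r+1$.

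There is essentially no obstacle; the argument is a direct unpacking of the definition of $\aw(S,k)$ plus the elementary fact that coarsening a coloring cannot increase its rainbow structure. The statement is included mainly because this monotonicity in the number of colors is what makes the definition of $\aw(S,k)$ as a "smallest $r$" behave as expected, and it is precisely the mechanism needed to derive Observation~\ref{thm:awnmonok} and related monotonicity facts used throughout the paper.
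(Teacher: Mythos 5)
Your proof is correct and uses essentially the same idea as the paper's: merging color classes cannot create a new rainbow $k$-AP, so a rainbow-free exact $r$-coloring yields rainbow-free exact $s$-colorings for every $s\le r$. The paper merges two colors at a time and iterates, while you merge in one step via an arbitrary partition of $[r]$ into $s$ blocks, but this is only a cosmetic difference.
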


\bpf
Let $c$ be an exact $r$-coloring of $S$ with color set $\{1,\ldots,r\}$ that has no rainbow $k$-AP. We proceed by constructing an exact $(r-1)$-coloring of $S$ with no rainbow $k$-AP. For $x\in S$, define

\[\hat{c}(x)=\left\{
\begin{array}{lll}
c(x) & \mbox{if} & c(x)\in\{1,\ldots,r-2\},\\
r-1 & \mbox{if} & c(x)\in\{r-1,r\}.
\end{array} \right. \]
Note that $\hat{c}$ is an exact $(r-1)$-coloring of $S$. Let $K$ be a $k$-AP in $S$. Since there is no rainbow $k$-AP under $c$ there exists $j,\ell\in K$ such that $c(j)=c(\ell)$. It then follows that $\hat{c}(j)=\hat{c}(\ell)$. Hence $K$ is not rainbow under the coloring $\hat{c}$. By the generality of $K$, $\hat{c}$ is an exact $(r-1)$-coloring of $S$ that has no rainbow $k$-AP.  Repeating this construction we  obtain an exact $(r-i)$-coloring of $S$ with no rainbow $k$-AP for $i\in\{1,2,\ldots,r-1\}$. Therefore $\aw(S,k)\geq r+1$.
\epf


\section{$\aw([n],k)$}\label{sec:awnk}

 In this section we establish properties of $\aw([n],k)$.   Sections 2.1 and 2.2 establish our main results for $\aw([n],3)$ and $\aw([n],k), k\ge 4$, respectively.  Sections 2.3 and 2.4 contain additional results valid for all $k$ and specific to $k=3$, respectively.

In Table~\ref{tab:nallk} we give our calculated values of $\aw([n],k)$ for $k\ge 3$. We have a larger list of known values in the case of $k=3$ that is included in Remark~\ref{RK:obs:values} below; in Table~\ref{tab:nallk} we include  only the values $\aw([n],3)$ for which we have a value for $\aw([n],4)$ so that we may compare them.   We also restrict $n,k\geq 3$, and have stopped with $k= \left\lceil\frac{n}{2}\right\rceil+1$, because $\aw([n],k)=n$ if and only if $k\geq \left\lceil\frac{n}{2}\right\rceil+1$ (Proposition~\ref{LHawnkequalsnthm} below).

The growth rates when $k=3$ and when $k\geq 4$ appear to be different based on data given in Table~\ref{tab:nallk}. 
The upper bound of $\left\lceil\log_2 n\right\rceil+1$ given in Proposition~\ref{LHprop220nolab} for $k=3$ and the lower bound of $n^{1-o(1)}$ in Lemma~\ref{CEthm:Behrendbound} for $k\geq 4$ confirm that the growth rates are indeed radically different. 

\begin{table}[htp]
\centering
{\small
\begin{tabular}[h]{c|ccccccccccccc}
$n\setminus k$ & 3 & 4 & 5 & 6 & 7 & 8 & 9 & 10 & 11 & 12 & 13 & 14 \\\hline
3 & 3\\
4 & 4 \\
5 & 4 & 5\\
6 & 4 & 6 \\
7 & 4 & 6 & 7 \\
8 & 5 & 6 & 8 \\
9 & 4 & 7 & 8 & 9\\
10 & 5 & 8 & 9  & 10 \\
11 & 5 & 8 & 9 & 10 & 11\\
12 & 5 & 8 & 10 & 11 & 12\\
13 & 5 & 8 & 11 &11 & 12 & 13\\
14 & 5 & 8 & 11 & 12 & 13 & 14\\
15 & 5 & 9 & 11 & 13 &14 & 14 & 15\\
16 & 5 & 9 & 12 & 13 & 15 & 15& 16\\
17 & 5 & 9 & 13 & 13& 15 & 16 & 16 & 17\\
18 & 5 & 10 & 14 & 14 & 16 & 17 & 17 & 18 \\
19 & 5 & 10 & 14 & 15 & 17 & 17 & 18 & 18  & 19 \\
20 & 5 & 10 & 14 & 16 & 17 & 18  & 19 & 19 & 20\\
21 & 5 & 11 & 14 & 16 & 17 & 19 & 20 & 20 & 20 & 21 \\
22 & 6 & 12 & 14 & 17 & 18 & 20 & 21 & 21 & 21 & 22\\
23 &  6 & 12 & 14 & 17 & 19 & 20 & 21 & 22 &22 & 22 & 23\\
24 & 6 & 12 & 15 & 18 & 20 & 20 & 22 & 23 & 23 & 23 &24 \\
25 & 6 & 12 & 15 & 19 & 21 & 21 & 23 & 23 & 24 & 24 & 24 & 25  \\
\end{tabular}
}
\caption{\label{tab:nallk}Values of $\aw([n],k)$ for $3 \leq k \leq \frac{n+3}{2}$.}
\end{table}

\subsection{Main results for $\aw([n],3)$}\label{ssec:awn3}

Before we address Theorem~\ref{RM1}, we show a summary of the computed data for this case in Remark~\ref{RK:obs:values} below.

\begin{rem}\label{RK:obs:values}{
The exact values of $\aw([n],3)$ are known from computer computations (described in Section~\ref{sec:compute})  for $n\leq58$, and are recorded here.
\ben
\item {$\aw([n],3)=2$ for $n\in\{1\}$.}
\item
$\aw([n],3)=3$ for $n\in\{2,3\}$.
\item
$\aw([n],3)=4$ for $n\in\{4,\ldots,7\}\cup\{9\}$.
\item
$\aw([n],3)=5$ for $n\in\{8\}\cup\{10,\ldots,21\}\cup\{27\}$.
\item
$\aw([n],3)=6$ for $n\in\{22,\ldots,26\}\cup\{28,\ldots,58\}$.
\een
}\end{rem}

Now we turn to the proof of Theorem~\ref{RM1}, beginning with the lower bound.


\begin{prop}
\label{RK:thm:lwrawboundoneforkequalsthree}
Let $n$ be a positive integer and let  $s\in\{-2,-1,0,1,2\}$. Then $\aw([3n-s],3)\geq\aw([n],3)+1$ provided $n\ge s$.
\end{prop}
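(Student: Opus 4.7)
The plan is to apply Proposition~\ref{thm:ezlem}: setting $r=\aw([n],3)$, fix an exact $(r-1)$-coloring $c:[n]\to[r-1]$ with no rainbow 3-term AP (which exists by definition of $\aw$), and construct from $c$ an exact $r$-coloring $\hat c$ of $[3n-s]$ with no rainbow 3-AP; the conclusion $\aw([3n-s],3)\ge r+1$ is then immediate.

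The construction is a ``blow-up along an arithmetic progression.'' Choose a set $A\subseteq[3n-s]$ of size $n$ that is itself an arithmetic progression of common difference $3$, with the offset chosen based on $s$: for instance take $A=\{1,4,\ldots,3n-2\}$ when $s\in\{0,1,2\}$, $A=\{2,5,\ldots,3n-1\}$ when $s=-1$, and $A=\{3,6,\ldots,3n\}$ when $s=-2$. Let $\phi:[n]\to A$ be the order-preserving linear bijection and define
\[\hat c(j)=\begin{cases}c(\phi^{-1}(j)),&j\in A,\\ r,&j\in[3n-s]\setminus A.\end{cases}\]
Since $c$ is exact on $A$ and the complement $[3n-s]\setminus A$ has size $2n-s\ge 1$---this is precisely where the hypothesis $n\ge s$ is used, and the only binding case is $s=2$, requiring $n\ge 2$---the coloring $\hat c$ uses exactly $r$ colors.

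It remains to show $\hat c$ has no rainbow 3-AP, and the analysis splits on how many elements of a fixed 3-AP $(a,a+d,a+2d)\subseteq[3n-s]$ lie in $A$. If all three are in $A$, then since $\phi$ is linear the preimage $(\phi^{-1}(a),\phi^{-1}(a+d),\phi^{-1}(a+2d))$ is a 3-AP in $[n]$, so not rainbow under $c$. If at most one lies in $A$, at least two share the color $r$, so not rainbow. The remaining case---and the main obstacle---is to rule out 3-APs with exactly two elements in $A$. For this I rely on the AP-closure of $A$ inside $[3n-s]$: for any distinct $a,b\in A$, the three possible third points of a 3-AP through $a,b$, namely $2b-a$, $2a-b$, and (when integral) $(a+b)/2$, all share the residue class of $A$ modulo $3$, and hence each lies in the infinite arithmetic progression containing $A$. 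Consequently each completion either lies in $A$ itself or falls outside $[3n-s]$, and the offset of $A$ was chosen based on $s$ precisely so that any completion whose ``$[n]$-index'' would be $\le 0$ or $\ge n+1$ drops below $1$ or rises past $3n-s$ rather than landing on a non-$A$ element inside. A short boundary check in each of the five cases $s\in\{-2,-1,0,1,2\}$ verifies this, forcing the third element into $A$ and contradicting the ``exactly two'' hypothesis.
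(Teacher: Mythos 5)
Your proof is correct and follows the same approach as the paper: replace $[n]$ by a residue class mod $3$ inside $[3n-s]$, color that residue class using the extremal coloring of $[n]$, color everything else with a single new color $r$, and observe that this kills all rainbow $3$-APs.

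The one place where you make the argument harder than it needs to be is the ``exactly two in $A$'' case, which you frame as the main obstacle and handle via an explicit discussion of the three possible completions $2b-a$, $2a-b$, $(a+b)/2$ together with a boundary check in each of the five values of $s$. This is all correct, but it obscures the clean one-line fact that both your proof and the paper's implicitly rest on: a $3$-AP $(a,a+d,a+2d)$ meets a fixed residue class modulo~$3$ in either $0$, $1$, or $3$ elements, never exactly $2$ --- if $3\mid d$ then all three share a residue, and if $3\nmid d$ then the three elements exhaust all three residues. Since your $A$ is exactly the set of elements in its residue class lying in $[3n-s]$ (the point of the offset choice, as you can verify), the dichotomy ``all three in $A$'' versus ``at most one in $A$'' is immediate and the boundary check is unnecessary. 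The paper phrases precisely this as ``either the three terms in $K+s$ are all divisible by $3$ or at least two of the terms in $K+s$ are not divisible by $3$.'' Your bookkeeping for exactness (complement of size $2n-s\ge 1$, which is where $n\ge s$ bites for $s=2$) matches the paper's hypothesis correctly.
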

\bpf
Let $r=\aw([n],3)$ and $s\in\{0,1,2\}$. We construct an exact $r$-coloring of $[3n-s]$ that does not contain a rainbow $3$-AP. By definition there exists an exact $(r-1)$-coloring, denoted $c$, of $[n]$ such that there is no rainbow $3$-AP in $[n]$.
Color $[3n-s]$ in the following manner: If $i+s$ is divisible by 3, then $\hat c(i)=c((i+s)/3)$, otherwise $\hat c(i)=r$.
Consider a $3$-AP, $K$, in $[3n-s]$. Then either the three terms in $K+s$ are all divisible by 3 or at least two of the terms in $K+s$ are not divisible by 3. If all terms in $K+s$ are divisible by 3, then $K$ is not rainbow under $\hat c$, since there is no rainbow 3-AP under $c$. If two terms of $K+s$ are not divisible by 3 then those two terms are both colored $r$ and $K$ is not rainbow.  Hence $\aw([3n-s],3)\geq r+1$ for $s\in\{0,1,2\}$.

For $s\in\{-2,-1\}$,  use the same coloring as for $s=0$.
\epf

Using Proposition~\ref{RK:thm:lwrawboundoneforkequalsthree}, we establish the lower bound in Theorem~\ref{RM1}.

\begin{lem}\label{awn3log3lowerbnd}
Let $n$ be a positive integer. Then $\aw([n],3)\geq\left\lceil\log_3n\right\rceil+2$.
\end{lem}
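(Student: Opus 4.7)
The plan is to prove the lower bound by induction on $n$, with Proposition~\ref{RK:thm:lwrawboundoneforkequalsthree} supplying the inductive step. For the base cases I would handle $n \in \{1,2,3\}$ directly: in $[1]$ and $[2]$ no $3$-AP exists, so by the convention $\aw([n],3) = n+1 \in \{2,3\}$, and in $[3]$ the only $3$-AP is $\{1,2,3\}$ itself so $\aw([3],3) = 3$. In all three cases the value matches $\lceil \log_3 n \rceil + 2$.

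For the inductive step, given $n \geq 4$, I would write $n = 3m - s$ with $m := \lceil n/3 \rceil$ and $s := 3m - n \in \{0,1,2\}$. A short check on the residue $n \bmod 3$ shows $m \geq s$ for all $n \geq 4$, so Proposition~\ref{RK:thm:lwrawboundoneforkequalsthree} applies and the inductive hypothesis yields
\[
\aw([n],3) \;=\; \aw([3m-s],3) \;\geq\; \aw([m],3) + 1 \;\geq\; \lceil \log_3 m \rceil + 3.
\]
To finish, I would observe that $m = \lceil n/3 \rceil \geq n/3$, whence $\log_3 m \geq \log_3 n - 1$, and therefore $\lceil \log_3 m \rceil \geq \lceil \log_3 n - 1 \rceil = \lceil \log_3 n \rceil - 1$. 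Combining this with the displayed inequality gives $\aw([n],3) \geq \lceil \log_3 n \rceil + 2$, closing the induction.

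There is no genuine obstacle once the proposition is available; the lemma is essentially bookkeeping. The two places that merit a moment of care are (i) verifying the side condition $m \geq s$ of Proposition~\ref{RK:thm:lwrawboundoneforkequalsthree} in each residue class of $n$ modulo $3$, and (ii) being sure that the choice $m = \lceil n/3 \rceil$ (as opposed to $\lfloor n/3 \rfloor$) is the one that keeps $\lceil \log_3 m \rceil \geq \lceil \log_3 n \rceil - 1$; taking the floor would, for example, fail when $n = 3^k+1$.
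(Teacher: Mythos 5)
Your proof is correct and follows essentially the same route as the paper's: induction on $n$ with base cases $n\in\{1,2,3\}$, the decomposition $n=3m-s$ with $s\in\{0,1,2\}$, Proposition~\ref{RK:thm:lwrawboundoneforkequalsthree} as the inductive step, and the elementary ceiling identity to close. The only cosmetic difference is that you phrase the final step as $\lceil\log_3 m\rceil\ge\lceil\log_3 n\rceil-1$ while the paper writes $\lceil\log_3 m\rceil+1=\lceil\log_3(3m)\rceil\ge\lceil\log_3 n\rceil$; these are the same observation.
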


\bpf
The proof is by induction.  The cases $n=1,2,3$ are true by inspection. Suppose $n>3$ and that $\aw([m],3)\geq\left\lceil\log_3m\right\rceil+2$ for all $m$ satisfying $1\leq m<n$.
We  show that $\aw([n],3)\geq\left\lceil\log_3n\right\rceil+2$.
First, we write $n=3m-s$, where $s\in\{0,1,2\}$ and $2\leq m<n$.
Then by Proposition~\ref{RK:thm:lwrawboundoneforkequalsthree}, \[\aw([n],3)=\aw([3m-s],3)\geq\aw([m],3)+1\geq\left\lceil\log_3m\right\rceil+2+1=\left\lceil\log_3(3m)\right\rceil+2\geq\left\lceil\log_3n\right\rceil+2.\qedhere\]
\epf


\begin{ex}\label{LHj1}{Induction and the proof of Proposition~\ref{RK:thm:lwrawboundoneforkequalsthree} produce the following exact $(m+1)$-coloring of $[3^m]$ that does not have a rainbow 3-AP: For $x\in[3^m]$ with the prime factorization $x=2^{e_2}3^{e_3}5^{e_5}\cdots p^{e_p}$, $c(x)=m+1-e_3$.   This attains the value in Lemma~\ref{awn3log3lowerbnd}.}
\end{ex}

To complete the proof of Theorem~\ref{RM1}, we establish the upper bound.


\begin{lem}\label{RK:lem:powersoftwo}
Let $c$ be an exact $r$-coloring of $[n]$  that does not have a rainbow $3$-AP. For $i\in[r]$, define $b_i\in[n]$ to be the least $x$ such that the induced coloring on $[x]$ has exactly $i$ colors.  Then for all $i\in[r-1]$,  $b_{i+1}\geq 2b_i$. Furthermore, for any $1\leq i\leq j\leq r$, we have
$b_{j}\geq 2^{j-i}b_i$.
\end{lem}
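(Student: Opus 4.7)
The plan is to prove the inequality $b_{i+1} \geq 2b_i$ by contradiction, exhibiting a rainbow $3$-AP whenever it fails, and then iterate to obtain the general bound.

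First I would record the basic property of the $b_i$: by minimality of $b_i$, the prefix $[b_i-1]$ uses at most (and in fact exactly) $i-1$ colors, so the color $c(b_i)$ is a \emph{new} color that does not appear anywhere in $[b_i-1]$. This is the only structural fact we need about the $b_i$, and it is the foundation for the entire argument.

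Next, suppose toward a contradiction that $b_{i+1} < 2b_i$ for some $i \in [r-1]$. Set $a := 2b_i - b_{i+1}$; then $a$ is a positive integer with $a < b_i < b_{i+1}$, and the triple $(a, b_i, b_{i+1})$ is a $3$-AP in $[n]$ with common difference $b_{i+1} - b_i > 0$. I would then verify that this triple is rainbow: since $c(b_{i+1})$ does not occur anywhere in $[b_{i+1}-1]$, and both $a$ and $b_i$ lie in $[b_{i+1}-1]$, the color $c(b_{i+1})$ differs from both $c(a)$ and $c(b_i)$; similarly, since $c(b_i)$ does not occur in $[b_i-1]$ and $a \in [b_i-1]$, we have $c(a) \neq c(b_i)$. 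Thus all three colors are distinct, contradicting the hypothesis that $c$ has no rainbow $3$-AP. This forces $b_{i+1} \geq 2b_i$.

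For the second assertion, a straightforward induction on $j-i$ suffices: the base case $j=i$ is trivial, and the inductive step reads $b_j \geq 2 b_{j-1} \geq 2 \cdot 2^{j-1-i} b_i = 2^{j-i} b_i$. There is no real obstacle here; the only genuine idea in the proof is the choice of the auxiliary point $a = 2b_i - b_{i+1}$ so that $b_i$ becomes the middle term of a $3$-AP ending at $b_{i+1}$, which forces both $c(b_i)$ and $c(b_{i+1})$ to be ``fresh'' colors unavailable to $a$.
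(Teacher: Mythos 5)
Your proof is correct and follows essentially the same route as the paper's: you contradict $b_{i+1} < 2b_i$ by exhibiting the rainbow $3$-AP $\{2b_i - b_{i+1},\, b_i,\, b_{i+1}\}$ and then iterate. The only difference is that you spell out in detail why that triple is rainbow (via the observation that $c(b_i)$ is fresh on $[b_i-1]$), whereas the paper leaves this verification implicit.
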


\bpf  Observe that $b_1=1$.
Suppose that $b_{i+1}< 2b_i$ for some $i\in[r-1]$. Then $\{2b_i-b_{i+1},b_i,b_{i+1}\}$ is a rainbow $3$-AP. The last statement follows by induction, since $b_i\leq2^{-1}b_{i+1}\leq2^{-2}b_{i+2}\leq\dots\leq
2^{-(j-i)}b_{j}$.
\epf


\begin{lem}\label{LHprop220nolab}
	For $n \ge 9$, $\aw([n],3) \le \left\lceil\log_2 n \right\rceil +1$.
\end{lem}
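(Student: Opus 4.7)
The plan is to apply Lemma~\ref{RK:lem:powersoftwo} and then to handle the remaining equality case, which arises only when $n$ is a power of $2$, via a short direct analysis of the colors on positions $1$--$9$. Suppose for contradiction that there is an exact $r$-coloring $c$ of $[n]$ with no rainbow $3$-AP and with $r \ge \lceil \log_2 n \rceil + 1$. Lemma~\ref{RK:lem:powersoftwo} gives $n \ge b_r \ge 2^{r-1}$, hence $r \le \log_2 n + 1$.

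If $n$ is not a power of $2$ then $\log_2 n < \lceil \log_2 n \rceil$, so $r \le \log_2 n + 1 < \lceil \log_2 n \rceil + 1$, contradicting the hypothesis. The interesting case is $n = 2^m$; since $n \ge 9$ we have $m \ge 4$. Here the lemma alone only yields $r \le m+1$, so we must rule out the equality $r = m+1$. In that case $b_r = n$, and the chain of doubling inequalities in Lemma~\ref{RK:lem:powersoftwo} is tight, forcing $b_i = 2^{i-1}$ for every $i \in [m+1]$. Relabeling so that color $i$ is the one newly introduced at $b_i$, we obtain $c(1) = 1$, $c(2) = 2$, $c(4) = 3$, $c(8) = 4$, together with $c(3), c(5), c(6), c(7) \in \{1,2,3\}$ and $c(9) \in \{1,2,3,4\}$ (since $9 < b_5 = 16$).

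The main obstacle is then to chain the no-rainbow conditions on a small list of $3$-APs among the first nine integers in order to derive a contradiction. Specifically, $\{2,3,4\}$ forces $c(3) = 2$ (else the colors read $2,1,3$); combining $\{1,3,5\}$ and $\{3,4,5\}$ forces $c(5) = 2$; $\{4,6,8\}$ forces $c(6) = 3$; and combining $\{1,4,7\}$ with $\{5,6,7\}$ forces $c(7) = 3$. With these values pinned down, $\{1,5,9\}$ requires $c(9) \in \{1,2\}$ while $\{7,8,9\}$ requires $c(9) \in \{3,4\}$, and these two requirements are incompatible. This yields the desired contradiction and completes the proof. The only genuinely delicate step is verifying that each listed AP uniquely determines its forced color; the other deductions are mechanical once the values of $c(b_1),\ldots,c(b_4)$ have been fixed.
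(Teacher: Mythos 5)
Your proof is correct, and for the power-of-two case it takes a genuinely different route than the paper's. Both arguments start identically: Lemma~\ref{RK:lem:powersoftwo} gives $n \ge b_r \ge 2^{r-1}$, which immediately settles every $n$ that is not a power of $2$. The divergence is in how the tight case $n = 2^m$ is closed. The paper cites the computed values $\aw([16],3)=5$ and $\aw([32],3)=6$ from Remark~\ref{RK:obs:values} to anchor a sharper estimate $b_5 > 16$, and then bootstraps via $2^m \ge b_r \ge 2^{r-5}b_5 > 2^{r-1}$ to force $r \le m$ for $m > 5$. You instead observe that tightness forces $b_i = 2^{i-1}$ for all $i$, pin down $c(1)=1$, $c(2)=2$, $c(4)=3$, $c(8)=4$, and then chain the no-rainbow constraints on the 3-APs $\{2,3,4\}$, $\{1,3,5\}$, $\{3,4,5\}$, $\{4,6,8\}$, $\{1,4,7\}$, $\{5,6,7\}$, $\{1,5,9\}$, $\{7,8,9\}$ to get the incompatibility $c(9) \in \{1,2\} \cap \{3,4\}$. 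I verified each of these forcing steps; they are all valid (and since $n \ge 9$ is a power of $2$ forces $n \ge 16$, i.e., $m \ge 4$, position $9$ exists and $9 < b_5 = 16$ so $c(9) \in \{1,2,3,4\}$ as you claim). What each approach buys: the paper's is shorter on the page but outsources the hard case to computer-verified data, while yours is fully self-contained and elementary, at the cost of a longer hand calculation. One very minor nit: you state $c(3) \in \{1,2,3\}$, but $b_3 = 4$ actually gives the tighter constraint $c(3) \in \{1,2\}$, which you implicitly use when the 3-AP $\{2,3,4\}$ forces $c(3)=2$; the looser statement would not by itself rule out $c(3)=3$ there, so it is worth stating the correct constraint explicitly.
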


\bpf  Suppose $r=\aw([n],3)-1$, so there is an $r$-coloring with no rainbow 3-AP.  Lemma \ref{RK:lem:powersoftwo} implies that $n\ge b_r\ge 2^{r-1}$.  Thus $\aw([n],3) \le \left\lfloor\log_2 n \right\rfloor +2$, which establishes the result for $n$ not a power of 2.  The case $\aw([2^m],3)\le m+1$ follows similarly by using the fact that $\aw([2^m],3) = m +1$ for $m=4$ and $m=5$ (see Remark \ref{RK:obs:values}); $\aw([16],3) =5$ implies $b_5>16=2^4$ for any rainbow-free coloring with $r\ge 5$.  Then for $m>5$, Lemma \ref{RK:lem:powersoftwo} implies an $r$-coloring of $2^m$   has $2^m\ge b_r\ge 2^{r-5}b_5> 2^{r-1}$, so $m\ge r$ and $m+1\ge \aw(2^m,3)$. 
\epf

This completes the proof of Theorem \ref{RM1}.

\subsection{Main results for $\aw([n],k), k\ge 4$}\label{ssec:awnk}

In this section we specialize to the case $k\ge 4$, focusing on   lower and upper  bounds that give $\aw([n],k)=n^{1-o(1)}$.  Lemma~\ref{CEthm:Behrendbound} gives the lower bound and Corollary~\ref{CEthm:upawboundone} gives the upper bound.

Let $\sz(n,k)$ denote the largest size of a set $S \subseteq [n]$ such that $S$ contains no $k$-AP (similar notation was introduced in \cite{gasarch2008finding} in honor of Szemer\'edi~\cite{Sz}).
Determining bounds on $\sz(n,k)$ is a fundamental problem in the study of arithmetic progressions. Behrend \cite{B46}, Gowers  \cite{Gow01}, and others \cite{LL01, R61} have established various bounds on $\sz(n,k)$.
Proposition~\ref{prop:snk} provides a strong link between $\sz(n,k)$ and our anti-van der Waerden numbers, allowing us to use known results on $\sz(n,k)$ to bound $\aw(n,k)$.

\begin{prop}\label{prop:snk}
For all $n > k \geq 3$,
\[
	\sz(n, \lfloor k/2\rfloor) + 1 \leq \aw([n], k) - 1 \leq \sz(n,k).
\]
\end{prop}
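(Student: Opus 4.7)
I would prove the upper bound $\aw([n],k)\le \sz(n,k)+1$ by a simple pigeonhole argument. Given any exact $(\sz(n,k)+1)$-coloring of $[n]$, pick one representative from each color class to form a set $R\subseteq[n]$ of size $\sz(n,k)+1$. Because $|R|$ exceeds the largest $k$-AP-free subset size, $R$ must contain a $k$-AP; and since the representatives are pairwise distinctly colored, this $k$-AP is rainbow. So any such coloring contains a rainbow $k$-AP, yielding $\aw([n],k)-1\le \sz(n,k)$.

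For the lower bound, my plan is to exhibit an exact $(\sz(n,\lfloor k/2\rfloor)+1)$-coloring of $[n]$ with no rainbow $k$-AP and then invoke Proposition~\ref{thm:ezlem}, which gives $\aw([n],k)\ge \sz(n,\lfloor k/2\rfloor)+2$. Let $S\subseteq [n]$ be a maximum $\lfloor k/2\rfloor$-AP-free subset, with $|S|=\sz(n,\lfloor k/2\rfloor)$. Assign each element of $S$ its own color and paint all elements of $[n]\setminus S$ with one additional color. Since $n>k$ forces $[n]$ to contain a $\lfloor k/2\rfloor$-AP (the set $\{1,2,\ldots,\lfloor k/2\rfloor\}$ suffices), $S$ is a proper subset of $[n]$, so the coloring is exact with $\sz(n,\lfloor k/2\rfloor)+1$ colors. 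The degenerate case $k=3$ reduces the lower bound to the trivial $\aw([n],3)\ge 2$ and needs no separate construction.

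The key verification is that no $k$-AP $P=\{a,a+d,\ldots,a+(k-1)d\}$ is rainbow. If two elements of $P$ lie outside $S$, they share the extra color, so $P$ is not rainbow. Otherwise at least $k-1$ of the indices $i\in\{0,\ldots,k-1\}$ satisfy $a+id\in S$. The main step, and the only nontrivial ingredient, is the observation that every subset $T\subseteq\{0,\ldots,k-1\}$ with $|T|\ge k-1$ contains $\lfloor k/2\rfloor$ consecutive integers: if $T=\{0,\ldots,k-1\}$, take the initial segment $\{0,\ldots,\lfloor k/2\rfloor-1\}$; if a single index $j$ is missing, one of the two flanking blocks $\{0,\ldots,j-1\}$ and $\{j+1,\ldots,k-1\}$ has size at least $\lceil k/2\rceil\ge \lfloor k/2\rfloor$. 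Such a run of consecutive indices gives a $\lfloor k/2\rfloor$-AP with common difference $d$ entirely inside $S$, contradicting the defining property of $S$. I expect this little parity bookkeeping---checking that the "gap" of at most one missing index always leaves a half-sized run on one side---to be the only place where any care is needed; everything else is pigeonhole and direct construction.
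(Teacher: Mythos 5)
Your proof is correct and establishes both inequalities, but the lower bound is proved by a genuinely different mechanism than the paper's. The upper bound is the same argument, just phrased in contrapositive form (you argue ``every exact $(\sz(n,k)+1)$-coloring has a rainbow $k$-AP'' while the paper argues ``a rainbow-free exact coloring yields a $k$-AP-free set of representatives''). For the lower bound, after reducing to a $k$-AP $\{a, a+d,\dots,a+(k-1)d\}$ with at most one element outside $S$, you take the longer of the two runs of \emph{consecutive} indices flanking the missing one, obtaining a $\lfloor k/2\rfloor$-AP with common difference $d$ inside $S$. The paper instead takes the indices of \emph{opposite parity} to the missing index $i$, which automatically form an AP with common difference $2d$ of length at least $\lfloor k/2\rfloor$ inside $S$; this sidesteps the case analysis and flanking-block arithmetic entirely, since the parity trick omits the missing element for free. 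Both routes deliver the contradiction. One small slip to fix: you assert that the larger flanking block has size at least $\lceil k/2\rceil$, but the two blocks have sizes summing to $k-1$, so the larger is only guaranteed to have size $\lceil (k-1)/2\rceil = \lfloor k/2\rfloor$ (for odd $k$ and $j$ dead center, e.g. $k=5$, $j=2$, both blocks have size $2 < 3 = \lceil 5/2\rceil$). Since you only need a run of length $\lfloor k/2\rfloor$, the argument survives, but the stated bound should be $\lfloor k/2\rfloor$, not $\lceil k/2\rceil$.
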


\begin{proof}
If $c$ is an exact $r$-coloring of $[n]$ that contains no rainbow $k$-AP, then selecting one element of each color class creates a set $S$ that contains no $k$-AP; therefore $\aw([n],k) - 1 \leq \sz(n,k)$.
If $S$ is a set in $[n]$ that contains no $\lfloor k/2\rfloor$-AP, then color $[n]$ by giving each element in $S$ a distinct color and the elements of $[n]\setminus S$ a new color.  
If a $k$-AP $\{a_1, a_2,\dots, a_k\}$ is rainbow in this coloring, then exactly one such $a_i$ is in $[n] \setminus S$.
But this implies that the entries $a_j$ where $j \not\equiv i \pmod 2$ form an AP in $S$ with at least $\lfloor k/2\rfloor$ terms, a contradiction.
\end{proof}

\def\vx{\mathbf{x}}
\def\vy{\mathbf{y}}

\subsubsection{Theorem~\ref{RM2}: Proof of lower bound}

Lemma~\ref{LHprop220nolab} and Behrend's results (stated in Theorem~\ref{lma:behrendsize} and Proposition~\ref{lma:behrend3ap} below) show that the upper bound in Proposition \ref{prop:snk} is not useful for $k=3$.  Observe that when $k\in \{4,5\}$, the lower bound in Proposition \ref{prop:snk} is trivial but is in fact useful in the case of $k\geq 6$.
We provide a similar lower bound for $k \in \{4,5\}$ in Lemma~\ref{CEthm:Behrendbound} by carefully studying Behrend's original construction~\cite{B46} of a relatively large set $S \subset [n]$ that contains no 3-AP, thus giving a lower bound on $\sz(n,3)$.

Let $\{a_1, a_2, a_3, a_4\}$ be a 4-AP with $a_1 \le a_2 \le a_3 \le a_4$.
A set $A \subset \{a_1, a_2, a_3, a_4\}$ of size $|A| = 3$ is called a \emph{punctured 4-AP}.
If such a punctured 4-AP $A$ is not a 3-AP, then it is of the form $A = \{a_1, a_2, a_4\}$ or $A = \{a_1, a_3, a_4\}$. We prove that Behrend's construction in fact contains no punctured 4-AP (Proposition~\ref{lma:punctured4ap} below).
This leads to Lemma~\ref{CEthm:Behrendbound} below.

\begin{lem}\label{CEthm:Behrendbound}
There exists an absolute constant $b > 0$ such that for all $n, k \geq 4$,
\[\aw([n],k) >  n e^{-b\sqrt{\log n}}= n^{1-o(1)}.\]
\end{lem}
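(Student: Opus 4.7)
The plan is to reduce to the case $k=4$ via Observation~\ref{thm:awnmonok} and then exhibit an explicit exact coloring of $[n]$ with many colors and no rainbow $4$-AP, so that Proposition~\ref{thm:ezlem} converts this into a lower bound on $\aw([n],4)$. The ingredient I will invoke is the promised Proposition~\ref{lma:punctured4ap}, treated as a black box: it provides a Behrend-type set $S\subseteq[n]$ of size $|S|\ge n e^{-b\sqrt{\log n}}$ for some absolute constant $b>0$, with the property that $S$ contains neither a $3$-AP nor a punctured $4$-AP.

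Given such an $S$, I would define an exact coloring of $[n]$ with $|S|+1$ colors by assigning a distinct color to each element of $S$ and collecting all of $[n]\setminus S$ into a single remaining color. To verify that no $4$-AP is rainbow, suppose $\{a_1,a_2,a_3,a_4\}$ is a $4$-AP in $[n]$. The shared color can be used at most once among the $a_i$ in any rainbow progression, so at least three of them lie in $S$. If all four lie in $S$, then $\{a_1,a_2,a_3\}$ is already a $3$-AP in $S$; otherwise exactly one $a_i$ lies outside $S$, and the remaining three form a $3$-AP (if $a_1$ or $a_4$ is removed) or a punctured $4$-AP (if $a_2$ or $a_3$ is removed), both contradicting the construction of $S$. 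Proposition~\ref{thm:ezlem} then yields $\aw([n],4)\ge|S|+2> n e^{-b\sqrt{\log n}}$, and Observation~\ref{thm:awnmonok} propagates the bound to every $k\ge 4$.

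The main obstacle is the content hidden in Proposition~\ref{lma:punctured4ap} itself. Behrend's original argument handles $3$-APs by placing lattice points of a common Euclidean norm in $\{0,\ldots,d-1\}^m$ and encoding them as integers via a base-$B$ expansion with $B\ge 2d$; the relation $x+z=2y$ then transfers without carrying to the same relation on vectors, and the strict convexity of the sphere forces $x=y=z$. To extend this to punctured $4$-APs, I would observe that the two patterns correspond to the linear relations $2a_1+a_4=3a_2$ and $a_1+2a_4=3a_3$, both with maximum coefficient $3$, so upgrading the base to $B\ge 3d$ makes these relations transfer to vector identities without carrying. A short Cauchy--Schwarz argument then completes the job: if $3y=2x+z$ with $\|x\|=\|y\|=\|z\|=r$, expanding $\|2x+z\|^2=9r^2$ gives $\langle x,z\rangle=r^2$, which saturates the Cauchy--Schwarz inequality and forces $x=z$, hence $x=y=z$; the same computation handles $3y=x+2z$. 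Inflating the base from $2d$ to $3d$ only affects constants in Behrend's counting argument, so the size estimate $n e^{-O(\sqrt{\log n})}$ survives intact and delivers the claimed lower bound.
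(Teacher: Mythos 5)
Your overall strategy is the paper's: construct a Behrend-type set $S\subseteq[n]$ of size $n^{1-o(1)}$ avoiding punctured $4$-APs, give each element of $S$ its own color and all of $[n]\setminus S$ one further color, verify no $4$-AP is rainbow, and push the bound to all $k\ge 4$ by monotonicity. (One reference slip: what you attribute to Proposition~\ref{lma:punctured4ap} is the lower-bound-from-the-set step, which you in fact re-derive; the set itself comes from Theorem~\ref{lma:behrendsize} together with Propositions~\ref{lma:behrend3ap} and~\ref{lma:nopunctured4ap}.) Where you genuinely diverge is in showing $S$ has no punctured $4$-AP. The paper keeps Behrend's base $2d-1$, treats the excluded term $a_2$ as a digit vector $\vx_2$, and asserts $\vx_1+\vx_3=2\vx_2$ and $\vx_2+\vx_4=2\vx_3$. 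You instead work only with the three surviving terms, use the coefficient-$3$ identities $2a_1+a_4=3a_2$ and $a_1+2a_4=3a_3$, enlarge the base to $B\ge 3d$ so these lift to vector identities without carrying, and finish with the Cauchy--Schwarz equality case.

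This modification is not cosmetic: Proposition~\ref{lma:nopunctured4ap} is actually false as stated with base $2d-1$, because the digit vector of the excluded term can have coordinates up to $2d-2$, so the sums $a_{\vx_2}+a_{\vx_2}$ and $a_{\vx_2}+a_{\vx_4}$ can carry and the asserted vector equations fail. Concretely, take $d=3$, $m=3$, $\ell=5$, base $2d-1=5$: the vectors $(1,2,0)$, $(2,0,1)$, $(0,2,1)$ all lie in $X_5(3,3)$ and encode $11$, $27$, $35$, which together with the missing $19$ (digit vector $(4,3,0)\notin X_5(3,3)$) form the $4$-AP $\{11,19,27,35\}$. Thus $\{11,27,35\}$ is a punctured $4$-AP inside $S_5(3,3)$, and here $\vx_1+\vx_3=(3,2,1)\ne(8,6,0)=2\vx_2$. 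Passing to base at least $3d-2$, as you propose, removes all carries and validates the vector identities; since this only changes the constant in Behrend's pigeonhole count, the $ne^{-O(\sqrt{\log n})}$ estimate and hence the lemma survive. So your route is both different and, as things stand, the one that actually works.
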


The proof of Lemma~\ref{CEthm:Behrendbound} follows from Proposition~\ref{lma:punctured4ap}, Theorem~\ref{lma:behrendsize}, Proposition~\ref{lma:behrend3ap} and Proposition~\ref{lma:nopunctured4ap}, which follow.

\begin{prop}\label{lma:punctured4ap}
Suppose $S \subseteq [n]$ does not contain any punctured 4-APs.
Then $\aw([n], k) > |S|+1$ for all $n\ge k \geq 4$.
\end{prop}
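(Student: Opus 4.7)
The plan is to construct an exact $(|S|+1)$-coloring of $[n]$ that avoids every rainbow $k$-AP and then apply Proposition~\ref{thm:ezlem} to obtain $\aw([n],k) \ge |S|+2$. The coloring $c\colon [n]\to [|S|+1]$ I would use is the natural one: assign each element of $S$ its own private color from $\{1,\dots,|S|\}$, and color every element of $[n]\setminus S$ with the additional color $|S|+1$.

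Before checking the rainbow-free property, I would dispatch the degenerate cases in size of $S$. If $|S|=0$ the claim $\aw([n],k) > 1$ is trivial, and if $|S|=1$ the proposed coloring uses only $2$ colors, so no $k$-AP with $k\ge 4$ can be rainbow. For $|S|\ge 2$, the coloring uses every color (hence is exact) provided $[n]\setminus S \neq \emptyset$, which is automatic: since $n \ge k \ge 4$, the set $[n]$ contains the $4$-AP $\{1,2,3,4\}$ and therefore contains punctured $4$-APs, while $S$ by hypothesis contains none, so $S \subsetneq [n]$.

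The main step is to show that no $k$-AP $P=\{a_1,\dots,a_k\}$ in $[n]$ is rainbow under $c$. Since $|S|+1$ is the only color assigned to more than one element, at most one element of $P$ can lie in $[n]\setminus S$. Consequently, within the $4$-AP $\{a_1,a_2,a_3,a_4\}$ formed by the first four terms of $P$, at least three elements lie in $S$. These three elements form a $3$-subset of a $4$-AP, that is, a punctured $4$-AP, entirely contained in $S$, contradicting the hypothesis on $S$.

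The step I expect to require the most care is simply aligning with the definition of ``punctured $4$-AP'' used in the paper, which includes ordinary $3$-APs (three consecutive terms of a $4$-AP) as well as the two asymmetric shapes $\{a_1,a_2,a_4\}$ and $\{a_1,a_3,a_4\}$. Once that is clear, the argument needs no case analysis on which element (if any) of $P$ falls outside $S$: any three of $a_1,a_2,a_3,a_4$ already form a punctured $4$-AP, making the conclusion immediate.
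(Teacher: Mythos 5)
Your proof is correct and follows essentially the same approach as the paper: assign each element of $S$ a private color, lump $[n]\setminus S$ into one extra color, and note that a rainbow $k$-AP would force three of its first four terms into $S$, producing a forbidden punctured $4$-AP. The only difference is that you are somewhat more careful than the paper about verifying exactness of the coloring and dispatching the degenerate cases $|S|\le 1$, which strengthens the write-up but does not change the argument.
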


\begin{proof}
Color each member of $S$ a distinct color, and color each integer in $[n] \setminus S$ with a new color called {\em zero}.
If there is a rainbow 4-AP in this coloring, then at most one of the elements in this 4-AP is colored zero.
Thus there must be a punctured 4-AP in the other colors, but $S$ contains no punctured 4-AP.
\end{proof}

There is a bijection between vectors $\vx = (x_1,\dots,x_m)^\top \in \Z^m$ where $x_i \in \{0,1,\dots, 2d-2\}$ for all $i \in [m]$ and elements of $\{0,1,\dots,(2d-1)^m-1\}$, by viewing $\vx$ as a $(2d-1)$-ary representation of an integer:
\[
	\vx = (x_1,\dots,x_m)^\top \longleftrightarrow a_{\vx} = \sum_{i=1}^m x_i (2d-1)^{i-1}.
\]
Moreover, observe that if $\vx, \vy \in  \Z^m$ with $x_i, y_i \in \{0,\dots, d-1\}, i=1,\dots,m,$ are associated with $a_\vx, a_\vy \in \{0,1,\dots,(2d-1)^m-1\}$ by this bijection, then $\vx+\vy$ has $x_i+y_i\in\{0,\dots, 2d-1\}$ and $\vx+\vy$ is associated with $a_{\vx+\vy}=a_{\vx}+a_{\vy}\in \{0,1,\dots,(2d-1)^m-1\}$.

Recall that for a vector $\vx\in\R^m$, $||\vx||^2 = \sum_{i=1}^m x_i^2$.  Let $m, \ell, d$ be positive integers and define $X_\ell(m,d)$ to be the set of vectors $\vx = (x_1,\dots,x_m)^\top$ such that
\begin{enumerate}
\item $x_i \in \{0,\dots, d-1\}$ for all $i \in \{1,\dots,m\}$, and
\item $||\vx||^2 =  \ell$.
\end{enumerate}
The set $S_\ell(m,d)$ of integers associated with  the vectors in $X_\ell(m,d)$ via the map $\vx\to a_{\vx}$ forms a subset of integers in $\{0,1,\dots, (2d-1)^m-1\}$.
Behrend~\cite{B46} used the pigeonhole principle to prove the following lemma; here we state the version from \cite{E11}.

\begin{thm}\label{lma:behrendsize}{\rm \cite{B46, E11}}
There exist absolute constants $b, b' > 0$ such that for all $n$ and positive integers $m = m(n)$, $\ell = \ell(n)$, and $d = d(n)$ such that $S_\ell(m,d) \subseteq [n]$ the following inequality holds:
\[|S_\ell(m,d)| \geq \frac{b'n}{2^{ \sqrt{8\log_2 n}}(\log n)^{1/4}} \geq n e^{-b\sqrt{\log n}}.\]
\end{thm}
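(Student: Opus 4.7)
The plan is to carry out Behrend's pigeonhole argument on the sets $X_\ell(m,d)$ and then optimize the parameters $m$ and $d$ as functions of $n$. The overall bookkeeping is routine; the one non-trivial ingredient is a second-moment concentration estimate that yields the factor $(\log n)^{1/4}$ in the denominator rather than the easier $\sqrt{\log n}$.

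First, since every $\vx\in\{0,1,\dots,d-1\}^m$ has $\|\vx\|^2\in\{0,1,\dots,m(d-1)^2\}$, a crude pigeonhole across the at most $md^2$ possible squared norms produces some $\ell\ge 1$ with
\[
|X_\ell(m,d)| \;\ge\; \frac{d^m-1}{md^2}\;\ge\;\frac{d^{m-2}}{2m}.
\]
Second, the map $\vx\mapsto a_\vx$ is injective on $\{0,\dots,d-1\}^m$ because $a_\vx$ is the unique base-$(2d-1)$ representation of an integer in $\{0,1,\dots,(2d-1)^m-1\}$; hence $|S_\ell(m,d)|=|X_\ell(m,d)|$. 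Finally, provided $(2d-1)^m\le n$, a translate of $S_\ell(m,d)$ fits inside $[n]$.

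Third, I would choose $m:=\lfloor\sqrt{2\log_2 n}\rfloor$ and let $d$ be the largest positive integer with $(2d-1)^m\le n$, so that $d\ge \tfrac{1}{2}n^{1/m}-O(1)$. With this choice one has $n^{1/m}=2^{m/2}$ and hence $n^{2/m}=2^m$, which gives
\[
|S_\ell(m,d)| \;\ge\; \frac{d^{m-2}}{2m}\;\ge\;\frac{b' n}{m\cdot 2^m\cdot n^{2/m}}\;=\;\frac{b' n}{\sqrt{\log n}\cdot 2^{\sqrt{8\log_2 n}}}
\]
for an absolute constant $b'>0$. This already implies the second inequality of the theorem: writing $2^{\sqrt{8\log_2 n}}=e^{\sqrt{8\log 2}\,\sqrt{\log n}}$, one absorbs the $\sqrt{\log n}$ into the exponential and obtains $|S_\ell(m,d)|\ge n e^{-b\sqrt{\log n}}$ for a suitable $b>0$.

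The remaining step, and the main obstacle, is to sharpen the denominator from $\sqrt{\log n}$ to $(\log n)^{1/4}$. For this one refines the pigeonhole as follows: if $\vx$ is uniform on $\{0,\dots,d-1\}^m$, then $\|\vx\|^2$ is a sum of $m$ independent copies of $X^2$, where $X$ is uniform on $\{0,\dots,d-1\}$; a direct computation gives $\mathbb{E}[X^2]=\Theta(d^2)$ and $\mathrm{Var}(X^2)=\Theta(d^4)$, so $\|\vx\|^2$ has mean $\Theta(md^2)$ and standard deviation $\Theta(\sqrt m\,d^2)$. Chebyshev's inequality then places at least half of the $d^m$ vectors into a window of only $O(\sqrt m\,d^2)$ integers, so some $\ell$ in this window already satisfies $|X_\ell(m,d)|\ge c\,d^{m-2}/\sqrt m$. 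Re-running the optimization with this improvement replaces the factor $m$ by $\sqrt m$ in the denominator, and since $\sqrt m=\Theta((\log n)^{1/4})$, this produces the claimed first inequality. Everything else is routine computation.
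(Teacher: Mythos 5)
The paper does not prove this theorem; it cites it from Behrend and Elkin, noting only that ``Behrend used the pigeonhole principle to prove the following lemma.'' Your argument is a correct reconstruction of exactly that pigeonhole proof, and your bookkeeping checks out: the choice $m\approx\sqrt{2\log_2 n}$ together with $d$ maximal subject to $(2d-1)^m\le n$ gives $n^{2/m}\approx 2^m$, so $d^{m-2}\approx n/2^{2m}$ and $2m\approx\sqrt{8\log_2 n}$, reproducing the exponential factor. The key refinement you correctly identify is the second-moment (Chebyshev) step: concentrating $\|\vx\|^2$ in a window of $O(\sqrt m\,d^2)$ radii rather than the trivial $O(md^2)$ is precisely what upgrades the naive $\sqrt{\log n}$ denominator to the stated $(\log n)^{1/4}$, since $\sqrt{m}=\Theta((\log n)^{1/4})$. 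Two minor caveats worth making explicit if you write this up fully: you should note that for $m$ sufficiently large the Chebyshev window lies entirely in $\ell\ge 1$ (true since the mean $\Theta(md^2)$ dominates the deviation $\Theta(\sqrt m\,d^2)$), and small values of $n$ are absorbed into the absolute constant $b'$. With those remarks the proof is complete and matches the approach the paper attributes to its sources.
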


The important property of $S_\ell(m,d)$ is that it avoids non-trivial arithmetic progressions.
We include Behrend's simple proof of this fact for completeness.

\begin{prop} {\rm \cite{B46}}\label{lma:behrend3ap}
The set $S_\ell(m,d)$ contains no 3-AP.
\end{prop}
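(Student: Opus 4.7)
The plan is to exploit two structural features of the Behrend set. First, the base--$(2d-1)$ encoding $\vx \mapsto a_{\vx}$ is additive on vectors whose coordinatewise sums stay in $\{0,1,\dots,2d-2\}$, so a 3-AP of integers in $S_\ell(m,d)$ forces an additive identity of the underlying vectors. Second, all vectors in $X_\ell(m,d)$ have the same Euclidean norm $\sqrt{\ell}$, which constrains when such an identity can hold.

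First I would suppose, for contradiction, that $\{a_{\vx}, a_{\vy}, a_{\vz}\}$ is a 3-AP in $S_\ell(m,d)$ with $\vx, \vy, \vz \in X_\ell(m,d)$ and $a_{\vx} < a_{\vy} < a_{\vz}$. Then $a_{\vx} + a_{\vz} = 2 a_{\vy}$. Since the coordinates of $\vx$ and $\vz$ each lie in $\{0,\dots,d-1\}$, the vector $\vx + \vz$ has coordinates in $\{0,\dots,2d-2\}$, and similarly for $2\vy$. By the additivity noted in the paragraph preceding the definition of $X_\ell(m,d)$, we have $a_{\vx+\vz} = a_{\vx} + a_{\vz} = 2a_{\vy} = a_{2\vy}$. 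Because the base--$(2d-1)$ representation is unique on $\{0,\dots,2d-2\}^m$, this yields the vector identity $\vx + \vz = 2\vy$.

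Next I would apply the triangle inequality in $\R^m$. Using $\|\vx\| = \|\vy\| = \|\vz\| = \sqrt{\ell}$, we obtain
\[
2\sqrt{\ell} \;=\; 2\|\vy\| \;=\; \|\vx + \vz\| \;\le\; \|\vx\| + \|\vz\| \;=\; 2\sqrt{\ell},
\]
so equality holds throughout. Equality in the triangle inequality forces $\vx$ and $\vz$ to be nonnegative scalar multiples of one another, and since they have the same (positive) norm, $\vx = \vz$. This then yields $\vy = \vx = \vz$, so $a_{\vx} = a_{\vy} = a_{\vz}$, contradicting the distinctness of the three AP terms. Hence $S_\ell(m,d)$ contains no 3-AP.

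I do not anticipate a serious obstacle here; the only point that deserves care is the step where $a_{\vx+\vz} = a_{2\vy}$ is promoted to $\vx+\vz = 2\vy$, which requires verifying that both sides lie in the range $\{0,\dots,2d-2\}^m$ on which the base--$(2d-1)$ encoding is injective. This is exactly what the bound $x_i, y_i, z_i \le d-1$ guarantees, so the argument goes through cleanly.
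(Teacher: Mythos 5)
Your proposal is correct and follows the same route as the paper's own argument: convert the integer identity $a_{\vx}+a_{\vz}=2a_{\vy}$ into the vector identity $\vx+\vz=2\vy$ using the carry-free property of the base-$(2d-1)$ encoding on $\{0,\dots,d-1\}^m$, then apply the triangle inequality on the sphere of radius $\sqrt{\ell}$ to force $\vx=\vz$, a contradiction. You are somewhat more explicit than the paper about why uniqueness of the base-$(2d-1)$ representation on $\{0,\dots,2d-2\}^m$ justifies the promotion to a vector identity, but this is the same proof.
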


\begin{proof}
Suppose $\{a_{\vx_1}, a_{\vx_2}, a_{\vx_3}\}$ is a 3-AP in $S_\ell(m,d)$.
Let $\vx_1, \vx_2, \vx_3$ be the associated vectors in $X_\ell(m,d)$.
Since $a_{\vx_1} + a_{\vx_3} = 2a_{\vx_2}$, we also have that $\vx_1 + \vx_3 = 2\vx_2$. See Figure~\ref{fig1}.
However, by the triangle inequality, we have that
\[
	2 \sqrt{\ell} = 2||\vx_2|| = || \vx_1 + \vx_3 || \leq || \vx_1|| + ||\vx_3|| = 2\sqrt{\ell},
\]
and equality can only hold if $\mathbf 0$, $\vx_1$, $\vx_3$ and $2\vx_2$ are collinear.
However, since $||\vx_1|| = ||\vx_3||$, this would imply $\vx_1 = \vx_3$ and thus $a_{\vx_1} = a_{\vx_3}$, a contradiction.
\end{proof}

\begin{figure}[htp]
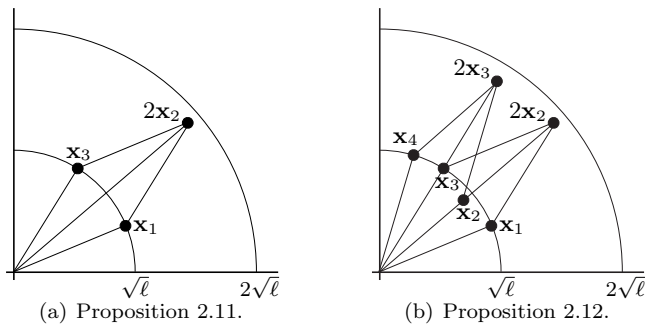

\centering
\mbox{
\subfigure[Proposition~\ref{lma:behrend3ap}.]{
\begin{lpic}[]{"behrend1"(,1.5in)}
\lbl[b]{27,42;\small $\vx_3$}
\lbl[br]{61,55;\small $2\vx_2$}
\lbl[tl]{45,22;\small $\vx_1$}
\lbl[t]{45.6,3.5;\footnotesize $\sqrt{\ell}$}
\lbl[t]{87,3.5;\footnotesize $2\sqrt{\ell}$}
\end{lpic}
}
\qquad
\subfigure[Proposition~\ref{lma:nopunctured4ap}.]{
\begin{lpic}[]{"behrend3"(,1.5in)}
\lbl[b]{14,47;\small $\vx_4$}
\lbl[b]{28,32;\small $\vx_3$}
\lbl[br]{42.5,69.5;\small $2\vx_3$}
\lbl[t]{35,26;\small $\vx_2$}
\lbl[br]{61,55;\small $2\vx_2$}
\lbl[tl]{45,22;\small $\vx_1$}
\lbl[t]{45.6,3.5;\footnotesize $\sqrt{\ell}$}
\lbl[t]{87,3.5;\footnotesize $2\sqrt{\ell}$}
\end{lpic}
}
}
\caption{Proofs of Propositions~\ref{lma:behrend3ap} and~\ref{lma:nopunctured4ap}.}\label{fig1}
\end{figure}

\begin{prop}\label{lma:nopunctured4ap}
The set $S_\ell(m,d)$ contains no punctured 4-AP.
\end{prop}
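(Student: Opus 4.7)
The plan is to imitate the proof of Proposition~\ref{lma:behrend3ap}: reduce to the triangle inequality on $\R^m$ after lifting a scalar identity between $a$-values to a vector identity between the $\vx_i$. Suppose for contradiction that $S_\ell(m,d)$ contains a punctured 4-AP. If those three elements happen to be a 3-AP, then Proposition~\ref{lma:behrend3ap} already delivers a contradiction, so by the two remaining shapes and their symmetry I may assume the set is $\{a_{\vx_1}, a_{\vx_3}, a_{\vx_4}\}$ arising from a 4-AP $a_1, a_2, a_3, a_4$, with $\vx_1, \vx_3, \vx_4 \in X_\ell(m,d)$ (while the missing element $a_2$ is not required to lie in $S_\ell(m,d)$). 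The common-difference relation $a_{\vx_4} - a_{\vx_3} = \tfrac12(a_{\vx_3}-a_{\vx_1})$ rearranges to
\[
a_{\vx_1} + 2\,a_{\vx_4} \;=\; 3\,a_{\vx_3},
\]
which, by linearity of $\mathbf{y} \mapsto a_{\mathbf{y}}$ on $\Z^m$, can be written $a_{\vx_1 + 2\vx_4} = a_{3\vx_3}$.

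The heart of the proof is to upgrade this scalar identity to the vector identity $\vx_1 + 2\vx_4 = 3\vx_3$ in $\Z^m$. Both sides have coordinates in $\{0,\ldots,3(d-1)\}$, which exceeds the standard base-$B$ digit bound $2d-2$ once $d\ge 2$ (where $B := 2d-1$), so the digit-uniqueness trick from Proposition~\ref{lma:behrend3ap} is not directly available. I would set $\mathbf{w} := \vx_1 + 2\vx_4 - 3\vx_3$, so that $|w_r| \le 3(d-1) < 2B$ for every coordinate $r$ and $a_{\mathbf{w}} = 0$; reducing $\sum_r w_r B^{r-1}=0$ modulo successive powers of $B$ shows that any nonzero $\mathbf{w}$ has a very restricted carry pattern. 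The sphere condition $\|\vx_i\|^2 = \ell$ for $i\in\{1,3,4\}$ then enters essentially: expanding $\|\vx_1+2\vx_4\|^2 = 5\ell + 4\langle \vx_1,\vx_4\rangle$ and comparing with $\|3\vx_3\|^2 = 9\ell$, each admissible nonzero carry pattern for $\mathbf{w}$ should be incompatible with the three vectors all sitting on the sphere of radius $\sqrt{\ell}$, forcing $\mathbf{w} = 0$.

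Once $\vx_1 + 2\vx_4 = 3\vx_3$ is in hand, the triangle inequality in $\R^m$ closes the argument:
\[
3\sqrt{\ell} \;=\; \|3\vx_3\| \;=\; \|\vx_1+2\vx_4\| \;\le\; \|\vx_1\| + 2\|\vx_4\| \;=\; 3\sqrt{\ell},
\]
so equality holds and $\vx_1, \vx_4$ are non-negative scalar multiples of one another; with $\|\vx_1\|=\|\vx_4\|=\sqrt{\ell}$ this forces $\vx_1 = \vx_4$, contradicting $a_{\vx_1} \ne a_{\vx_4}$. The main obstacle in the whole plan is the vector identity in the middle paragraph, where base-$B$ carrying and the sphere constraint must be played against one another; the symmetry reduction and the triangle-inequality finish are both routine analogs of the 3-AP proof.
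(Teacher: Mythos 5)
You have correctly identified the genuine obstruction: the lift from the scalar identity $a_{\vx_1} + 2a_{\vx_4} = 3a_{\vx_3}$ to the vector identity $\vx_1 + 2\vx_4 = 3\vx_3$ cannot be justified by digit-uniqueness alone, since the entries of $\vx_1 + 2\vx_4$ can be as large as $3(d-1)$, exceeding the base-$(2d-1)$ digit bound $2d-2$ for every $d \ge 2$. Unfortunately, the carry-pattern-plus-sphere plan you sketch to repair this cannot succeed, because the statement is false as written. Take $m=d=3$ and $\ell=5$: then $X_5(3,3)$ consists of the six permutations of $(0,1,2)$, and $S_5(3,3)=\{7,11,27,35,51,55\}$. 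The triple $\{11,27,35\}$ is a punctured $4$-AP coming from the $4$-AP $\{11,19,27,35\}$ (common difference $8$), and $19\notin S_5(3,3)$. So $S_5(3,3)$ does contain a punctured $4$-AP, and no amount of cleverness in the middle paragraph will prove the contrary.

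For comparison, the paper's own proof contains a gap of exactly the same nature, just positioned earlier. It asserts that $a_{\vx_1}+a_{\vx_3}=2a_{\vx_2}$ implies $\vx_1+\vx_3=2\vx_2$, but here $\vx_2$ is the base-$(2d-1)$ representation of the missing term $a_2$, whose digits may be as large as $2d-2$, so doubling overflows the digit range. In the example above, $\vx_1=(1,2,0)$, $\vx_3=(2,0,1)$, $\vx_2=(4,3,0)$, and $\vx_1+\vx_3=(3,2,1)\ne(8,6,0)=2\vx_2$; in fact $\|\vx_2\|^2=25>5=\ell$, so the claimed bound $\|\vx_2\|<\sqrt{\ell}$ also fails. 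Both arguments can be salvaged by placing the same sphere $X_\ell(m,d)$ in a larger base, say $B\ge 3d-2$: then $\vx_1+2\vx_4$ and $3\vx_3$ have all digits in $\{0,\dots,B-1\}$, your vector identity follows directly from digit-uniqueness, and your triangle-inequality finish is exactly right. (Behrend's construction accommodates this change of base, and the resulting size estimate remains of the form $n\,e^{-O(\sqrt{\log n})}$, so the lower bound on $\aw([n],k)$ still goes through.) Your instinct to be suspicious of the lift was well-founded; the correct diagnosis is not a missing lemma but a missing hypothesis on the base.
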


\begin{proof}
Let $\{a_{\vx_1}, a_{\vx_2}, a_{\vx_3}, a_{\vx_4}\}$ be a 4-AP.
Since $S_\ell(m,d)$ contains no 3-AP, it must be that one of $a_{\vx_2}$ or $a_{\vx_3}$ is not in $S_\ell(m,d)$.
Assume by symmetry that $a_{\vx_2} \notin S_\ell(m,d)$ and $a_{\vx_1}, a_{\vx_3}, a_{\vx_4} \in S_\ell(m,d)$.
Let $\vx_1, \vx_2, \vx_3, \vx_4$ be the associated vectors where $\vx_1, \vx_3, \vx_4 \in X_\ell(m,d)$.

Since $a_{\vx_1} + a_{\vx_3} = 2a_{\vx_2}$, we have $\vx_1 + \vx_3 = 2\vx_2$. See Figure~\ref{fig1}.
However, as in the proof of Proposition~\ref{lma:behrend3ap}, this implies that $|| \vx_2 || < \sqrt{\ell}$.
Since $a_{\vx_2} + a_{\vx_4} = 2a_{\vx_3}$, we have $\vx_2 + \vx_4 = 2\vx_3$.
However, this implies that
\[
	2\sqrt{\ell} = 2||\vx_3|| = ||\vx_2 + \vx_4|| \leq ||\vx_2|| + ||\vx_4|| < 2\sqrt{\ell},
\]
a contradiction.
\end{proof}

Lemma~\ref{CEthm:Behrendbound} now follows by combining Propositions~\ref{lma:punctured4ap} and~\ref{lma:nopunctured4ap}.
It may be possible that the bound in Lemma~\ref{CEthm:Behrendbound} could be improved by using the construction of Elkin~\cite{E11, GW10} that avoids 3-APs using $\frac{b n(\log n)^{1/4}}{2^{\sqrt{8\log_2 n}}}$ elements for some constant $b > 0$. Since this construction avoids a $3$-AP, we can use Proposition~\ref{prop:snk} directly in order to obtain a coloring with no $6$-AP, giving $\aw([n],k) >  \frac{b n(\log n)^{1/4}}{2^{\sqrt{8\log_2 n}}}$ for all $k \geq 6$.
Further use of constructions of Rankin~\cite{R61} or Laba and Lacey~\cite{LL01} of large sets that avoid $k$-APs could slightly improve the asymptotics of $\aw([n],k)$, but these bounds are all of the form $n^{1-o(1)}$.

\subsubsection{Theorem~\ref{RM2}: Proof of upper bound}

A theorem of Gowers, stated here as Theorem \ref{CE:Gowersbound}, provides an upper bound for $\aw([n],k)$. However, $n$ must be very large compared to $k$ for this upper bound to be significantly different than the na\"{\i}ve upper bound of $n$ itself.

\begin{thm}\label{CE:Gowersbound}{\rm \cite[Theorem 1.3]{Gow01}}
For every positive integer $k$ there is a constant $b=b(k)>0$ such that every subset of $[n]$ of size at least $n(\log_2\log_2 n)^{-b}$ contains a $k$-AP. Moreover, $b$ can be taken to be $2^{-2^{k+9}}$\!.	
\end{thm}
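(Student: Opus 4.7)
The plan is to follow Gowers' original density-increment strategy, which extends Roth's argument for $3$-APs to higher $k$ through the Gowers uniformity norms $U^{k-1}$. Fix $k \geq 3$, set $\delta = (\log_2\log_2 n)^{-b}$, and suppose for contradiction that $A \subseteq [n]$ has $|A| \geq \delta n$ but contains no $k$-AP. The target is to produce a \emph{density increment}: an arithmetic progression $P' \subseteq [n]$ of length $N' \geq n^{c_k(\delta)}$ on which $A$ has density at least $\delta + \eta(\delta)$, then to iterate until the density exceeds $1$ (a contradiction) or $P'$ is still long enough to force a $k$-AP by trivial means.

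The first ingredient is the \emph{generalized von Neumann inequality}: writing $f = 1_A - \delta$ for the balanced function, the count of $k$-APs in $A$ agrees with the expected count $\delta^k n^2$ up to an error $O(\|f\|_{U^{k-1}} n^2)$. Under the no-AP assumption this forces $\|f\|_{U^{k-1}} \geq c_k \delta^k =: \eta$, so $f$ is highly non-uniform of order $k-2$.

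The main obstacle, and the deepest ingredient, is the \emph{Gowers inverse theorem} for $U^{k-1}$: if $\|f\|_{U^{k-1}} \geq \eta$, then on some arithmetic progression $P \subseteq [n]$ of length at least $n^{c_k(\eta)}$, $f$ correlates with a polynomial phase $e(\phi(x))$ of degree at most $k-2$. Gowers establishes this by induction on $k$, differencing $f$ via the operators $\Delta_h f(x) = f(x+h)\overline{f(x)}$ at each step and applying Freiman--Ruzsa and Balog--Szemer\'edi--Gowers sumset theorems to extract additive structure (Bohr sets / arithmetic progressions) from the large-value set of $\Delta_h f$. Each inductive step roughly squares the relevant exponents on $\eta$, which is the precise source of the tower-type dependence in $k$; making all of this quantitative is the intricate heart of the argument. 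A final pigeonholing partitions $P$ into sub-APs on which $\phi$ is approximately linear, producing a sub-AP $P'$ on which $A$ has density at least $\delta + c \eta^{O(1)}$.

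Finally I would iterate. Since the density can only climb to $1$, the number of density-increment steps is at most $O(\eta^{-O(1)}) = O(\delta^{-O(k)})$, and each step raises the ambient length to a small power $c_k(\eta)$. Tracking the length shrinkage through these iterations — combined with the tower-type constants from the inverse theorem — one verifies that the iteration can be completed so long as $\log_2\log_2 n$ is large enough in terms of $\delta^{-1}$, and the explicit bookkeeping yields $b = 2^{-2^{k+9}}$. The routine but tedious part is keeping track of all the exponents; the genuinely hard step is the inverse theorem, which is where all the quantitative weakness relative to a purely exponential-type bound originates.
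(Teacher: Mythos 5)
The paper does not prove this statement: it is quoted directly as Theorem 1.3 of Gowers' 2001 paper and invoked as a black box, so there is no internal proof against which to compare your attempt. Your outline of Gowers' argument --- a density-increment iteration driven by the generalized von Neumann inequality and a quantitative local inverse theorem for $\|\cdot\|_{U^{k-1}}$, with the tower-type constant originating in the iterated differencing combined with Freiman-type and Balog--Szemer\'edi--Gowers sumset theorems --- captures the high-level architecture of his proof accurately. But what you have written is a summary, not a proof: each named ingredient (the generalized von Neumann inequality, the local inverse theorem, the sumset results, and the explicit exponent bookkeeping that ultimately yields $b = 2^{-2^{k+9}}$) is itself a substantial theorem, and the bulk of Gowers' roughly 120-page paper is devoted to establishing them with the required uniformity in $n$, $k$, and $\delta$. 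One small but real inaccuracy: in the final density-increment step one passes to sub-progressions on which the polynomial phase $\phi$ is approximately \emph{constant}, not merely linear, so that correlation with $e(\phi)$ converts into excess density of $A$ on a sub-progression. As the present paper treats this result purely as a citation, nothing in your sketch corresponds to a step of the paper; if the intent is to supply an actual self-contained proof, essentially all of the hard content still remains to be written.
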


\begin{cor}\label{CEthm:upawboundone}
Let  $n$ and $k$ be positive integers. Then there exists a constant $b$ such that $\aw([n],k)\leq\left\lceil n(\log_2\log_2 n)^{-b}\right\rceil$.
That is, for a fixed positive integer $k$, the function $\aw([n],k)$ of $n$ is $ o(\frac{n}{\log\log n})$.
\end{cor}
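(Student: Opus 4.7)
The corollary is a direct consequence of combining Proposition~\ref{prop:snk} with Theorem~\ref{CE:Gowersbound}, and my plan is essentially to execute that combination carefully. First I would invoke the upper bound $\aw([n],k) - 1 \leq \sz(n,k)$ from Proposition~\ref{prop:snk}, which reduces the problem to bounding the cardinality of a largest $k$-AP--free subset of $[n]$.

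Next I would apply Theorem~\ref{CE:Gowersbound}: it supplies a constant $b = b(k) > 0$ such that every subset of $[n]$ of size at least $n(\log_2\log_2 n)^{-b}$ contains a $k$-AP. Consequently $\sz(n,k) < n(\log_2\log_2 n)^{-b}$. Because $\sz(n,k)$ is an integer, this strict inequality sharpens to $\sz(n,k) \leq \lceil n(\log_2\log_2 n)^{-b}\rceil - 1$, and chaining with the previous step yields $\aw([n],k) \leq \sz(n,k)+1 \leq \lceil n(\log_2\log_2 n)^{-b}\rceil$, which is the first claim.

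For the asymptotic ``That is'' statement, I would observe that for a fixed positive integer $k$ the constant $b = b(k)>0$ is fixed, so $n(\log_2\log_2 n)^{-b}$ is of smaller order than $n/\log\log n$; the additive $+1$ introduced by the ceiling operation is absorbed as a lower-order term and does not affect the asymptotic order.

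The only genuine ``obstacle'' in the argument, beyond elementary manipulation, is the deep analytic content of Gowers's quantitative Szemer\'edi theorem, which is already packaged as Theorem~\ref{CE:Gowersbound}. Once Gowers's bound is accepted, the derivation of Corollary~\ref{CEthm:upawboundone} is the short combination described above, the only bit of care being the integer-rounding step that passes from the strict inequality $\sz(n,k) < n(\log_2\log_2 n)^{-b}$ to the clean ceiling-bound form claimed in the statement.
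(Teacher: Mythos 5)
Your proof is correct and essentially matches the paper's argument. The only difference is one of bookkeeping: the paper applies Gowers's theorem directly to a coloring by selecting one representative from each color class (so that the rainbow $k$-AP in that transversal is exactly the $k$-AP Gowers guarantees), whereas you route through Proposition~\ref{prop:snk}, which packages that same "one representative per color" observation into the inequality $\aw([n],k)-1 \leq \sz(n,k)$ and then invoke Gowers to bound $\sz(n,k)$. Your integer-rounding step (from $\sz(n,k) < n(\log_2\log_2 n)^{-b}$ to $\sz(n,k) \leq \lceil n(\log_2\log_2 n)^{-b}\rceil - 1$) is sound in both the integer and non-integer cases of $n(\log_2\log_2 n)^{-b}$, and it is a small touch the paper elides by simply noting that an exact $t$-coloring with $t = \lceil n(\log_2\log_2 n)^{-b}\rceil$ already meets Gowers's size threshold. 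In substance the two proofs are the same.
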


\bpf
Consider an exact $t$-coloring of $[n]$, where $t:=\left\lceil n(\log_2\log_2 n)^{-b}\right\rceil$ and $b=2^{-2^{k+9}}$\!. Since the coloring is exact, there exists a set $A\subseteq[n]$ of $t$ differently colored integers. By Theorem \ref{CE:Gowersbound}, $A$ contains a $k$-AP. Therefore $\aw([n],k)\leq t$.
\epf

Note that the upper bound in Corollary~\ref{CEthm:upawboundone} can be expressed as $ne^{-\log\log\log n-\omega(1)}$. Then combining this upper bound on $\aw([n],k)$ and the lower bound from Lemma~\ref{CEthm:Behrendbound}, we have that for $k\geq4$
\[
ne^{-b\sqrt{\log n}}<\aw([n],k)\leq ne^{-\log\log\log n-\omega(1)}.
\]
 This completes the proof of Theorem \ref{RM2}.


\subsection{Additional results for $\aw([n],k)$ valid for all $k$}\label{ssec:awnall}

In this section we present some additional elementary results for $\aw([n],k)$.
The next proposition 
describes a relationship between $\aw([n],k)$ and $\aw([n-1],k)$.

\begin{prop}
	 Let $n$ and $k$ be positive integers.  Then $\aw([n],k) \le  \aw([n-1],k) +1$. \label{thm:awmonepone}
\end{prop}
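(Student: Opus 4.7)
The plan is to start from an extremal coloring of $[n]$ and restrict it to $[n-1]$. Set $r := \aw([n],k) - 1$. By the definition of the anti-van der Waerden number, there exists an exact $r$-coloring $c$ of $[n]$ that contains no rainbow $k$-AP. I would then consider the restriction $c' := c|_{[n-1]}$.

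The restriction $c'$ still has no rainbow $k$-AP, because every $k$-AP in $[n-1]$ is also a $k$-AP in $[n]$. The coloring $c'$ uses either $r$ colors (if the color $c(n)$ is also used on some element of $[n-1]$) or exactly $r-1$ colors (if $c(n)$ is used only on $n$). In both cases $c'$ is an exact coloring of $[n-1]$ using some number $r' \in \{r-1, r\}$ of colors with no rainbow $k$-AP, so by Proposition~\ref{thm:ezlem} applied to $c'$ we get
\[
\aw([n-1],k) \;\ge\; r' + 1 \;\ge\; r \;=\; \aw([n],k) - 1,
\]
which rearranges to the desired inequality $\aw([n],k) \le \aw([n-1],k) + 1$.

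There is no real obstacle here; the only thing to be slightly careful about is that restricting an exact $r$-coloring to a subset might drop the number of colors used (by exactly one, in this case, since we delete a single element), and Proposition~\ref{thm:ezlem} handles both possibilities uniformly. Edge cases such as $n \le k$ are also fine: when $|S| < k$ the tautological value $\aw(S,k) = |S|+1$ makes the inequality $(n+1) \le n + 1$ trivial, and the argument above goes through without modification whenever $n-1 \ge 1$.
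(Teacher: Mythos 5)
Your proof is correct and matches the paper's argument essentially verbatim: both restrict an extremal $(\aw([n],k)-1)$-coloring of $[n]$ to $[n-1]$, observe that the restriction uses either the same number of colors or one fewer, and invoke Proposition~\ref{thm:ezlem} to conclude. The only cosmetic difference is that the paper normalizes by putting the color of $n$ last so as to name the two cases explicitly, whereas you fold them into the single inequality $\aw([n-1],k)\ge r'+1\ge r$.
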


\bpf
	Let $r = \aw([n],k)$. Note that  if $n< k$ our result follows from the definition. Suppose $n\ge k$. Then there is some exact $(r-1)$-coloring of $[n]$ that has no rainbow $k$-AP, and without loss of generality $n$ is colored $r-1$.  Consider this  coloring restricted to $[n-1]$.  Then we have two cases:
	\begin{enumerate}
		\item This is an exact $(r-1)$-coloring of $[n-1]$.
		\item The only integer in $[n]$ with the color $r-1$ is $n$, so this is an exact $(r-2)$-coloring of $[n-1]$.
	\end{enumerate}
	
	Note that since $[n]$ had no rainbow $k$-AP in both of our cases we still do not have a rainbow $k$-AP.  So by Proposition \ref{thm:ezlem} we have $\aw([n-1],k) \ge r-1 = \aw([n],k) -1$ and the result follows.
\epf

 In the next proposition 
 we characterize the values of $k$ for which $\aw([n],k)=n$.

\begin{prop}\label{LHawnkequalsnthm}
Let $n$ and $k$ be positive integers with $k\le n$. Then $\aw([n],k)=n$ if and only if $k\geq \left\lceil\frac{n}{2}\right\rceil+1$.
\end{prop}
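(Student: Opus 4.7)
The plan is to reduce the question to the existence of a suitable exact $(n-1)$-coloring, since the upper bound $\aw([n],k)\le n$ is immediate for $k\le n$: the exact $n$-coloring assigns every integer a distinct color, making every $k$-AP automatically rainbow. Hence $\aw([n],k)=n$ is equivalent to the existence of an exact $(n-1)$-coloring of $[n]$ with no rainbow $k$-AP. In any such coloring exactly one color is repeated, on two distinct elements $x,y\in[n]$, and a $k$-AP is rainbow under this coloring if and only if it does not contain both $x$ and $y$. So the question reduces to: for which $n,k$ does there exist a pair $x\neq y$ such that every $k$-AP in $[n]$ contains both $x$ and $y$?

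The key structural observation is that when $k\ge \lceil n/2\rceil+1$, every $k$-AP in $[n]$ has common difference $d=1$. Indeed, a $k$-AP with $d\ge 2$ spans at least $2k-1$ integers, forcing $n\ge 2k-1$ and hence $k\le (n+1)/2\le \lceil n/2\rceil$, contradicting the hypothesis. Thus the $k$-APs are precisely the intervals $[a,a+k-1]$ for $1\le a\le n-k+1$, with common intersection $[n-k+1,k]$ of size $2k-n\ge 2$. Pick any two elements $x,y$ in this intersection, color them identically, and give every other element of $[n]$ its own distinct color; the resulting exact $(n-1)$-coloring admits no rainbow $k$-AP, so $\aw([n],k)\ge n$ and hence $\aw([n],k)=n$.

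Conversely, suppose $k\le\lceil n/2\rceil$ and fix any pair $x<y$ in $[n]$; it suffices to produce a single interval $k$-AP $[a,a+k-1]$ (viewed as a $k$-AP of common difference $1$) that does not contain both. The intervals containing both $x$ and $y$ are exactly those with $y-k+1\le a\le x$; a valid index $a\in[1,n-k+1]$ outside this range fails to exist only when $y\le k$ and $x\ge n-k+1$ hold simultaneously. Combined with $x<y$ this yields $n-k+1\le x<y\le k$, forcing $n+1\le 2k$, i.e., $k>(n+1)/2$. Checking each parity of $n$ shows this contradicts $k\le\lceil n/2\rceil$, so a desired $a$ exists for every pair $\{x,y\}$, every exact $(n-1)$-coloring contains a rainbow $k$-AP, and $\aw([n],k)<n$.

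The argument is elementary; the only mild technicality is the parity case-analysis needed to convert the inequality $k>(n+1)/2$ into a contradiction with $k\le\lceil n/2\rceil$, which constitutes the main (though very minor) obstacle.
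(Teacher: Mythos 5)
Your proof is correct and follows essentially the same route as the paper: both reduce to asking which intervals of length $k$ contain a given pair, with the paper exhibiting the two extremal intervals $\{1,\dots,k\}$ and $\{n-k+1,\dots,n\}$ where you work with the common intersection $[n-k+1,k]$ of all length-$k$ intervals. Two small slips worth tightening: the inequality $(n+1)/2\le\lceil n/2\rceil$ is false when $n$ is even (though $k\le (n+1)/2$ with $k$ an integer does give $k\le\lceil n/2\rceil$), and the chain $n-k+1\le x<y\le k$ in the converse yields the strict inequality $n+1<2k$, which is what your phrase ``i.e., $k>(n+1)/2$'' actually requires.
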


\bpf

Suppose $k\geq \left\lceil\frac{n}{2}\right\rceil+1$. 
We show that $\aw([n],k)>n-1$. 
Color $\left\lceil \frac{n}{2} \right\rceil$ and $\left\lceil \frac{n}{2} \right\rceil+1$ with the same color and all the remaining integers with unique colors. This is an exact $(n-1)$-coloring. Since $k\geq \left\lceil \frac{n}{2} \right\rceil+1$, the integers in any $k$-AP must be consecutive integers, and the values $\left\lceil \frac{n}{2} \right\rceil$ and $\left\lceil \frac{n}{2} \right\rceil+1$ must be contained in any $k$-AP. Hence no $k$-AP is rainbow.

For the converse, suppose $\aw([n],k)=n$. Color $[n]$ with $n-1$ colors such that there is no rainbow $k$-AP.
Therefore exactly one color class has size two and the rest have size one.
Denote the color class of size two as $C=\{n_1,n_2\}$, $n_1<n_2$. Then every $k$-AP contains both $n_1$ and $n_2$, or else we would have a rainbow $k$-AP. Suppose that $k\leq \left\lceil\frac{n}{2}\right\rceil$. Then $\{1,2,\ldots,k\}$ and $\{n-k+1, n-k+2,\ldots,n\}$ are $k$-APs. Note that $\{1,2,\ldots,k\}\subseteq\{1,2,\ldots,\left\lceil\frac{n}{2}\right\rceil\}$ and $\{n-k+1, n-k+2,\ldots,n\}\subseteq \{\left\lfloor\frac{n}{2}\right\rfloor+1, \ldots, n\}$. Then $n_1,n_2\in \{1,2,\ldots,\left\lceil\frac{n}{2}\right\rceil\}\cap \{\left\lfloor\frac{n}{2}\right\rfloor+1, \ldots, n\}$. This intersection is empty or contains one element depending on whether $n$ is even or odd. In both cases, this contradicts the fact that  $n_1\neq n_2$ and $n_1,n_2\in \{1,2,\ldots,\left\lceil\frac{n}{2}\right\rceil\}\cap \{\left\lfloor\frac{n}{2}\right\rfloor+1, \ldots, n\}$. Therefore $k\geq \left\lceil \frac{n}{2}\right\rceil +1$.
\epf

The following upper bound was proved by Uherka  {\cite{U13}}; we include the brief proof for completeness.

\begin{prop}\label{thm:awnsumoverns}{\rm \cite{U13}}
Let $n$, $k$, $n_1$, and $n_2$ be positive integers such that $k\leq n_1\leq n_2\leq n$ and $n_1+n_2=n$. Then $\aw([n],k)\leq \aw([n_1],k)+\aw([n_2],k)-1$.
\end{prop}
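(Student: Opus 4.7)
The plan is a straightforward pigeonhole/splitting argument. Partition $[n]$ into the prefix $A:=\{1,\ldots,n_1\}$ and the suffix $B:=\{n_1+1,\ldots,n\}$. Observe that $B$ is simply a translate of $[n_2]$, and since translation by a constant preserves both $k$-APs and the property of being rainbow, any exact $r_B$-coloring of $B$ corresponds to an exact $r_B$-coloring of $[n_2]$ with identical rainbow-$k$-AP structure.

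Now set $r:=\aw([n_1],k)+\aw([n_2],k)-1$ and let $c$ be an arbitrary exact $r$-coloring of $[n]$; I want to produce a rainbow $k$-AP. Let $r_A$ and $r_B$ denote the number of distinct colors appearing in $A$ and $B$, respectively. Since $c$ is exact on $[n]=A\cup B$, every one of the $r$ colors appears in at least one of the two pieces, so
\[
r_A+r_B\ \ge\ r\ =\ \aw([n_1],k)+\aw([n_2],k)-1.
\]
Consequently at least one of $r_A\ge\aw([n_1],k)$ or $r_B\ge\aw([n_2],k)$ must hold; otherwise we would get $r_A+r_B\le\aw([n_1],k)+\aw([n_2],k)-2$, a contradiction.

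Suppose without loss of generality that $r_A\ge\aw([n_1],k)$. Then restricting $c$ to $A=[n_1]$ (and relabeling the colors by any bijection to $[r_A]$) yields an exact $r_A$-coloring of $[n_1]$. By the contrapositive of Proposition~\ref{thm:ezlem}, if this coloring had no rainbow $k$-AP we would conclude $\aw([n_1],k)\ge r_A+1$, contradicting $r_A\ge\aw([n_1],k)$. Hence the restriction contains a rainbow $k$-AP, which, as a subset of $[n]$, is also rainbow under the original $c$. The case $r_B\ge\aw([n_2],k)$ is handled symmetrically using the translation between $B$ and $[n_2]$. Since $c$ was an arbitrary exact $r$-coloring, every exact $r$-coloring of $[n]$ has a rainbow $k$-AP, giving $\aw([n],k)\le r$, as desired.

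There is no serious obstacle here; the only technicalities are (i) correctly invoking Proposition~\ref{thm:ezlem} to pass from "$r_A$ colors, no rainbow $k$-AP" to a contradiction with the definition of $\aw([n_1],k)$, and (ii) noting explicitly that APs in the translated copy $B$ biject with APs in $[n_2]$, which is why having "enough" colors on $B$ forces a rainbow $k$-AP in $[n]$.
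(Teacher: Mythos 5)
Your proof is correct and follows essentially the same route as the paper: split $[n]$ into the prefix $[n_1]$ and the translated suffix, pigeonhole the $r$ colors over the two pieces, and extract a rainbow $k$-AP from whichever piece has enough colors. Your explicit appeal to Proposition~\ref{thm:ezlem} just spells out the step the paper leaves as ``clearly,'' so the arguments are the same.
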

\bpf
Let $r =  \aw([n_1],k)+\aw([n_2],k)-1$, and consider an arbitrary exact $r$-coloring $c$  of
$[n]$. Let $r_1 = |c([n_1])|$ and $r_2 = |c(\{n_1 + 1,\dots, n_1 + n_2\})|$.  Since $n_1 +n_2 = n$,  $r\le r_1 +r_2$. This implies  that $r_1\ge\aw([n_1],k)$ or $r_2\ge\aw([n_2],k)$. Clearly $r_1\ge\aw([n_1],k)$ implies $c$ has a rainbow $k$-AP.   By translating $c(\{n_1 + 1,\dots, n_1 + n_2\})$ to a coloring on $[n_2]$, we also see that $c$ has a rainbow $k$-AP if $r_2\ge\aw([n_2],k)$.  Thus $\aw([n],k)\leq r= \aw([n_1],k)+\aw([n_2],k)-1$.  
\epf

\subsection{Additional results for $\aw([n],3)$}\label{ssec:awn3more}

In this section we establish  additional  bounds on $\aw([n],3)$ in Propositions \ref{LHj2} and \ref{RK:prop:upper}, and use Proposition \ref{RK:prop:upper} together with Remark~\ref{RK:obs:values}, Proposition~\ref{RK:thm:lwrawboundoneforkequalsthree}, and Lemma~\ref{awn3log3lowerbnd}  to compute (at least) $93$ additional exact values for
$\aw([n],3)$.

\begin{prop}\label{LHj2}
For $n\geq 2$, there exists $m\le \lfloor\frac{n}{2}\rfloor$ such that $\aw([n],3)\leq \aw\left([m],3\right)+1$.
\end{prop}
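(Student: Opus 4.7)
The plan is to combine Lemma~\ref{RK:lem:powersoftwo} with Proposition~\ref{thm:ezlem}. The key observation is that in a rainbow-3-AP-free coloring, the indices $b_i$ at which the $i$-th color first appears grow at least geometrically with ratio $2$; consequently, the index $b_{r-2}$ at which the second-to-last new color first appears lies at or before position $n/2$, and truncating the coloring there yields a rainbow-3-AP-free coloring of a smaller interval with exactly one fewer color.

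More concretely, I would set $r := \aw([n],3)$ and fix an exact $(r-1)$-coloring $c$ of $[n]$ with no rainbow 3-AP (which exists by the definition of $\aw$). Applying Lemma~\ref{RK:lem:powersoftwo} to $c$, define $b_i$ for $i\in[r-1]$ to be the least $x\in[n]$ such that the restriction of $c$ to $[x]$ uses exactly $i$ colors. Since $c$ is surjective onto an $(r-1)$-element color set, $b_{r-1}\le n$, and Lemma~\ref{RK:lem:powersoftwo} gives $b_{r-1}\ge 2b_{r-2}$ whenever $r\ge 3$. Since $n\ge 2$, we indeed have $r\ge 3$.

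Set $m:=b_{r-2}$. Then $m\le b_{r-1}/2\le n/2$, so $m\le\lfloor n/2\rfloor$ because $m$ is an integer. By the minimality in the definition of $b_{r-2}$, the restriction $c|_{[m]}$ is an exact $(r-2)$-coloring of $[m]$, and it contains no rainbow 3-AP since every 3-AP in $[m]$ is also a 3-AP in $[n]$. Proposition~\ref{thm:ezlem} then yields $\aw([m],3)\ge (r-2)+1=r-1$, which rearranges to $\aw([n],3)\le \aw([m],3)+1$ as desired.

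The main thing to be careful about is the small-case check for $r=3$ (i.e.\ $n\in\{2,3\}$), where $m=b_1=1$: one verifies $1\le\lfloor n/2\rfloor$ and $\aw([1],3)=2=r-1$, so the argument still goes through. Beyond that there is no real obstacle; once Lemma~\ref{RK:lem:powersoftwo} is available, the geometric growth of the $b_i$'s forces the halving bound automatically.
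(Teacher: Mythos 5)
Your proof is correct, and it takes a genuinely different (and arguably cleaner) route than the paper's. The paper's argument works with a translated coloring: it picks the shortest window $\{s+1,\dots,s+t\}$ containing all $r-1$ colors, observes that both endpoints of this window carry singleton colors, shows that the positions at which new colors first appear inside the window must all be even (otherwise one gets a rainbow 3-AP anchored at position~$1$), and then restricts to the even-indexed positions to produce an $(r-2)$-coloring of $[\lfloor t/2\rfloor]$. Your argument instead applies Lemma~\ref{RK:lem:powersoftwo} directly to the original coloring: since $b_{r-1}\le n$ and $b_{r-1}\ge 2b_{r-2}$, setting $m=b_{r-2}$ immediately gives $m\le\lfloor n/2\rfloor$, and $c|_{[m]}$ is automatically an exact $(r-2)$-coloring with no rainbow 3-AP, so Proposition~\ref{thm:ezlem} finishes the job. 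Both proofs hinge on the same geometric-growth phenomenon, but yours uses the already-established Lemma~\ref{RK:lem:powersoftwo} as a black box, whereas the paper re-derives a version of that growth inside the windowed coloring (and the windowing buys nothing here, since only the bound $m\le\lfloor n/2\rfloor$ is needed). Your handling of the base case $r=3$ is also correct; one only needs $\aw([n],3)\ge 3$ for $n\ge 2$, which follows, for instance, from Lemma~\ref{awn3log3lowerbnd}. As a historical aside, the acknowledgements indicate that the paper's $\log_2 n$ upper bound was originally derived \emph{from} this proposition before the referee suggested the $b_i$ machinery; you have in effect closed the loop by deriving the proposition \emph{from} that machinery.
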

\bpf
We may assume that $n\geq 3$, since the case $n=2$ follows by inspection.
	Let $r = \aw([n],3)$.  Then there exists an $(r-1)$-coloring, namely $c$, of $[n]$ that has no rainbow 3-AP.   Let $t$ be the length of a shortest consecutive integer sequence in $[n]$ that contains all $r-1$ colors, say the interval is $\{s+1, s+2,\dots, s+ t\}$ for some $s$.  Define $\hat{c}$ to be an $(r-1)$-coloring of $[t] = \{1,2,\dots,t\}$ so that $ \hat{c}(j):=c(s+j) $ for $1 \le j \le t$.
Notice that $\hat{c}(1)$ and $\hat{c}(t)$ cannot be the same color and each must be the only element of its color class, or else we could find a smaller $t$.  Let $\hat{c}(1) = a$ and define $b_i$ to be the smallest element of $[t]$ such that $[b_i]$ has $i+1$ colors for $ 1\le i \le r-2$.  Note that if $b_i$ is odd, i.e., $b_i = 2x+1$, then $\{1,x+1,2x+1\}$ is a rainbow 3-AP.  So the set of even numbers of $[t]$ are colored with exactly $r-2$ colors with no rainbow 3-AP.
Define $m=\lfloor\frac{t}{2}\rfloor\le \lfloor\frac{n}{2}\rfloor $ and consider the coloring $\tilde c$ of $[m]$ induced by the coloring $\hat c$ of the even integers in $[t]$.  The coloring $\tilde c$ uses at least $r-2$ colors and has with no rainbow $3$-AP, so  $\aw([n],3) -1=(r-2)+1 \le \aw\left([m],3\right)$.
\epf

\begin{prop}\label{RK:prop:upper}
Let $m,$ $ n,$ and $\ell$ be positive integers. If $m< n<2^{\ell}(m+1)$, then $\aw([n],3)\leq\aw([m],3)+\ell$.
\end{prop}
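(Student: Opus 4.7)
The plan is to apply Lemma~\ref{RK:lem:powersoftwo} to an arbitrary exact $r$-coloring $c$ of $[n]$ with no rainbow $3$-AP and to split its ``new-color'' positions $b_1<b_2<\cdots<b_r$ at the threshold $m$. It suffices to show $r\le\aw([m],3)+\ell-1$ for any such $c$, since then no exact coloring of $[n]$ with $\aw([m],3)+\ell$ or more colors can avoid a rainbow $3$-AP, giving the desired $\aw([n],3)\le\aw([m],3)+\ell$. Let $r'$ denote the number of colors that $c$ uses on $[m]$. The restriction of $c$ to $[m]$ is itself an exact $r'$-coloring of $[m]$ with no rainbow $3$-AP, so Proposition~\ref{thm:ezlem} forces $r'\le\aw([m],3)-1$.

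If $r'=r$ then $r\le\aw([m],3)-1\le\aw([m],3)+\ell-1$ and we are done. Otherwise $r'<r$, and by the definition of the $b_i$'s the initial segment $[m]$ contains $b_1,\ldots,b_{r'}$ but not $b_{r'+1}$, so $b_{r'+1}\ge m+1$. The doubling chain from Lemma~\ref{RK:lem:powersoftwo} then yields
\[
b_r\;\ge\;2^{r-r'-1}\,b_{r'+1}\;\ge\;2^{r-r'-1}(m+1),
\]
while the hypothesis $m<n<2^{\ell}(m+1)$ guarantees $b_r\le n<2^{\ell}(m+1)$. Comparing these inequalities gives $2^{r-r'-1}<2^{\ell}$, hence $r-r'-1<\ell$ and thus $r\le r'+\ell\le\aw([m],3)+\ell-1$, as required.

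There is really no substantive obstacle here: Lemma~\ref{RK:lem:powersoftwo} already does the combinatorial heavy lifting, and the proof is essentially the observation that only the $b_i$'s lying strictly past $m$ contribute doublings inside $(m,n]$, while the hypothesis $n<2^\ell(m+1)$ caps the number of such doublings at $\ell$. The only care needed is to handle the edge case $r'=r$ separately, and to note the trivial fact $b_r\le n$, which is automatic since $b_r\in[n]$.
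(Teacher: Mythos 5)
Your proof is correct and takes essentially the same approach as the paper's: both rely on Lemma~\ref{RK:lem:powersoftwo} together with the observation that the first position at which a color beyond those on $[m]$ appears must be at least $m+1$. The paper argues by contradiction with the specific index $r-\ell$, while you argue directly via the number $r'$ of colors on $[m]$ and handle the trivial case $r'=r$ separately; the substance is identical.
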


\bpf
Suppose not. Then there exists $m,\ell\geq1$ and $n$ with $m< n<2^{\ell}(m+1)$ such that there is a   coloring $c$ on  $[n]$ using exactly $r=\aw([m],3)+\ell$ colors that does not have a rainbow $3$-AP.  For $i\in[r]$, let $b_i\in[n]$ be the least $x$ such that the induced coloring on $[x]$ has exactly $i$ colors. Since $r-\ell=\aw([m],3)$, we must have $b_{r-\ell}\geq m+1$, since otherwise the induced coloring on $[m]$ contains at least $\aw([m],3)$ colors, which is impossible. Thus by Lemma~\ref{RK:lem:powersoftwo}, $n\geq b_{r}\geq 2^{\ell}b_{r-\ell}
\geq 2^{\ell}(m+1)$,  which contradicts our assumption on $n$.
\epf

\begin{cor}
$\aw([n],3)=7$ for $64\leq n\leq 80$.
\end{cor}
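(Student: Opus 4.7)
The plan is to prove the corollary by matching upper and lower bounds on $\aw([n],3)$ throughout the range $64 \le n \le 80$, with both bounds equal to $7$.

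For the upper bound $\aw([n],3) \le 7$, I would apply Proposition~\ref{RK:prop:upper} just once, with $m = 40$ and $\ell = 1$. By Remark~\ref{RK:obs:values} we have $\aw([40],3) = 6$, and the hypothesis $m < n < 2^{\ell}(m+1)$ reads $40 < n < 82$. So for every $n$ with $41 \le n \le 81$, the proposition yields $\aw([n],3) \le \aw([40],3) + 1 = 7$, which covers the full target range.

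For the lower bound $\aw([n],3) \ge 7$, I would apply Proposition~\ref{RK:thm:lwrawboundoneforkequalsthree} five times, once for each $m \in \{22,23,24,25,26\}$. Each of these satisfies $\aw([m],3) = 6$ by Remark~\ref{RK:obs:values}, so the proposition gives $\aw([3m - s],3) \ge 7$ for all $s \in \{-2,-1,0,1,2\}$, i.e., for all $n$ in the five-element window $\{3m-2, 3m-1, 3m, 3m+1, 3m+2\}$. Listing these windows gives $\{64,\dots,68\}$, $\{67,\dots,71\}$, $\{70,\dots,74\}$, $\{73,\dots,77\}$, and $\{76,\dots,80\}$, whose union is precisely $\{64, 65, \dots, 80\}$.

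There is no real obstacle; this is essentially a bookkeeping argument. The one point meriting care is that $\aw([n],3)$ is \emph{not} monotone in $n$ (for instance, Remark~\ref{RK:obs:values} shows $\aw([8],3) = 5$ while $\aw([9],3) = 4$), so the lower bound cannot simply be inherited from the smallest case $n = 64$; it must be established individually for every $n$ in the range, which is why several overlapping applications of Proposition~\ref{RK:thm:lwrawboundoneforkequalsthree} are needed.
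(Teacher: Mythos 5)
Your proof is correct and takes essentially the same approach as the paper: the lower bound argument via Proposition~\ref{RK:thm:lwrawboundoneforkequalsthree} applied to $m\in\{22,\ldots,26\}$ is identical, and the upper bound uses the same Proposition~\ref{RK:prop:upper}, just with the parameter choice $m=40$, $\ell=1$ instead of the paper's $m=27$, $\ell=2$ (both valid, both yielding $7$ via Remark~\ref{RK:obs:values}).
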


\bpf
Since $\aw([m],3)=6$ for $22\leq m\leq 26$, and $3\cdot22-2=64$ and
$3\cdot26+2=80$, we see that $\aw([n],3)\ge 7$, by Proposition~\ref{RK:thm:lwrawboundoneforkequalsthree}.
Since $27<64\leq n\leq 80<112=4\cdot28$, we have $\aw([n],3)\leq
\aw([27],3)+2=5+2=7$ by Proposition~\ref{RK:prop:upper} and Remark~\ref{RK:obs:values}.
\epf

\begin{cor}
$\aw([n],3)=7$ for $82\leq n \leq 111$.
\end{cor}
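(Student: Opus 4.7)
The plan is to mirror the structure of the preceding corollary, obtaining the lower bound from Proposition~\ref{RK:thm:lwrawboundoneforkequalsthree} together with the values $\aw([m],3)=6$ listed in Remark~\ref{RK:obs:values}, and the upper bound from Proposition~\ref{RK:prop:upper} together with the value $\aw([27],3)=5$.

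For the lower bound, I would show that every $n\in\{82,\ldots,111\}$ can be written as $3m-s$ for some $s\in\{-2,-1,0,1,2\}$ and some $m\in\{28,\ldots,37\}$. Concretely, the intervals $\{3m-2,3m-1,3m,3m+1,3m+2\}$ for $m=28,29,\ldots,37$ are $\{82,\ldots,86\},\{85,\ldots,89\},\ldots,\{109,\ldots,113\}$, which overlap consecutively and cover $\{82,\ldots,113\}\supseteq\{82,\ldots,111\}$. Since Remark~\ref{RK:obs:values} gives $\aw([m],3)=6$ for every $m\in\{28,\ldots,58\}$ and in particular for $m\in\{28,\ldots,37\}$, Proposition~\ref{RK:thm:lwrawboundoneforkequalsthree} yields $\aw([n],3)\geq\aw([m],3)+1=7$.

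For the upper bound, I would apply Proposition~\ref{RK:prop:upper} with $m=27$ and $\ell=2$. By Remark~\ref{RK:obs:values} we have $\aw([27],3)=5$. The hypothesis $m<n<2^{\ell}(m+1)$ becomes $27<n<4\cdot 28=112$, which is satisfied for every $n\in\{82,\ldots,111\}$; hence $\aw([n],3)\leq\aw([27],3)+\ell=5+2=7$. Combined with the lower bound from the previous paragraph, this gives $\aw([n],3)=7$ for $82\leq n\leq 111$.

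There is no real obstacle here; the argument is a direct bookkeeping check that the $n$ range is simultaneously covered by the intervals producing the lower bound from Proposition~\ref{RK:thm:lwrawboundoneforkequalsthree} and by the single interval $27<n<112$ producing the upper bound from Proposition~\ref{RK:prop:upper}. The only thing to verify carefully is that the $m=27$ choice (with $\ell=2$) is tight enough to reach $n=111$ but cannot be pushed further, which is why the stated range stops at $111$.
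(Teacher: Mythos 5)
Your proof is correct, and the upper bound is exactly the paper's argument: Proposition~\ref{RK:prop:upper} with $m=27$, $\ell=2$, and $\aw([27],3)=5$. Your lower bound, however, takes a genuinely different route. You propagate the computed values $\aw([m],3)=6$ for $m\in\{28,\ldots,37\}$ through Proposition~\ref{RK:thm:lwrawboundoneforkequalsthree}, covering $\{82,\ldots,113\}$ by the overlapping intervals $\{3m-2,\ldots,3m+2\}$ — exactly the pattern used for the adjacent corollaries. The paper instead invokes the general logarithmic bound of Lemma~\ref{awn3log3lowerbnd}: since $81=3^4<n\le 111<3^5$, one has $\lceil\log_3 n\rceil=5$, hence $\aw([n],3)\ge 7$. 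The paper's route is lighter — it needs no table of computed values for the lower bound and makes clear why the range starts exactly at $82$ (the first $n$ with $\lceil\log_3 n\rceil\ge 5$) — whereas your route is more mechanical bookkeeping but carries over uniformly to ranges not bounded below by a power of $3$. Both are valid; you've simply chosen the method the paper uses in the neighboring corollaries rather than the streamlined one it uses here.
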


\bpf
Since $27<82\leq n\leq 111<112=4\cdot28$, we have $\aw([n],3)\leq
\aw([27],3)+2=5+2=7$ by Proposition~\ref{RK:prop:upper} and Remark~\ref{RK:obs:values}. Also, since $3^4=81<n\leq111 < 243=3^5$, we have
$4<\log_3 n\leq 5$, so that $\aw([n],3)\geq \left\lceil\log_3n\right\rceil+2=5+2=7$ by Lemma~\ref{awn3log3lowerbnd}.
\epf

\begin{cor}
$\aw([n],3)=8$ for $190\leq n\leq235$.
\end{cor}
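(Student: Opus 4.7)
The plan is to prove the upper and lower bounds separately, following the same template used in the two preceding corollaries.

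For the upper bound $\aw([n],3) \le 8$, I will apply Proposition~\ref{RK:prop:upper} with $m = 58$ and $\ell = 2$. By Remark~\ref{RK:obs:values} we have $\aw([58],3) = 6$, and the hypothesis $m < n < 2^{\ell}(m+1)$ becomes $58 < n < 236 = 4\cdot 59$, which is satisfied for every $n$ with $190 \le n \le 235$. The proposition then yields $\aw([n],3) \le 6 + 2 = 8$.

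For the lower bound $\aw([n],3) \ge 8$, the logarithmic bound from Lemma~\ref{awn3log3lowerbnd} is insufficient, since $\lceil \log_3 n \rceil + 2 = 7$ whenever $82 \le n \le 243$. Instead, I will bootstrap from the preceding corollary, which supplies $\aw([m],3) = 7$ for $64 \le m \le 80$, and then apply Proposition~\ref{RK:thm:lwrawboundoneforkequalsthree}. That proposition lifts the value from $7$ to $8$ for every $n$ that can be written as $n = 3m - s$ with $m \in \{64,\ldots,80\}$ and $s \in \{-2,-1,0,1,2\}$ (the condition $m \ge s$ is trivial). The only arithmetic check needed is that these $n$ exhaust the interval from $190$ to $235$: the left endpoint is realized by $3 \cdot 64 - 2 = 190$ and the right endpoint by $3 \cdot 78 + 1 = 235$ (taking $m=78$, $s=-1$), while as $m$ increases by $1$ the five-element window $\{3m-2, 3m-1, 3m, 3m+1, 3m+2\}$ shifts by only $3$, so consecutive windows overlap in two values and leave no gaps.

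I do not anticipate any serious obstacle: the entire proof reduces to two one-line applications of previously established propositions, supported by a short arithmetic coverage check, mirroring the structure of the immediately preceding corollaries for $\aw([n],3) = 7$.
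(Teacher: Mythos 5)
Your proof is correct and follows essentially the same route as the paper: the upper bound is Proposition~\ref{RK:prop:upper} with $m=58$, $\ell=2$, and the lower bound bootstraps the previous corollary's $\aw([m],3)=7$ for $64\le m\le 80$ through Proposition~\ref{RK:thm:lwrawboundoneforkequalsthree}. The only cosmetic difference is that you stop the coverage check at $235$ while the paper notes it extends to $242$; both are fine since the corollary only claims the range up to $235$.
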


\bpf
Since $\aw([m],3)=7$ for $64\leq m\leq 80$, and  $3\cdot64-2=190$ and
$3\cdot80+2=242$, we see that  $\aw([n],3)\ge 8$ for $190\leq n\leq242$, by Proposition~\ref{RK:thm:lwrawboundoneforkequalsthree}. 
For $58< n<236=2^2\cdot(58+1)$, we see that $\aw([n],3)\leq \aw([58],3)+2=6+2=8$, by Proposition~\ref{RK:prop:upper} and Remark \ref{RK:obs:values}.
\epf

Finally we combine the upper and lower bounds.

\begin{prop}\label{RK8thm}
If\, $3^u<n< 2\cdot 3^{u}+2$, then $u+3\le \aw([n],3)\le \aw([3^u],3)+1$.  If\, $2\cdot 3^{u}+1<n<  4\cdot 3^u+ 4$, 
then $u+3\le \aw([n],3)\le\aw([3^u],3)+2$.
\end{prop}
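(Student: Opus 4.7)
The plan is to verify both halves by invoking the two bounds already proved in Section~2: Lemma~\ref{awn3log3lowerbnd} for the lower bound and Proposition~\ref{RK:prop:upper} for the upper bounds. There is no new combinatorics to do here; the entire proposition is an arithmetic checking of hypotheses.

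First, for the lower bound, observe that under either hypothesis we have $n > 3^u$, so $\log_3 n > u$ and therefore $\lceil \log_3 n \rceil \ge u + 1$. Lemma~\ref{awn3log3lowerbnd} then yields $\aw([n],3) \ge \lceil \log_3 n \rceil + 2 \ge u + 3$, which handles both cases simultaneously.

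Next, for the upper bound in the first case, I would apply Proposition~\ref{RK:prop:upper} with $m = 3^u$ and $\ell = 1$: the hypothesis $3^u < n < 2 \cdot 3^u + 2$ is exactly $m < n < 2(m+1) = 2^{\ell}(m+1)$, so the proposition gives $\aw([n],3) \le \aw([3^u],3) + 1$. For the second case, I would apply the same proposition with $m = 3^u$ and $\ell = 2$: the hypothesis $2 \cdot 3^u + 1 < n < 4 \cdot 3^u + 4$ implies $m < n < 4(m+1) = 2^{\ell}(m+1)$, so $\aw([n],3) \le \aw([3^u],3) + 2$.

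Since both halves reduce to checking that the stated range of $n$ fits the hypotheses of Proposition~\ref{RK:prop:upper} for the appropriate choice of $\ell$, there is no real obstacle; the proof is essentially a one-line invocation of each of the two earlier results. The only thing to be careful about is the slight mismatch between strict and non-strict inequalities in the statement versus the form $m < n < 2^{\ell}(m+1)$ required by Proposition~\ref{RK:prop:upper}, but these align exactly as written.
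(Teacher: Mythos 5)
Your proof is correct and uses the same two ingredients as the paper (Lemma~\ref{awn3log3lowerbnd} for the lower bound, Proposition~\ref{RK:prop:upper} for the upper bounds). The only difference is in the second upper bound, where you apply Proposition~\ref{RK:prop:upper} once with $m = 3^u$, $\ell = 2$, whereas the paper applies it with $m = 2\cdot 3^u + 1$, $\ell = 1$ and then chains through the first case; your single application is a slight streamlining but not a genuinely different route.
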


\bpf The lower bounds follow immediately from Lemma~\ref{awn3log3lowerbnd}, and the first upper bound follows immediately from Proposition~\ref{RK:prop:upper}.  For the second, apply Proposition~\ref{RK:prop:upper} with $m=2\cdot3^u+1$ and $\ell=1$ to obtain  $\aw([n],3)\le\aw([m],3)+1$ and since $3^u<m< 2\cdot 3^{u}+2$, $\aw([m],3)\leq\aw([3^u],3)+1$.
\epf


\section{$\aw(\Z_n,k)$}\label{sec:awZnk}%

 In this section we establish properties of $\aw(\Z_n,k)$.   Sections \ref{ssec:awZn3} and \ref{ssec:awZnk} establish our main results for $\aw(\Z_n,3)$ and $\aw(\Z_n,k), k\ge 4$, respectively.  Section \ref{ssec:awZnkmore} contains additional results.  

  Please note that for $x\in\Z$, we will also use $x$ to denote the equivalence class $\{x+in : i\in\Z\}$ in $\Z_n$.  Because arithmetic progressions may ``wrap around'' in the group $\Z_n$, we call attention to the fact  that we  consider only $k$-APs that include $k$ distinct members of $\Z_n$.
 Naturally, one of our first questions about $\aw(\Z_n,k)$ concerns its relationship with $\aw([n],k)$.  Theorem \ref{JLMNR3.5}(a) below and Lemma~\ref{awn3log3lowerbnd}  show that $\aw(\Z_n,k)$ need not be asymptotic to $\aw([n],k)$ for $k=3$ and $n=2^m$.  However, we do have the simple bound $\aw(\Z_n,k)\le \aw([n],k)$ (already stated in Remark \ref{KH:Thm:ZnVSBracketn}).

\subsection{Main results for $\aw(\Z_n,3)$}\label{ssec:awZn3}

When we turn to the special case $k=3$, many values of $\aw(\Z_n,3)$ can be computed, and new phenomena appear. Our main results in this case are described by Theorem \ref{RM4}, which we establish in this section.

Currently available computational data is given in Table  \ref{tab:znk3}; the row label displays the range of $n$ for which the values of $\aw(\Z_n,3)$ are reported in that row, and the
column heading is the ones digit within this range.
This data led to the discovery of several results established in this section and is used to establish the second statement in Theorem \ref{RM4} that concerns integers  having all prime factors less than one hundred.

\begin{table}[htp]
\centering
{\small
\begin{tabular}[h]{c|cccccccccc}
 & 0 & 1 & 2 & 3 & 4 & 5 & 6 & 7 & 8 & 9 \\
 \hline
 0--9 &  & & & 3 & 3 & 3 & 4 & 3 & 3 & 4\\
10--19 & 4 & 3 & 4 & 3 & 4 & 4 & 3 & 4 & 5 & 3\\
20--29 & 4 & 4 & 4 & 3 & 4 & 4 & 4 & 5 & 4 & 3\\
30--39 & 5 & 4 & 3 & 4 & 5 & 4 & 5& 3 & 4 & 4 \\
40--49 & 4 & 4 & 5 & 4 & 4& 5& 4 &3 & 4& 4\\
50--59 & 5 & 5& 4& 3& 6 & 4& 4& 4& 4& 3\\
60--69 & 5 & 3 & 5 & 5& 3 &4 &5 &3 &5 &4\\
70--79 & 5 & 3& 5& 4& 4& 5& 4& 4& 5& 3\\
80--89 & 4 & 6 & 5 & 3& 5& 5& 5& 4& 4& 4\\
90--99 & 6 & 4& 4& 5& 4& 4& 4 & 4& 5 & 5
\end{tabular}
}
\caption{\label{tab:znk3}Computed values of $\aw(\Z_n,3)$ for $n =3,\dots,99$ (the row label gives the range of $n$ and the
column heading is the ones digit within this range).}
\end{table}

Many odd primes  $p$ have $\aw(\Z_p,3)=3$ (see Table \ref{tab:znk3} above). However, there are several examples of odd primes $p$ for which $\aw(\Z_p,3)=4$. In Example \ref{CH:awZ17} below we exhibit an explicit exact coloring that establishes $\aw(\Z_{17},3)\geq 4$.

\begin{ex}\label{CH:awZ17}{Coloring the elements of $\Z_{17}$ in order as
    \[
         3 \ 1 \ 1 \ 2  \ 1\  2\  2 \ 2 \ 1\  1\  2 \  2  \  2  \ 1 \  2\  1\   1
    \]
is an exact $3$-coloring that does not contain a rainbow $3$-AP.  Computations establish that equality holds and so $\aw(\Z_{17},3)=4$.  }\end{ex}

\begin{defn}{When dealing with a coloring $c$ of $\Z_{st}$,  the  {\it $i^{\it th}$ residue class} modulo $s$ is $R_i:=\{j\in \Z_{st}:j\equiv i\pmod{s}\}$ and the {\it $i^{\it th}$ residue palette} modulo $s$ is $P_i:=\{c(\ell) :\ell\in R_i\}$. For  a positive integer $t$, 
we call the elements of the two residue classes, $R_0$ and $R_1$, modulo $2$ in $\Z_{2t}$ the {\em even numbers} 
and the {\em odd numbers}, respectively.
}
\end{defn}


\subsubsection{Consequences of results in Jungi\'c et al.}\label{ssec:newawZn3}

In this section we state two important results of  Jungi\'c et al. \cite{JLMNR03} and derive implications.  These are  used in  the proof of Theorem \ref{RM4}.  The next result is an equivalent form of Theorem 3.5 in that paper.

\begin{thm}\label{JLMNR3.5}{\rm \cite[Theorem 3.5]{JLMNR03}}
Let $n$ be a positive integer. Then $\aw(\Z_n, 3) = 3$ if and only if one of the following conditions is satisfied:

\bit
\item[a)] 
$n$ is a power of $2$,
\item[b)] 
$n$ is prime and $2$ is a generator of the multiplicative group $\Z_n^\times$,
\item [c)]
$n$ is prime, $\frac{n-1}{2}$ is odd, and the order of 2 in $\Z_n^\times$ is $\frac{n-1}{2}$.
\eit

\end{thm}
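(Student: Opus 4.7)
The plan is to prove the two directions separately. For the sufficiency direction ($\Leftarrow$), I will show that under each of the three conditions, every exact $3$-coloring of $\Z_n$ contains a rainbow $3$-AP. For the necessity direction ($\Rightarrow$), I will argue by contrapositive, constructing an explicit rainbow-free exact $3$-coloring whenever $n$ satisfies none of (a), (b), (c).

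For case (a), $n = 2^m$, I would proceed by induction on $m$, with the small cases $m \le 2$ checked directly (in $\Z_4$ every $3$-subset is a $3$-AP, so any exact $3$-coloring is rainbow). For the inductive step, given an exact $3$-coloring $c$ of $\Z_{2^m}$, restrict to the even subgroup $2\Z_{2^m}$, which is isomorphic to $\Z_{2^{m-1}}$. If all three colors appear there, the inductive hypothesis supplies a rainbow $3$-AP that lifts to $\Z_{2^m}$. Otherwise the missing color is confined to the odd coset; using an element $x$ of that color, together with the structural fact that every $3$-AP in $\Z_{2^m}$ has endpoints of the same parity, a case analysis on how the other two colors are distributed across the two parities produces the required rainbow $3$-AP.

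For cases (b) and (c), $n$ an odd prime, the key tool is the multiplicative action of $2$ on $\Z_n$. Since $n$ is odd, every pair $\{a, c\}$ has a unique midpoint $(a+c)/2$, so a rainbow-free $3$-coloring with classes $A, B, C$ must satisfy the midpoint-closure condition $(X + Y)/2 \subseteq X \cup Y$ for each unordered pair $X, Y \in \{A,B,C\}$. Because multiplication by $2$ sends a $3$-AP $(a,b,c)$ to the $3$-AP $(2a,2b,2c)$, iterating this action produces a $\langle 2\rangle$-orbit of $3$-APs, which interacts with the midpoint-closure condition to constrain the color classes. In case (b), $\langle 2\rangle = \Z_n^\times$ acts transitively on nonzero elements, which together with midpoint-closure forces each nonzero color class to be $\Z_n^\times$-invariant, leaving only $\{0\}$ and $\Z_n^\times$ and so at most two nonempty classes, contradicting exactness. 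In case (c), $\langle 2\rangle$ has index $2$ in $\Z_n^\times$, and $(n-1)/2$ odd ensures $-1 \notin \langle 2\rangle$; a parallel analysis using the two cosets of $\langle 2\rangle$ together with $\{0\}$ produces the same obstruction.

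For the necessity direction, I proceed by cases. If $n$ has a proper divisor $d > 1$ with $\aw(\Z_d, 3) > 3$, I lift a rainbow-free $3$-coloring from $\Z_d$ to $\Z_n$ via the projection $\Z_n \to \Z_d$, using that any $3$-AP in $\Z_n$ projects to a $3$-AP or a shorter progression in $\Z_d$. The remaining cases, such as $n = 2^a p$ for primes $p$ satisfying (b) or (c), or odd prime powers $p^k$ with $k \geq 2$, require ad hoc constructions exploiting the cyclic or product structure of $\Z_n$ (for example, the explicit coloring of $\Z_6$ given by assigning color $A$ to $\{0,2,3,5\}$, color $B$ to $\{4\}$, and color $C$ to $\{1\}$). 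When $n$ is an odd prime failing both (b) and (c), the subgroup $\langle 2\rangle$ has index at least $2$ in $\Z_n^\times$, and either the index is at least $3$ or the index equals $2$ with $-1 \in \langle 2\rangle$; in either case the cosets of $\langle 2\rangle$ together with $\{0\}$ can be merged into three nonempty rainbow-free color classes. The principal obstacle is the sufficiency direction for primes in cases (b) and (c), where the multiplicative rigidity of $\langle 2\rangle$ must be converted into an additive constraint excluding every tripartition, and where the role of $-1 \notin \langle 2\rangle$ crucially distinguishes case (c) from the (index $2$, $(n-1)/2$ even) failing case.
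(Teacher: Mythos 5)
This theorem is not proved in the paper under review; it is quoted verbatim from Jungi\'c et al.\ \cite[Theorem 3.5]{JLMNR03}, so there is no in-paper proof to compare your sketch against. Evaluating your outline on its own terms, the overall architecture (sufficiency by cases (a), (b), (c); necessity by explicit constructions) is sound, but there are two genuine gaps.

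The most serious one is in your treatment of the prime cases (b) and (c). You assert that midpoint-closure together with transitivity of the $\langle 2\rangle$-action ``forces each nonzero color class to be $\Z_n^\times$-invariant.'' That step does not follow: multiplication by $2$ is a bijection of $\Z_n$ taking $3$-APs to $3$-APs, so it permutes rainbow-free colorings, but it gives no reason that any particular color class is closed under multiplication by $2$. What actually makes the argument go is a preliminary structural reduction, absent from your sketch, that in any rainbow-free exact $3$-coloring of $\Z_p$ ($p$ odd prime) some color class must be a singleton (this is the content of \cite[Theorem 3.2]{JLMNR03}, restated here as Theorem~\ref{JLMNR3.2} and exploited in Proposition~\ref{newsingleton}). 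Once you may translate so that the singleton class is $\{0\}$, the non-rainbow conditions on the $3$-APs $\{0,d,2d\}$ and $\{-d,0,d\}$ force $c(d)=c(2d)$ and $c(d)=c(-d)$ for all $d\neq 0$, i.e.\ the two remaining classes are unions of $\langle 2,-1\rangle$-cosets; conditions (b) and (c) are precisely the condition $\langle 2,-1\rangle = \Z_p^\times$, which then yields a contradiction. Without the singleton lemma you have no anchor, and the orbit argument cannot start.

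A second, lesser gap is in case (a). In the inductive step, when only two colors appear on the even subgroup $E\cong\Z_{2^{m-1}}$, finding the rainbow $3$-AP is genuinely delicate: a $3$-AP with endpoints in $E$ has \emph{two} midpoints in $\Z_{2^m}$ differing by $2^{m-1}$, both of the same parity (odd precisely when the endpoint sum is $\equiv 2\pmod 4$), so the ``case analysis on parities'' you invoke must account for this doubled midpoint and for the sub-subcase in which $E$ is monochromatic. The claim is true, but as written it is a placeholder rather than an argument. Finally, for necessity you correctly identify the lifting construction for composite $n$ and the $\langle 2,-1\rangle$-coset construction for primes outside (b) and (c), but the remaining families (e.g.\ $n=2^a p^b$ with $p$ odd, or $n$ a higher odd prime power) are only gestured at; these need explicit colorings or a uniform reduction to a nontrivial divisor $d$ with $\aw(\Z_d,3)\ge 4$, which in turn requires establishing that such a $d$ exists for every such $n$.
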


\begin{thm}\label{JLMNR3.2}{\rm \cite[Theorem 3.2]{JLMNR03}}
Let $n$ be an odd positive integer and $q$ be the smallest prime factor of $n$.  Then every $3$-coloring of $\Z_n$ in which every color class has at least $\frac n q+1$ elements contains a rainbow $3$-AP.
\end{thm}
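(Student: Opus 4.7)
The plan is to argue by contradiction: assume $\Z_n$ has a $3$-coloring with color classes $A, B, C$ each of size at least $\frac{n}{q}+1$ and containing no rainbow $3$-AP. Because $n$ is odd, $2$ is invertible in $\Z_n$, so three distinct elements $a,b,c \in \Z_n$ form a $3$-AP if and only if $a+c = 2b$; disjointness of the color classes forces $a \neq c$ automatically whenever $a \in A$ and $c \in C$. Hence the absence of a rainbow $3$-AP is equivalent to the three sumset conditions
\[
2B \cap (A+C) = \emptyset, \quad 2A \cap (B+C) = \emptyset, \quad 2C \cap (A+B) = \emptyset,
\]
and since $|2X|=|X|$ and $|A|+|B|+|C| = n$, these yield the symmetric upper bounds $|A+C| \leq |A|+|C|$, $|A+B| \leq |A|+|B|$, and $|B+C| \leq |B|+|C|$.

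The next step is to bring in Kneser's theorem in the cyclic group $\Z_n$: for each pair, $|X+Y| \geq |X|+|Y| - |H_{XY}|$, where $H_{XY}$ is the stabilizer of $X+Y$. Every proper subgroup of $\Z_n$ has order dividing $n$, hence of size at most $n/q$; and $H_{XY} = \Z_n$ would force $X+Y = \Z_n$, contradicting the upper bound above since the third color class is nonempty. Thus each stabilizer is a proper (possibly trivial) subgroup of size at most $n/q$.

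The final step is a case analysis on the three stabilizers. In the case where all stabilizers are trivial, Kneser gives $|X+Y| \geq |X|+|Y|-1$, pinning each sumset to within one element of its upper bound; Vosper's theorem then forces each pair of color classes to form arithmetic progressions sharing a common difference. Rescaling, one obtains three intervals partitioning $\Z_n$, and the requirement that $2B$ avoid the interval $A+C$ conflicts with the size hypothesis $|A|,|B|,|C| \geq \frac{n}{q}+1$. If instead some $H_{XY}$ is a nontrivial proper subgroup, one descends to the quotient $\Z_n / H_{XY}$, which is again cyclic with smallest prime factor at least $q$, and inducts on $n$; the base case $n$ prime is handled directly by Cauchy--Davenport combined with Vosper.

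The main obstacle is coordinating the three sumset constraints simultaneously: the stabilizers $H_{AB}, H_{BC}, H_{AC}$ need not be equal, so one must navigate a lattice of subgroup configurations, and the delicate technical step is verifying that the color classes remain sufficiently large after passing to the quotient so that the inductive hypothesis can be invoked.
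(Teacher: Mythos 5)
Note first that the paper does not prove Theorem~\ref{JLMNR3.2} --- it cites it directly from Jungi\'c et al.~\cite{JLMNR03} --- so there is no internal proof to compare against; your proposal must be assessed on its own. Your setup is correct: since $n$ is odd, $2$ is a unit in $\Z_n$, and the absence of a rainbow $3$-AP is equivalent to the disjointness conditions $2B\cap(A+C)=\emptyset$, $2A\cap(B+C)=\emptyset$, $2C\cap(A+B)=\emptyset$, which together with $|A|+|B|+|C|=n$ and $|2X|=|X|$ give $|A+C|\le|A|+|C|$ and its two symmetric analogues. Invoking Kneser's theorem for the lower bound, and ruling out $H_{XY}=\Z_n$ because $|X+Y|<n$, are both sound.

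The gaps appear in the case analysis. Vosper's theorem is a statement about $\Z_p$ for $p$ prime; it does not apply to composite $\Z_n$, and the correct analogue (Kemperman's structure theorem for critical pairs in abelian groups) involves a much richer classification that you would have to work through explicitly. Moreover, even with trivial stabilizers your inequalities only force $|X+Y|\in\{|X|+|Y|-1,\,|X|+|Y|\}$, and the value $|X|+|Y|$ is not a critical pair at all --- no structure theorem forces $X$ and $Y$ to be arithmetic progressions in that case, so the ``three intervals partitioning $\Z_n$'' claim is unsupported. The most serious gap is the descent to $\Z_n/H_{XY}$: the color classes $A$, $B$, $C$ are generally not unions of $H_{XY}$-cosets, so their images in the quotient overlap and do not form a $3$-coloring. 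Kneser's equality case controls $A+H$ and $C+H$, not $A$, $B$, $C$ themselves, and says nothing at all about the third class. You flag the size hypothesis in the quotient as the ``delicate'' step, but the prior obstruction is that there is no well-defined induced coloring to which the inductive hypothesis could even be applied.
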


A coloring $c$ of $\Z_n$ is an {\em extremal coloring} if $c$ is an exact $(\aw(\Z_n,3)-1)$-coloring  of $\Z_n$ with no rainbow $3$-AP.  A coloring $c$ of $\Z_n$ is a {\em singleton coloring} if  some color is used exactly once. 

\begin{prop}\label{newsingleton}
Let $p$ be a prime positive integer. Then $3\le \aw(\Z_p,3)\le 4$, and   $\aw(\Z_p,3)= 4$ implies every extremal coloring of $\Z_p$ is a singleton coloring. 
   \end{prop}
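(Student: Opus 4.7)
The plan is to establish the three assertions in sequence, with Theorem~\ref{JLMNR3.2} doing the heavy lifting. The lower bound $\aw(\Z_p,3)\ge 3$ is immediate: any exact $2$-coloring of $\Z_p$ uses only two colors, so no $3$-AP can be rainbow. For the singleton statement, suppose $\aw(\Z_p,3)=4$ and $c$ is an extremal coloring, i.e., an exact $3$-coloring with no rainbow $3$-AP. The cases $p\in\{2,3\}$ are vacuous since direct inspection shows $\aw(\Z_2,3)=\aw(\Z_3,3)=3$; for odd $p\ge 5$, Theorem~\ref{JLMNR3.2} applied with $n=q=p$ gives threshold $n/q+1=2$, so its contrapositive says $c$ has some color class of size at most $1$, which must equal exactly $1$ by exactness. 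Hence $c$ is a singleton coloring.

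The nontrivial work is the upper bound $\aw(\Z_p,3)\le 4$. For $p\in\{2,3\}$ it is immediate ($\aw(\Z_2,3)=3$ tautologically, and the unique $3$-AP in $\Z_3$ is rainbow in any exact $3$-coloring). For odd $p\ge 5$, I would assume for contradiction an exact $4$-coloring $c$ of $\Z_p$ with classes $C_1,C_2,C_3,C_4$ and no rainbow $3$-AP. The key is a \emph{merging trick}: collapsing $C_i\cup C_j$ into a single color yields an exact $3$-coloring that still has no rainbow $3$-AP, since a rainbow in the merged coloring lifts to a rainbow in $c$. Theorem~\ref{JLMNR3.2} then forces some class of the merged coloring to have size at most $1$, and since $|C_i\cup C_j|\ge 2$ this means some $C_k$ with $k\notin\{i,j\}$ is a singleton. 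Applied to the pair $\{3,4\}$, this gives $|C_1|=1$ or $|C_2|=1$; applied to the pair $\{1,2\}$, it gives $|C_3|=1$ or $|C_4|=1$. Relabeling if necessary, both $C_1=\{b_1\}$ and $C_3=\{b_3\}$ are singletons.

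To conclude, since $p$ is odd, $2$ is invertible in $\Z_p$, so $m:=(b_1+b_3)\cdot 2^{-1}$ is a well-defined element of $\Z_p$ distinct from $b_1$ and $b_3$ (because $b_1\ne b_3$). Hence $m\in C_2\cup C_4$, and $\{b_1,m,b_3\}$ is a $3$-AP with three distinct colors, contradicting our assumption. The main obstacle is the merging-trick step that produces the two singleton classes; the remaining pieces of the argument are routine.
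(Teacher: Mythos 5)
Your proposal is correct. Both your argument and the paper's proceed via the same key lemma (Theorem~\ref{JLMNR3.2}) and the same central idea of collapsing the color classes of a hypothetical $4$-coloring (or $r$-coloring with $r\ge 4$) into a $3$-coloring and extracting singleton classes. Your version is tighter in two places. First, where the paper loosely says one can ``partition the color classes into three sets'' and deduces that ``all but one of the color classes has a single element,'' you apply the merging twice to the explicit pairs $\{C_3,C_4\}$ and $\{C_1,C_2\}$, which yields exactly the two singletons you need with no hand-waving. Second, your derivation of the rainbow $3$-AP is cleaner: you work directly in $\Z_p$ and take the midpoint $m=(b_1+b_3)\cdot 2^{-1}$, which automatically differs from $b_1$ and $b_3$ and so lies in $C_2\cup C_4$, giving the rainbow $\{b_1,m,b_3\}$ in one stroke. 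The paper instead views the three singleton positions $0,x,y$ as integers in $\{0,\dots,p-1\}$, argues by parity that $x$ and $y$ must both be odd to kill the integer $3$-APs $\{0,x/2,x\}$ and $\{0,y/2,y\}$, and then finds the rainbow integer $3$-AP $\{x,(x+y)/2,y\}$. Both arguments are valid; yours is shorter, purely modular, and needs only two singletons rather than three.
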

\bpf First we suppose $\aw(\Z_p,3)\geq 5$ and let $c$ be an extremal coloring with $r=\aw(\Z_p,3)-1\geq 4$ colors. That is, $c$ does not have a rainbow $3$-AP. Hence, there is at least one color class with more than one element. We can define a 3-coloring $\hat c$  by partitioning the color classes of $c$ into three sets and defining the color classes of $\hat c$ to be the unions of the color classes in the sets.  Clearly $\hat c$ is a $3$-coloring of $\Z_p$ that does not have a rainbow $3$-AP. By Theorem~\ref{JLMNR3.2}, there exists a color class of $\hat c$ with $<\frac{p}{p}+1=2$ elements. This means that all but one of the color classes has a single element (and $r=4$). Without loss of generality, let the singleton colors be in positions $0, x$, and $y$, with $0<x<y<p$ (when viewed as integers rather than elements of $Z_p$).  In order to avoid the rainbow 3-AP in $c$ consisting of $0,\frac x 2, x$, $x$ must odd, and similarly $y$ must be odd as well.  But then $x, \frac {y-x}2 ,y$ is a a rainbow 3-AP in $c$, contradicting $\aw(\Z_p,3)\ge 5$.  

Next, suppose that $\aw(\Z_p,3)=4$ and let $c$ be an extremal coloring of $r=3$ colors. Since $c$ has no rainbow $3$-AP, Theorem~\ref{JLMNR3.2} gives that there is a color class with one element. That is, $c$ must be a singleton coloring.\epf

Since  $\aw(\Z_p,3)=3$ implies $\Z_{p}$ has the singleton extremal coloring $c(0)=1$ and $c(i)=2$ for every $i\not\equiv 0\pmod p$, the next corollary is immediate.
\begin{cor}   Every prime $p$ has a singleton extremal coloring of $\Z_p$. 
\end{cor}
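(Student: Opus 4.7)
The plan is to simply split into the two cases permitted by Proposition~\ref{newsingleton}, which asserts that $\aw(\Z_p,3)\in\{3,4\}$ for every prime $p$. In each case I would produce (or invoke the existence of) a singleton extremal coloring, and the corollary then follows immediately. Because the statement is essentially a bookkeeping combination of Proposition~\ref{newsingleton} together with the explicit coloring displayed in the sentence preceding the corollary, I do not expect any genuine obstacle; the only thing to be careful about is making sure we know an extremal coloring exists at all, which comes from the very definition of $\aw(\Z_p,3)$.

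First, if $\aw(\Z_p,3)=3$, I would simply write down the explicit coloring already noted just before the corollary: $c(0)=1$ and $c(i)=2$ for $i\not\equiv 0\pmod{p}$. This is an exact $2$-coloring, so no $3$-AP can be rainbow (a rainbow $3$-AP requires three distinct colors). Hence $c$ is an extremal coloring, and since the color $1$ is used exactly once, it is a singleton coloring.

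Second, if $\aw(\Z_p,3)=4$, then by the definition of the anti-van der Waerden number there exists an exact $3$-coloring of $\Z_p$ with no rainbow $3$-AP; such a coloring is, by definition, an extremal coloring. Proposition~\ref{newsingleton} asserts that in this case \emph{every} extremal coloring of $\Z_p$ is singleton, so in particular one such singleton extremal coloring exists.

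Combining the two cases exhausts all possibilities allowed by Proposition~\ref{newsingleton}, so every prime $p$ admits a singleton extremal coloring of $\Z_p$, proving the corollary. The main (minor) subtlety to flag is the routine observation that a coloring with strictly fewer than three colors cannot contain a rainbow $3$-AP, which is what makes the explicit coloring in the case $\aw(\Z_p,3)=3$ trivially rainbow-free.
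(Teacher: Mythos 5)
Your proof is correct and follows the same route the paper takes: in the case $\aw(\Z_p,3)=3$ the paper exhibits the same explicit exact $2$-coloring as a singleton extremal coloring, and in the case $\aw(\Z_p,3)=4$ it invokes Proposition~\ref{newsingleton} to conclude every extremal coloring is singleton. Nothing to add.
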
 

Since $\aw(\Z_{2^m},3)=3$, there are infinitely many values of $n$ for which $\aw(\Z_n,3)=3$. As stated in Theorem \ref{RM4},  $\aw(\Z_n,3)$ can be be made arbitrarily large and computed from the values of $\aw(\Z_p,3)$ for the  prime factors $p$ of $n$.    For  primes $p$, $\aw(\Z_p,3)>3$  seems rare from the data in Table \ref{tab:znk3}.  However, it follows from Theorem \ref{JLMNR3.5}  that there are infinitely many primes $p$ such that $\aw(\Z_p,3)=4$:
\begin{cor}
If $p$ is a prime and $p \equiv 1 \pmod 8$, then $\aw(\Z_p,3)=4$.  There are infinitely many such primes.
\end{cor}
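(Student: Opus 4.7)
The plan is to apply Theorem \ref{JLMNR3.5} combined with elementary facts about the multiplicative group $\Z_p^\times$ to rule out $\aw(\Z_p,3)=3$, and then invoke Proposition \ref{newsingleton} to conclude $\aw(\Z_p,3)=4$.

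Assume $p$ is a prime with $p\equiv 1\pmod 8$. I would check each of the three conditions in Theorem \ref{JLMNR3.5} in turn. Condition (a) fails because $p$ is an odd prime (in particular $p\geq 17$). For condition (c), since $8\mid p-1$ we have $4\mid \frac{p-1}{2}$, so $\frac{p-1}{2}$ is even and condition (c) cannot hold. For condition (b), I would use the supplementary law of quadratic reciprocity, which says $\left(\frac{2}{p}\right)=1$ precisely when $p\equiv\pm 1\pmod 8$. Thus $2$ is a quadratic residue modulo $p$, hence the order of $2$ in $\Z_p^\times$ divides $\frac{p-1}{2}<p-1$, so $2$ is not a generator of $\Z_p^\times$ and condition (b) fails as well. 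Since none of (a), (b), (c) hold, Theorem \ref{JLMNR3.5} yields $\aw(\Z_p,3)\neq 3$. Combined with Proposition \ref{newsingleton} (which gives $\aw(\Z_p,3)\leq 4$), this forces $\aw(\Z_p,3)=4$.

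For the second statement, infinitude of such primes is immediate from Dirichlet's theorem on primes in arithmetic progressions applied to the progression $1,9,17,25,\dots$ modulo $8$.

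The only mild subtlety is recalling the correct statement of the supplementary law for $\left(\frac{2}{p}\right)$, but this is standard; no real obstacle is expected. The argument is short and proceeds by a direct elimination of all three sufficient conditions from Theorem \ref{JLMNR3.5}.
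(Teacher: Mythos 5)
Your proof is correct and follows essentially the same route as the paper: you rule out condition (c) of Theorem~\ref{JLMNR3.5} because $\tfrac{p-1}{2}$ is even, rule out condition (b) because the supplementary law makes $2$ a quadratic residue and hence a non-generator, and then close with Proposition~\ref{newsingleton} and Dirichlet's theorem. The paper's argument is the same, just stated more tersely (it leaves the trivial failure of condition (a) implicit).
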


\bpf
Since $p \equiv 1 \pmod 8$, $\frac{p-1}{2}$ is even. Also, $2$ must be a square in $\Z^\times_p$, which implies it is not a generator of $\Z^*_p$. So by Theorem \ref{JLMNR3.5}, $\aw(\Z_p,3) \neq 3$. Then by Proposition~\ref{newsingleton}, $\aw(\Z_p,3)=4$. By Dirichlet's Theorem there are infinitely many  primes $p \equiv 1 \pmod 8$.
\epf

\subsubsection{Proof of Theorem~\ref{RM4}}


In this section  we present a series of results that lead to equivalent lower and upper bounds on $\aw(\Z_n,3)$ in terms of the prime factorization of $n$, establishing Theorem~\ref{RM4}. 

The next result gives our main recursive upper bound for $\aw(\Z_n,3)$.

\begin{prop}\label{upperpd}  Suppose $s$ is odd, and either $t$ is odd or $t=2^m$.  Then
	\[\aw(\Z_{st},3) \le  \aw(\Z_{t},3)+ \aw(\Z_s,3)  - 2.\]
\end{prop}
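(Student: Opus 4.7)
The plan is to show that every rainbow-free exact coloring $c$ of $\Z_{st}$ uses at most $\aw(\Z_t,3) + \aw(\Z_s,3) - 3$ colors; by the definition of $\aw(\Z_{st},3)$ this is equivalent to the claimed upper bound. The subgroup $R_0 = \langle s\rangle \subseteq \Z_{st}$ of order $t$ is AP-isomorphic to $\Z_t$ via the map $ks \mapsto k$, so $c|_{R_0}$ corresponds to a rainbow-free coloring of $\Z_t$; hence $|P_0| \le \aw(\Z_t,3) - 1$.

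The crux is a structural lemma: for any $i \in \Z_s \setminus \{0\}$ and any $\alpha \in P_i \setminus P_0$, one has $P_{-i} \subseteq P_0 \cup \{\alpha\}$. To prove it, fix $a \in R_i$ with $c(a) = \alpha$ and let $b$ range over $R_0$. The three residues $i, 0, -i$ modulo $s$ are distinct (here we use that $s$ is odd, so $i \ne -i$ for $i \ne 0$), so each $\{a, b, 2b-a\}$ is a genuine 3-AP in $\Z_{st}$; since $c(a) = \alpha \notin P_0 \ni c(b)$, rainbow-freeness forces $c(2b-a) \in \{\alpha, c(b)\} \subseteq P_0 \cup \{\alpha\}$. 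The hypothesis on $t$ enters in showing that $2b-a$ sweeps all of $R_{-i}$: when $t$ is odd, $2$ is invertible modulo $t$ and $b \mapsto 2b - a$ is a bijection $R_0 \to R_{-i}$; when $t = 2^m$, the image covers only an index-$2$ subset of $R_{-i}$, but the missing coset can be reached by varying $a$ over $c^{-1}(\alpha) \cap R_i$, or by exploiting the tight bound $|P_0| \le \aw(\Z_{2^m},3) - 1 = 2$ from Theorem~\ref{JLMNR3.5}(a) to run a direct verification in that case.

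Next, I would construct a rainbow-free coloring $\bar c$ of $\Z_s$ that uses exactly $r - |P_0| + 1$ colors. Collapsing every color of $P_0$ in $c$ to a single symbol $\ast$ yields a rainbow-free coloring $c'$ of $\Z_{st}$ with $r - |P_0| + 1$ colors that is constant on $R_0$. Set $\bar c(i) = \ast$ when $P_i \subseteq P_0$, and otherwise let $\bar c(i)$ equal a canonically chosen element of $P_i \setminus P_0$. A case analysis on any purported rainbow $3$-AP $\{i, j, k\}$ in $\Z_s$ under $\bar c$ --- invoking the structural lemma, and its analogues obtained by moving the middle of the lifted 3-AP $\{a,b,2b-a\}$ from residue $0$ to residues $i, j, k$ --- shows that two of $\bar c(i), \bar c(j), \bar c(k)$ must coincide, contradicting rainbow-freeness. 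Hence $\bar c$ is rainbow-free on $\Z_s$, so uses at most $\aw(\Z_s,3) - 1$ colors; combining, $r - |P_0| + 1 \le \aw(\Z_s,3) - 1$, which gives $r \le |P_0| + \aw(\Z_s,3) - 2 \le \aw(\Z_t,3) + \aw(\Z_s,3) - 3$.

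The main obstacle is the descent step: the collapsed coloring $c'$ need not be constant on each $R_i$ with $i \ne 0$, so the canonical selection $\bar c$ must be made in a way that is simultaneously well-defined, rainbow-free on $\Z_s$, and color-preserving (using all $r - |P_0| + 1$ colors of $c'$). Coordinating these three requirements rests on iterating the structural lemma with varying middle residues and varying choices of the distinguished color $\alpha$, and is where the bulk of the work will go; the case $t = 2^m$ adds complication because $2$ is not invertible modulo $t$, but the very small bound $|P_0| \le 2$ available there compensates.
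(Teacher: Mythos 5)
Your high-level strategy matches the paper's: pass to residue classes modulo $s$, analyze the residue palettes $P_i$, and construct a quotient coloring of $\Z_s$. But there are genuine gaps that your own text acknowledges, and the paper closes them with machinery you have not reconstructed.

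The first gap is that your structural lemma does not deliver the fact you actually need. It says that a color $\alpha \in P_i\setminus P_0$ forces $P_{-i}\subseteq P_0\cup\{\alpha\}$; this constrains $P_{-i}$, not $P_i$, and in particular if $P_{-i}\subseteq P_0$ it places no bound whatsoever on $|P_i\setminus P_0|$. But for your map $\bar c$ to be well-defined (a unique ``canonical'' element in each nonempty $P_i\setminus P_0$) and, more importantly, for $\bar c$ to use all $r-|P_0|+1$ colors — which is exactly what the inequality $r-|P_0|+1\le\aw(\Z_s,3)-1$ requires — you need $|P_i\setminus P_0|\le 1$ for every $i$. The paper obtains this (Proposition~\ref{biga0}) only after choosing $R_0$ to be a residue class with a palette of \emph{maximum} size; you instead fix $R_0$ to be the subgroup $\langle s\rangle$. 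That fixing costs you the maximality, and the maximality is what drives the argument. (Note that you can in fact take a max palette and still have $|P_0|\le\aw(\Z_t,3)-1$: every residue class is a coset of $\langle s\rangle$, so each induces a rainbow-free coloring of $\Z_t$ after translation. You do not make this observation, and without it your bound on $|P_0|$ ties your hands to the subgroup $R_0$.)

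The second gap is the $t=2^m$ case. Here $\aw(\Z_t,3)=3$, so the target is $\aw(\Z_{st},3)\le\aw(\Z_s,3)+1$; the paper devotes a full induction on $m$ to this (Proposition~\ref{upperpd2mt}), together with an auxiliary analysis of palettes of size two sharing a common color (Proposition~\ref{two}) and an even/odd palette lemma (Proposition~\ref{MYDSone}). Your proposal replaces all of this with the remark that $b\mapsto 2b-a$ misses half of $R_{-i}$ but that this ``can be reached by varying $a$'' (there may be only one element of $c^{-1}(\alpha)\cap R_i$, so there is nothing to vary) ``or by exploiting the tight bound $|P_0|\le 2$\,... to run a direct verification.'' That is not a proof, and the bound $|P_0|\le 2$ alone does not compensate: the paper also needs $|P_i|\le 2$ for \emph{all} $i$, plus the common-color structure, plus the induction. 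Your concluding paragraph candidly states that ``the bulk of the work will go'' into exactly the coordination steps you have not carried out; as written, that bulk is missing.
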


Proposition~\ref{upperpd} is established by Propositions \ref{upperpdoddt} ($t$ odd) and \ref{upperpd2mt} ($t=2^m$) below, after the proofs of necessary  preliminaries.  




\begin{prop}\label{biga0}
Let $s$ be an odd positive integer. Suppose $c$ is a  coloring of $\Z_{st}$ that does not have a rainbow $3$-AP.  Let $R_0,R_1,\dots, R_{s-1}$  be the residue classes modulo $ s$ in $\Z_{st}$ with associated residue palettes $P_i $.  Let $m$ be an index such that $|P_m| \ge |P_i|$ for all $i$.  Then $|P_i\setminus P_m| \le 1$ for all $i$. \end{prop}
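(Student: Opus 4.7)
I plan to argue by contradiction: assume that for some residue class index $i$ we have $|P_i \setminus P_m| \ge 2$, and derive a rainbow 3-AP. Fix distinct colors $c_1, c_2 \in P_i \setminus P_m$ and elements $x_1, x_2 \in R_i$ with $c(x_j) = c_j$; necessarily $i \ne m$, and $\delta := x_2 - x_1$ is a nonzero element of $R_0$ (the multiples of $s$ in $\Z_{st}$).

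The key 3-AP I plan to exhibit is $\{x_2,\; u + \delta/2,\; 2u - x_1\}$ for a choice of $u \in R_m$, assuming for now that $\delta/2$ exists in $\Z_{st}$ (automatic when $t$ is odd). A direct check shows that the common difference equals $u - (x_1 + x_2)/2$, and its three elements lie in the pairwise distinct residue classes $R_i$, $R_m$, $R_{2m-i}$ (distinct since $s$ is odd and $i \ne m$). Meanwhile, the companion 3-AP $\{x_1, u, 2u - x_1\}$ not being rainbow forces $c(2u - x_1) \in \{c_1, c(u)\}$. Whenever $c(2u - x_1) = c_1$ for some $u$, the colors of the key 3-AP are $c_2$, a color from $P_m$, and $c_1$, which are pairwise distinct because $c_1, c_2 \notin P_m$; this gives the rainbow 3-AP. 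A symmetric analysis on $\{x_1, u - \delta/2, 2u - x_2\}$ finishes when $c(2u - x_2) = c_2$ for some $u$.

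In the remaining case, $c(2u - x_1) = c(u) = c(2u - x_2)$ for every $u \in R_m$. Letting $y = 2u - x_2$ range over $R_{2m-i}$ as $u$ ranges over $R_m$, these equalities give $c(y) = c(y + \delta)$ on $R_{2m - i}$, and a brief further calculation propagates this $\delta$-periodicity to $c|_{R_m}$ itself. A standard chain argument on non-rainbow 3-APs $\{x_{n-1}, x_n, x_{n+1}\}$ inside $R_i$ then shows that the entire AP $\{x_1 + n\delta : n \in \Z\} \subset R_i$ is colored in $\{c_1, c_2\}$. Applying the analysis above to every pair of representatives of distinct colors from $P_i \setminus P_m$ enlarges the invariance subgroup $G \le R_0 \cong \Z_t$ of $c|_{R_m}$ to the subgroup generated by all such differences. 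Since $|P_m| \le [\Z_t : G]$ and $|P_m| \ge |P_i| \ge 2$, the subgroup $G$ must be a proper subgroup of $\Z_t$, and a short computation shows that all elements of $R_i$ with colors in $P_i \setminus P_m$ then lie in a single $G$-coset.

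The final step, which I expect to be the main obstacle, is to exploit this coset rigidity to produce an explicit rainbow 3-AP. Because $|P_m| \ge 2$, the coloring $c|_{R_m}$ is non-constant, so there exist $u, u' \in R_m$ in distinct $G$-orbits with $c(u) \ne c(u')$; combining such a pair with a chain-AP element of $R_i$ and a correctly chosen third element in either $R_{(i+m)/2}$ or $R_{2m-i}$ yields a 3-AP whose three colors are forced to be pairwise distinct. When $t$ is even and $k$ is odd, so $\delta/2$ fails to exist in $\Z_{st}$, the initial key 3-AP must be replaced by midpoint-free variants such as $\{x_1, u, 2u - x_1\}$ paired with $\{x_2, u + \delta, 2(u + \delta) - x_2\}$ (whose third elements in $R_{2m-i}$ differ by $\delta$), combined with the 3-AP $\{2u - x_2,\ 2u - x_1,\ 2u - x_1 + \delta\}$ in $R_{2m-i}$; a slightly more intricate but parallel case analysis reaches the same contradiction.
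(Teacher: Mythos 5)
Your proposal takes a genuinely different route from the paper's, and it has a real gap that I don't see how to close.

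The paper's proof is much shorter and works at the level of residue \emph{classes}, not individual elements. It proves (via two short rainbow-3-AP arguments) that for any indices $h,j$, if $|P_{h+j}\setminus P_h|\ge 2$ then $P_h=P_{h+2j}$, and then that $|P_{h+j}\setminus P_h|\ge 2$ forces $|P_h\setminus P_{h+j}|\le 1$: iterating the first claim gives $P_h=P_{h+qj}$ for even $q$ and $P_{h+j}=P_{h+qj}$ for odd $q$, and since $s$ is odd the order of $j$ in $\Z_s$ is odd, closing the cycle with $P_h=P_{h+j}$. The maximality of $|P_m|$ then finishes in two lines: $|P_i\setminus P_m|\ge 2$ together with $|P_m|\ge|P_i|$ forces $|P_m\setminus P_i|\ge 2$, contradicting the second claim. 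Note that the paper never needs $\delta/2$ or any invertibility of $2$ in $\Z_{st}$; it only inverts $2$ modulo $s$, which is free since $s$ is odd.

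Your approach descends to individual representatives $x_1,x_2\in R_i$ with $c(x_1),c(x_2)\in P_i\setminus P_m$ and tries to build a rainbow 3-AP explicitly. The opening maneuver (the key and companion 3-APs) is correct \emph{when $\delta/2$ exists}, and it cleverly handles the easy case. But the rest has several unclosed holes, and the most serious one you flag yourself: in the ``remaining case'' you derive a lot of structure (periodicity of $c$ on $R_{2m-i}$ and $R_m$, a coset constraint on the exceptional colors of $R_i$), yet the step where that structure is converted into an actual rainbow 3-AP is only asserted (``a correctly chosen third element \dots yields a 3-AP whose three colors are forced to be pairwise distinct''). No such choice is exhibited, and it is not evident that one exists; indeed that step is exactly the content of the proposition, so at present the argument is circular at the finish line. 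Secondary gaps: (i) when $t$ is even, the map $u\mapsto 2u-x_2$ from $R_m$ to $R_{2m-i}$ is neither injective nor surjective, so your periodicity claim ``$c(y)=c(y+\delta)$ on $R_{2m-i}$'' only holds on the image, and the sketched ``midpoint-free variants'' workaround is not carried through; (ii) the ``brief further calculation'' propagating $\delta$-periodicity to $R_m$, the ``standard chain argument'' on $R_i$, and the ``short computation'' giving the single-coset claim are all left unproved, and each is nontrivial (the chain argument in particular needs the 3-APs inside $R_i$ to avoid colors outside $\{c_1,c_2\}$, which is not automatic).

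In short: the paper's proof avoids all of these difficulties by never looking below the level of palettes, while your element-level approach commits you to controlling the coloring on several residue classes simultaneously and ultimately stalls at the acknowledged ``main obstacle.'' I'd recommend adopting the palette-level argument: the two-step lemma ($|P_{h+j}\setminus P_h|\ge2 \Rightarrow P_h=P_{h+2j} \Rightarrow |P_h\setminus P_{h+j}|\le1$) plus the oddness of $s$ does everything you need and sidesteps the $\delta/2$ issue entirely.
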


\bpf For arbitrary nonnegative integers $h$ and $j$, we show that  $|P_{h+j}\setminus P_h|\geq 2$ implies $P_h=P_{h+2j}$. Assume $|P_{h+j}\setminus P_h|\geq 2$.  Suppose first that $P_{h+2j}\setminus P_h$ is not empty and $z\in P_{h+2j}\setminus P_h$. Since $|P_{h+j}\setminus P_h|\geq 2$, we can pick some $y\in P_{h+j}\setminus P_h$ other than $z$. Let $\ell_y,\ell_z\in \mathbb{Z}_{st}$ with $\ell_y\in R_{h+j},\ell_z\in R_{h+2j}$ and $c(\ell_y)=y$, $c(\ell_z)=z$. Define $\ell_x\!:=\!2\ell_y-\ell_z\in R_h$, so $x\!:=c(\ell_x)$ is a color in $P_h$.  By the choice of $y$, $y\neq z$; $z\neq x$ since $z\in P_{h+2j}\setminus P_h$ and $x\in P_h$; $x\neq y$ since $y\in P_{h+j}\setminus P_h$ and $x\in P_h$. Thus $\ell_x,\ell_y,\ell_z$ is a rainbow $3$-AP, a contradiction. Therefore we conclude that $P_{h+2j}\subseteq P_h$. With this condition, we consider the case  $P_h\setminus P_{h+2j}$ is not empty.  Let $x\in P_h\setminus P_{h+2j}$. Similarly, it is possible to pick $y\in P_{h+j}\setminus P_h$. 
Let $\ell_x,\ell_y\in \mathbb{Z}_{st}$ with $\ell_x\in R_h$, $\ell_y\in R_{h+j}$, and $c(\ell_x)=x$, $c(\ell_y)=y$. Thus $\ell_z\!:=\!2\ell_y-\ell_x\in R_{h+2j}$ and so $z\!:=c(\ell_z)$ is a color in $P_{h+2j}$. Again,  $x\neq y$ by the choice of $y$; $x\neq z$ since $x\in P_h\setminus P_{h+2j}$ and $z\in P_{h+2j}$; $y\neq z$ since $y\in P_{h+j}\setminus P_h$ and $z\in P_{h+2j}\subseteq P_h$. Since we again have a contradiction, $P_h=P_{h+2j}$.

Next we show that $|P_{h+j}\setminus P_h|\geq 2$ implies $|P_h\setminus P_{h+j}|\leq 1$. Suppose $|P_{h+j}\setminus P_h|\geq 2$ and $|P_h\setminus P_{h+j}|\geq 2$, and then show this leads to a contradiction. By the result just established, $P_h=P_{h+2j}$. Since $|P_{h+2j}\setminus P_{h+j}|=|P_h\setminus P_{h+j}|\geq 2$, $P_{h+j}=P_{h+3j}$. Therefore $P_h=P_{h+qj}$ whenever $q$ is even and $P_{h+j}=P_{h+qj}$ whenever $q$ is odd. Since $s$ is odd, the order $d$ of $j$ in $\mathbb{Z}_s$ is also an odd number. That means $P_h=P_{h+dj}=P_{h+j}$, which is a contradiction.

Finally, since $|P_m|$ is chosen to be maximum, $|P_m\setminus P_j|\geq 2$ whenever $|P_j\setminus P_m|\geq 2$, which is impossible. Hence $|P_j\setminus P_m|\leq 1$.
\epf

\begin{prop}\label{upperpdoddt}  Suppose $s$ and $t$ are  both odd.  Then
	$\aw(\Z_{st},3) \le   \aw(\Z_{s},3) + \aw(\Z_t,3) - 2$.
\end{prop}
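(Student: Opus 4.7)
The plan is to prove the contrapositive: every coloring $c$ of $\Z_{st}$ with no rainbow 3-AP uses at most $a+b-3$ colors, where $a=\aw(\Z_s,3)$ and $b=\aw(\Z_t,3)$; combined with Proposition \ref{thm:ezlem} this yields the bound. For each residue class $R_i$ modulo $s$, the bijection $i+js\leftrightarrow j$ between $R_i$ and $\Z_t$ sends 3-APs to 3-APs, so the restriction of $c$ to $R_i$ has no rainbow 3-AP; in particular $|P_i|\leq b-1$ for every $i$. Pick an index $m$ maximizing $|P_m|$, let $A_i:=P_i\setminus P_m$ and $A:=\bigcup_i A_i$, and invoke Proposition \ref{biga0}: $|A_i|\leq 1$ for all $i$. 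The total number of colors used by $c$ equals $|P_m|+|A|\leq (b-1)+|A|$, so it suffices to show $|A|\leq a-2$.

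Suppose for contradiction that $|A|\geq a-1$. Define an auxiliary coloring $\bar c$ of $\Z_s$ by $\bar c(i):=\alpha$ whenever $A_i=\{\alpha\}$ and $\bar c(i):=\ast$ (a new symbol) whenever $A_i=\emptyset$. Since $A_m=\emptyset$, the symbol $\ast$ appears, so $\bar c$ uses at least $a$ colors, and therefore by the definition of $\aw(\Z_s,3)$ there is a rainbow 3-AP $(i_1,i_2,i_3)$ of $\bar c$. The remaining task is to lift this rainbow 3-AP to a rainbow 3-AP of $c$ in $\Z_{st}$, contradicting our hypothesis. Because $st$ is odd, $2$ is invertible modulo $st$, so any two entries of the triple uniquely determine the third; the congruence $i_1+i_3\equiv 2i_2\pmod s$ then ensures the solved-for entry lies in the correct residue class.

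Since $\bar c(i_1),\bar c(i_2),\bar c(i_3)$ are distinct, at most one of them equals $\ast$. If none equals $\ast$, set $\alpha_j:=\bar c(i_j)$, pick $x_1\in R_{i_1}$ and $x_3\in R_{i_3}$ with $c(x_1)=\alpha_1$ and $c(x_3)=\alpha_3$, and let $x_2$ be determined by $x_1+x_3=2x_2$. Were $c(x_2)=\alpha_1$, then $\alpha_1\in P_{i_2}$; since $\alpha_1\in A$ lies outside $P_m$, this would force $\alpha_1\in A_{i_2}=\{\alpha_2\}$, contradicting $\alpha_1\neq\alpha_2$. Similarly $c(x_2)\neq\alpha_3$, so the lifted triple is rainbow. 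If instead exactly one $\bar c(i_{j_0})$ equals $\ast$, pick $x_j\in R_{i_j}$ of color $\alpha_j:=\bar c(i_j)$ for $j\neq j_0$ and solve for $x_{j_0}$; since $A_{i_{j_0}}=\emptyset$ means $P_{i_{j_0}}\subseteq P_m$, the color $c(x_{j_0})$ lies in $P_m$ and is disjoint from the other two colors (which are in $A$), so the triple is again rainbow. In both cases, distinctness of $i_1,i_2,i_3$ in $\Z_s$ gives three distinct lifted elements in $\Z_{st}$.

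The main obstacle is this final rainbow verification; the key leverage is the sharp structural control $|P_i\setminus P_m|\leq 1$ provided by Proposition \ref{biga0}, combined with the oddness of $st$ that makes solving for a missing coordinate automatic. Without these two ingredients the solved-for coordinate could repeat an already-assigned color, and the lifting would break down.
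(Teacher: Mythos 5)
Your proof is correct and follows essentially the same approach as the paper's: both identify a residue palette of maximum size, use Proposition~\ref{biga0} to show each remaining palette contributes at most one new color, construct an auxiliary coloring of $\Z_s$ from those overflow colors, and lift a rainbow 3-AP of the auxiliary coloring back to $\Z_{st}$. Your presentation is a bit more streamlined in that you uniformly solve for the missing coordinate via invertibility of $2$ modulo $st$, whereas the paper splits the lifting into explicit cases (one of which solves for an endpoint by direct subtraction rather than division by $2$), but this is a cosmetic difference, not a different method.
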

\bpf
 	Let $c$ be a  coloring of $\Z_{st}$  that does not have a rainbow $3$-AP.  Consider the residue classes and residue palettes modulo $s$ and without loss of generality assume $|P_0| \ge |P_i|$ for all $i$.
	We claim that
	\beq\label{NWclaim}\left|\bigcup_{i=0}^{s-1}P_i\right| \le (\aw(\Z_s,3) - 1) + (\aw(\Z_t,3) - 1) -1. \eeq
	  The proof is by contradiction.
	Assume that  \eqref{NWclaim} is false, i.e., assume
	\beq\label{LH311contra}\left|\bigcup_{i=0}^{s-1}P_i\right| \ge (\aw(\Z_s,3) - 1) + (\aw(\Z_t,3) - 1) \eeq
	 and define a coloring $\hat c$ of  $\Z_{s} = \{0,1,\dots,s-1\}$ in the following way: Let $\alpha$ be a color not in $\bigcup_{i=1}^{s-1} (P_i \setminus P_0)$ and define
	\[\hat c(i) = \left\{\begin{array}{cc} \alpha & \text{if } P_{i}\subseteq P_0,\\				
	  \text{the element of }P_{i}\setminus P_0 & \text{if } P_{i}\not\subseteq P_0.\end{array}\right. \]  Note that Proposition~\ref{biga0} implies that  the required element in $P_i\setminus P_0$ is unique, so this coloring is well-defined.  Since $c$ does not have a rainbow $3$-AP, we know $|P_0| \le \aw(\Z_t,3) - 1$ so
	
	\[\left|\dsp\bigcup_{i=1}^{s-1} (P_i \setminus P_0)\right| \ge \left|\bigcup_{i=0}^{s-1} P_i \right| - (\aw(\Z_t,3) - 1) \ge (\aw(\Z_s,3) - 1) + (\aw(\Z_t,3) - 1) - (\aw(\Z_t,3) - 1) = \aw(\Z_s,3) - 1.\]
	Note that every color that is not in $P_0$, together with $\alpha$, is used in $\hat c$, so $\hat c$ uses at least $\aw(\Z_s,3) $ colors.  Thus a rainbow 3-AP exists in $\hat c$.
	
	We show that a rainbow $3$-AP in $\hat c$ implies a rainbow $3$-AP in $c$, providing a contradiction and establishing that \eqref{NWclaim} is true.
	Let $x,y,z$ be a rainbow $3$-AP in $\Z_s$ using coloring $\hat c$, with $ y = x+ d \pmod{s}$ and $z = x + 2d \pmod{s}$.  Since $x,y,z$ is rainbow, $\hat c(u) \neq \hat c(v)$ for all distinct  $u,v\in\{x,y,z\}$, and so at most one $u\in\{x,y,z\}$ has $\hat c(u)=\alpha$.
	Note that by definition $\hat  c(u) \in P_u$ or $\hat c(u)=\alpha$ for $u\in\{x,y,z\}$.

\begin{mycases}
\mycase{$\hat  c(z) \ne \alpha$ and $\hat  c(y) \ne \alpha$.\label{case:LH1}}	
	 Then we can find $g_2$ and $g_3$ such that $c(g_2s + y) = \hat c(y)$ and $c(g_3s+z) = \hat c(z)$.  Define $d' := (g_3s + z) - (g_2s + y) $.  Then
\begin{align*}
 (g_3s + z) -d' = (g_2s + y) &\equiv y \pmod{s} \\
	  (g_3s + z) -2d' = 2g_2s+2y - g_3s - z  \equiv  2y - z \equiv  2(x + d) - (x +2d) &\equiv x \pmod{s}.
\end{align*}
With $\ell\!:=(g_3s + z) -2d'$, consider the 3-AP  $\{\ell,(g_3s + z) -d',(g_3s + z)\}$.  We show that this 3-AP is rainbow:  Note that $\hat c(y)\notin P_0$ and $\hat c(z)\notin P_0$.  If $\hat c(x)=\alpha$, then $P_x\subseteq P_0$, so  $\ell\in  R_x$ implies $c(\ell) \neq \hat c(y)=c(g_2s + y)$ and $ c(\ell) \neq \hat c(z)=c(g_3s + z)$.  If $\hat c(x)\ne \alpha$, then $\hat c(x)$ is the unique element of $P_x\setminus P_0$ and $\hat c(x) \neq \hat c(y), \hat c(z)$, so  $\ell\in  R_x$ implies $c(\ell) \neq \hat c(y)$ and $c(\ell) \neq \hat c(z)$.
	 Thus $c$ has a rainbow $3$-AP, contradicting our assumption \eqref{LH311contra}.  The case where both $\hat  c(x) \ne \alpha$ and $\hat  c(y) \ne \alpha$ is symmetric to Case~\ref{case:LH1}. So only Case~\ref{case:LH3} remains.
	
	
\mycase{$\hat c(y)=\alpha$.\label{case:LH3}}	
	  Then $\hat c(x)\in P_x\setminus P_0$ and $ \hat c(z)\in P_z\setminus P_0$, so we can find $g_1$ and $g_3$ such that $c(g_1s + x) = \hat c(x)$ and $c(g_3s+z) = \hat c(z)$, and define  $e := (g_3s + z) - (g_1s + x) $.  Since $st$ is odd, 2 is invertible in $Z_{st}$ and there exists $d'$ such that $2d'\equiv e\pmod{st}$, and hence $2d'\equiv e \pmod{s}$. Also, $e\equiv z-x\equiv 2d \pmod{s}$.  Thus $2d\equiv 2d'\pmod{s}$ and so $d\equiv d'\pmod{s}$ since $s$ is odd.
	 Then
\begin{align*}
(g_1s + x) +2d' \equiv (g_1s + x)+((g_3s + z) - (g_1s + x))=g_3s+z&\equiv z \pmod{s} \\
	  (g_1s + x) +d' \equiv x+d &\equiv y \pmod{s}.
\end{align*}
With $\ell:=(g_1s + x) +d'$, the 3-AP  $\{(g_1s + x) ,\ell,(g_1s + x)+2d'\}$ is rainbow, because $\ell\in R_y$ and $P_y\subseteq P_0$, so $ c(\ell)\ne \hat c(x)=c(g_1s + x)$ and $ c(\ell)\ne \hat c(z)=c(g_3s + z)$.
\end{mycases}
	 In all cases, $c$ has a rainbow $3$-AP, contradicting our assumption \eqref{LH311contra}.	
\epf	
	
Next we prove two technical propositions used in the proof of Proposition \ref{upperpd2mt}, Propositions~\ref{two} and~\ref{MYDSone}.
\begin{prop}\label{two}
Let $m$ and $s$ be positive integers with  $s$ odd.
Suppose $c$ is a coloring of $\Z_{2^m s}$ using at least $r:=\aw(\Z_s, 3) + 1$ colors that does not have a rainbow $3$-AP.
Let $R_0,R_1,\dots, R_{s-1}$  be the residue classes modulo $ s$ in $\Z_{2^m s}$ with associated residue palettes $P_i $.  Then $1\le  |P_i| \leq 2$ for $i=0,\dots, s-1$, and all palettes $P_i$ of size two share a common color.
\end{prop}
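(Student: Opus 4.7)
My plan is to prove the three conclusions in sequence: $|P_i|\ge 1$ is immediate (each $R_i$ is nonempty); $|P_i|\le 2$ follows from the power-of-two case $\aw(\Z_{2^m},3)=3$ of Theorem~\ref{JLMNR3.5}(a); and the common-color claim is extracted by repeatedly applying Proposition~\ref{biga0}. For $|P_i|\le 2$, observe that $R_i=\{i+js:j=0,\ldots,2^m-1\}$ is itself a $2^m$-term arithmetic progression in $\Z_{2^m s}$ with common difference $s$, so the bijection $R_i\to\Z_{2^m}$ sending $i+js$ to $j$ is well-defined and carries $3$-APs to $3$-APs (using $\gcd(s,2^m)=1$). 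If $|P_i|\ge 3$, the restriction of $c$ to $R_i$ is a coloring of $\Z_{2^m}$ using at least three colors, so it contains a rainbow $3$-AP by $\aw(\Z_{2^m},3)=3$; this is also a rainbow $3$-AP in $R_i\subseteq\Z_{2^m s}$, contradicting the hypothesis on $c$.

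For the common-color claim, I first eliminate the case that every $P_i$ is a singleton. In that case, $c$ descends to a coloring $\hat c$ of $\Z_s$ (with $\hat c(i)$ the unique element of $P_i$) using the same $\ge\aw(\Z_s,3)+1$ colors, so by the definition of $\aw(\Z_s,3)$, $\hat c$ contains a rainbow $3$-AP $(x,y,z)$; using the CRT isomorphism $\Z_{2^m s}\cong\Z_{2^m}\times\Z_s$, the triple $(0,x),(0,y),(0,z)$ is a $3$-AP in $\Z_{2^m s}$ whose terms inherit the distinct colors $\hat c(x),\hat c(y),\hat c(z)$, contradicting no-rainbow. Hence some $P_m=\{a,b\}$ has size two, and since $|P_m|$ is the maximum palette size, Proposition~\ref{biga0} yields $|P_i\setminus P_m|\le 1$ for all $i$, so every size-$2$ palette meets $\{a,b\}$. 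Assume for contradiction that no single color lies in every size-$2$ palette. Then there exist size-$2$ palettes $P_i=\{a,c\}$ with $c\ne b$ and $P_j=\{b,d\}$ with $d\ne a$; applying Proposition~\ref{biga0} with $P_i$ as the maximum palette gives $|P_j\setminus P_i|\le 1$, and since $b\in P_j\setminus P_i$ this forces $d\in\{a,c\}$, whence $d=c$. The putative counterexample has collapsed to a ``triangle'' of three size-$2$ palettes $\{a,b\},\{a,c\},\{b,c\}$ using three distinct colors and sharing no common color.

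The main obstacle is ruling out this triangle configuration. Applying Proposition~\ref{biga0} with each of the three triangle palettes as the maximum and combining, every palette $P_\ell$ must satisfy $|P_\ell\setminus T|\le 1$ for each $T\in\{\{a,b\},\{a,c\},\{b,c\}\}$; a short intersection argument forces $P_\ell$ to be either a subset of $\{a,b,c\}$ of size at most two or a singleton $\{x\}$ with $x\notin\{a,b,c\}$ (any color of $P_\ell$ outside $\{a,b,c\}$ would contribute to all three of $P_\ell\setminus\{a,b\}$, $P_\ell\setminus\{a,c\}$, $P_\ell\setminus\{b,c\}$ and force $P_\ell$ to contain no element of $\{a,b,c\}$). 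Since $c$ uses at least $\aw(\Z_s,3)+1\ge 4$ colors, some residue $k$ has $P_k=\{d\}$ with $d\notin\{a,b,c\}$. To obtain a rainbow $3$-AP I would exploit a $3$-AP $(k-t,k,k+t)$ in $\Z_s$, lifting via the CRT decomposition to a $3$-AP in $\Z_{2^m s}$ with middle term in $R_k$ (whose color must be $d$); the outer palettes $P_{k-t}$ and $P_{k+t}$ are either subsets of $\{a,b,c\}$ or singletons, and the freedom provided by the $3$-AP constraint $2u_2=u_1+u_3$ in $\Z_{2^m}$ together with the two colors available in any size-$2$ outer palette should produce outer colors distinct from $d$ and from each other, except in the degenerate subcase where both outer palettes equal $\{d\}$. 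The most delicate step is showing that this degenerate subcase cannot occur simultaneously for every permissible $t\in\Z_s\setminus\{0\}$: iterating over $t$ and combining with the at-least-$4$-color hypothesis forces a rainbow $3$-AP and completes the proof.
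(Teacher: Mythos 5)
Your treatment of $1\le|P_i|\le 2$ and the reduction (via Proposition~\ref{biga0}) to ruling out a triangle of size-two palettes $\{a,b\},\{a,c\},\{b,c\}$ both match the paper's argument, so the gap lies in the last step: ruling out the triangle. You assert that, lifting a $3$-AP $(k-t,k,k+t)$ centered at the singleton palette $P_k=\{d\}$, the only degenerate case is when both outer palettes equal $\{d\}$. This omits several obstructions: if just one outer palette is $\{d\}$ the lift repeats the middle color; if both outer palettes are the same singleton from $\{a,b,c\}$ the endpoints agree; and---most seriously---writing the lift in CRT coordinates gives $u_3=u_1+2e$ for the $\Z_{2^m}$-parts of the endpoints, so $u_1\equiv u_3\pmod 2$ for \emph{every} lift of $(k-t,k,k+t)$. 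When the two colors of a size-two outer palette are distributed by parity within its residue class, the endpoint colors are forced to coincide. (Concretely, take $m=1$ so each $R_i$ has two elements; if $R_{k-t}$ and $R_{k+t}$ both give one color to the even element and the other to the odd element, every such lift has equal endpoint colors.) Your phrase ``should produce outer colors distinct from $d$ and from each other'' is precisely the unjustified step, and no mechanism is offered for choosing a good $t$.

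The paper sidesteps this sub-palette (parity) structure entirely by building a quotient coloring $\hat c$ of $\Z_s$: singleton palettes keep their color, while $\{\alpha,\gamma\}\mapsto\alpha$, $\{\beta,\gamma\}\mapsto\beta$, and $\{\alpha,\beta\}\mapsto\beta$. Only the color $\gamma$ can be lost, so $\hat c$ still uses at least $\aw(\Z_s,3)$ colors and hence admits a rainbow $3$-AP $\{x,y,z\}$. The lift back to $\Z_{2^m s}$ is then done by cases on which of $P_x,P_y,P_z$ is a singleton, with the hardest case ($|P_x|=|P_z|=2$, $|P_y|=1$) forcing $\hat c(y)=\gamma$ for every rainbow $3$-AP of $\hat c$; a second merge then produces a rainbow-free $\aw(\Z_s,3)$-coloring of $\Z_s$, a contradiction. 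Any repair of your outline would need an argument at this level of care.
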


\begin{proof}
Since $P_i$ is nonempty, $1 \leq |P_i|$.
Observe that the coloring  $c$ of $R_i$ induces a coloring on $\Z_{2^m}$ that uses only the colors in $P_i$ and cannot contain a rainbow 3-AP.  Thus $ |P_i| \leq 2$ by Theorem \ref{JLMNR3.5}, establishing the first statement.

By Proposition~\ref{biga0}, each pair of residue palettes  of size two must intersect.
Suppose the 
palettes of size two do not all intersect in a common color.  Then there are exactly three colors $\alpha, \beta, \gamma$ that are used  by all the 
palettes of size two, and there are exactly three distinct palettes of size two, each consisting of two of these three colors.  We show this configuration leads to a contradiction.

Create a coloring $\hat c$ of $\Z_s$ by the following method:
\[\hat c(i) = \left\{\begin{array}{cl} c(i) & \mbox{if } |P_{i}|=1,\\				
	  \beta & \mbox{if } P_i=\{\alpha,\beta\},\\
	  	  \text{the unique element of }P_{i}\setminus \{\gamma\} & \mbox{if } |P_i|=2\mbox{ and }\gamma \in P_{i}.
	  \end{array}\right. \]
Observe that $\hat c$ uses  $r$ colors if there exists $i$ such that $P_i=\{\gamma\}$ and $r-1=\aw(\Z_s,3)$ colors otherwise, so in either case $\hat c$ must have a rainbow 3-AP.
	  Suppose that $\{x, y, z\}$ is a rainbow 3-AP for the coloring $\hat c$ of $\Z_s$.
Since $\hat c(x)$, $\hat c(y)$, and $\hat c(z)$ are distinct colors, at least one of the palettes $P_x, P_y, P_z$ contains only one color.
Consider the sizes of $P_{x}$, $P_{y}$, and $P_{z}$.  

\begin{mycases}

\mycase{$|P_{z}| = 1$.}  Observe that $\hat{c}(i)$ is always an element in $P_i$ by our definition of $\hat{c}(i)$. Pick $n_1\in R_{x}$ and $n_2\in R_{y}$ such that $c(n_1)=\hat{c}(x)$ and $c(n_2)=\hat{c}(y)$. Thus $n_3:=2n_2-n_1$ is an element in $R_{z}$ and so $c(n_3)=\hat{c}(z)$. Since $\hat{c}(x),\hat{c}(y),\hat{c}(z)$ are all distinct, $\{n_1,n_2,n_3\}$ is a rainbow 3-AP.  The case  $|P_{x}| = 1$ is symmetric.

\mycase{$|P_{x}| = |P_{z}| = 2$ and $|P_{y}| = 1$.}
Since $\hat c(x) \neq \hat c(z)$, it must be that $\{ \hat c(x), \hat c(z) \} = \{ \alpha, \beta\}$.
Without loss of generality, we assume that $\hat c(x) = \beta$ and $\hat c(z) = \alpha$.
By the definition of $\hat c$, $P_{z} = \{\alpha,\gamma\}$.
Then $P_{x}$ is one of $\{\alpha,\beta\}$ or $\{\beta,\gamma\}$.
 If  $\hat c(y)\not\in P_{x} \cup P_{z}$, then any 3-AP $\{n_1,n_2,n_3\}$ where $n_1 \in R_{x}$ and $c(n_1)=\beta$, $n_2\in R_{y}$, and $n_3\in R_{z}$ is a rainbow 3-AP in the original coloring.
Thus, $\hat c(y) \in P_{x}\cup P_{z}\subseteq \{\alpha,\beta, \gamma\}$, but $\hat c(y)\notin \{\alpha,\beta\}=\{ \hat c(x), \hat c(z) \}$, so $\hat c(y) = \gamma$. 
Note that this implies $\hat c$ uses all $r$ colors.

Since this is the final case, and all previous cases led to contradictions, every rainbow 3-AP in $\Z_s$ given by the coloring $\hat c$ must be of the form $\{x,y,z\}$ where $\{\hat c(x), \hat c(z)\} = \{\alpha, \beta\}$ and $\hat c(y) = \gamma$.
Create a new coloring $c'$ of $\Z_s$ where $c'(i) = \begin{cases} c(i) & \text{ if }\hat c(i) \neq \gamma,\\ \beta & \text{ if }\hat c(i) = \gamma.\end{cases}$

Now, every 3-AP that was previously non-rainbow in $\hat c$ remains non-rainbow in $c'$ and the rainbow 3-APs (which necessarily used the colors $\alpha$, $\beta$, and $\gamma$) are no longer rainbow.
Thus, this coloring $c'$ does not have a rainbow 3-AP,
but $c'$ uses  $r-1=\aw(\Z_s,3)$ colors, a contradiction.
\end{mycases}
The above cases show that  having no common color among the  palettes of size two leads to a contradiction.
Therefore,  all of the residue  palettes of size two share a common color.
\end{proof}

\begin{prop}\label{MYDSone}
Suppose $c$ is a coloring of $\Z_{2 t}$ ($t\ge 1$)  that does not have a rainbow $3$-AP.  Let $A$ and $B$ denote the residue palettes modulo $2$ in $\Z_{2t}$ associated with the even and odd numbers, respectively. Then $|A\setminus B|\le 1$ and $|B\setminus A|\le 1$.
\end{prop}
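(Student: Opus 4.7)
The plan is to proceed by strong induction on $t$, with the base case $t=1$ trivial since $|A|=|B|=1$ in $\Z_2$. For the inductive step, suppose for contradiction that $|A\setminus B|\ge 2$, fix distinct colors $\alpha,\beta\in A\setminus B$, and pick even elements $x,y\in\Z_{2t}$ with $c(x)=\alpha$ and $c(y)=\beta$. The key observation is that any 3-AP of $\Z_{2t}$ containing both $x$ and $y$ has its third element equal to either $2y-x$ or $2x-y$ (both even), or to a midpoint $m$ satisfying $2m\equiv x+y\pmod{2t}$. Since $x+y$ is even, there are exactly two such midpoints, namely $m_0$ and $m_0+t$, and a short check using $x\ne y$ confirms that both give legitimate (non-degenerate) 3-APs.

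If $t$ is odd, then $m_0$ and $m_0+t$ have opposite parities, so one of them is odd and its color lies in $B$. Since $\alpha,\beta\notin B$, that color is distinct from both, producing a rainbow 3-AP and a contradiction. If $t$ is even, write $x=2a$ and $y=2b$; both midpoints then have the same parity as $a+b$ and are therefore odd precisely when $x\not\equiv y\pmod 4$. Hence, if some admissible pair can be chosen with $x\not\equiv y\pmod 4$, the same argument gives a rainbow 3-AP. Otherwise, every element colored either $\alpha$ or $\beta$ lies in a single coset $\equiv r\pmod 4$ for some fixed $r\in\{0,2\}$.

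In this remaining subcase, the plan is to reduce to a strictly smaller instance. The subgroup $2\Z_{2t}\subseteq\Z_{2t}$ is naturally isomorphic to $\Z_t$ via $2k\mapsto k$, and since 3-APs of $\Z_t$ correspond bijectively with 3-APs of $\Z_{2t}$ whose elements all lie in $2\Z_{2t}$, the induced coloring $c'$ of $\Z_t$ inherits the property of having no rainbow 3-AP. Writing $t=2u$, so that $\Z_t=\Z_{2u}$, the even residue class of $\Z_{2u}$ corresponds to elements $\equiv 0\pmod 4$ in $\Z_{2t}$, and the odd residue class to elements $\equiv 2\pmod 4$. When $r=0$, both $\alpha$ and $\beta$ lie in $A'\setminus B'$; when $r=2$, both lie in $B'\setminus A'$. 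Either case contradicts the inductive hypothesis applied to $\Z_{2u}$ (valid since $u<t$). The bound $|B\setminus A|\le 1$ then follows by the same argument applied to the shifted coloring $\tilde c(x):=c(x+1)$, which swaps the roles of even and odd.

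The main obstacle is the case $t$ even, where a single midpoint-parity check no longer suffices because both midpoints of an even-even pair can themselves be even. The remedy is that this very failure forces the $\alpha$- and $\beta$-colored elements to concentrate in a single coset modulo $4$, at which point the problem descends cleanly to $\Z_t$ and induction finishes the job. Setting up the precise correspondence between 3-APs of $\Z_{2u}$ and 3-APs of $\Z_{4u}$ supported on the even elements is the main piece of bookkeeping, but is routine.
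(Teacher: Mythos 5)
Your proof is correct, but it takes a genuinely different route from the paper's. The paper's argument is a single extremal step with no induction and no parity casework: among all pairs of even elements $2m_1 < 2m_3$ carrying the two colors $\alpha,\gamma\in A\setminus B$, pick the pair with $m_3-m_1$ minimum (as integers in $\{0,\dots,t-1\}$); the integer midpoint $n_2=m_1+m_3$ completes a $3$-AP with those two endpoints, so its color must lie in $\{\alpha,\gamma\}$, forcing $n_2$ to be even, say $n_2=2m_2$ with $m_1<m_2<m_3$ --- but then one of $(m_1,m_2)$, $(m_2,m_3)$ gives a strictly closer pair carrying $\{\alpha,\gamma\}$, a contradiction. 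Note that the case where $m_1+m_3$ is odd is absorbed silently: the midpoint is then odd, its color is in $B$, and the $3$-AP is immediately rainbow. Your proof replaces this with induction on $t$ and an explicit parity split: when $t$ is odd, one of the two midpoints of $x,y$ is automatically odd, giving a rainbow $3$-AP directly; when $t$ is even, you argue that either some $\alpha/\beta$-pair gives an odd midpoint, or else both color classes concentrate in a single coset mod $4$, and the coloring descends via $2\Z_{2t}\cong\Z_t$ to a smaller instance where the inductive hypothesis gives the contradiction. Both are valid. What the paper's approach buys is uniformity and brevity --- it works simultaneously for all $t$ and needs no inductive scaffolding. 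What your approach buys is a structural observation (the concentration mod $4$ and descent) that is illuminating in its own right, but it does require the extra bookkeeping you flag: the non-degeneracy check for both midpoints, the transitivity argument that forces a single residue class mod $4$, and the exact dictionary between $3$-APs of $\Z_t$ and $3$-APs of $\Z_{2t}$ supported on even residues. You may find it instructive to note that the paper's minimality argument is really what makes the ``hard'' subcase (both midpoints even) disappear: instead of descending, it just observes that an even midpoint would contradict minimality.
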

\bpf
It suffices to show that $|A\setminus B|\le 1$ for every such coloring $c$ because if $|B\setminus A|\geq 2$, then the coloring defined by the rotation $c'(x):=c(x+1)$ has the roles of $A$ and $B$ reversed. Suppose not, so there exist two colors $\alpha, \gamma$ that appear only in $A$.
Let $n_1=2m_1$ and $n_3=2m_3$ be even elements such that $c(n_1)=\alpha$ and $c(n_3)=\gamma$.
We can select $m_1$ and $m_3$ such that $0 \leq m_1 < m_3 < t$. Performing arithmetic in the integers, we can choose $m_3-m_1$ to be minimum with respect to the fact that the set of colors  $\{c(2m_1), c(2m_3)\}$ is $\{\alpha,\gamma\}$.
Let $n_2 = m_1+m_3$ and observe that $\{n_1, n_2, n_3\}$ is a 3-AP and hence is not rainbow.
Therefore, $n_2$ must have the color $\alpha$ or $\gamma$ and thus is even.
However, this implies that $n_2 = 2m_2$ and $m_1 < m_2 < m_3$, while one of the sets of colors  $\{c(2m_1),c(2m_2)\}$ or $\{c(2m_2),c(2m_3)\}$ is  $\{\alpha,\gamma\}$, so one of the pairs $(m_1,m_2)$, $(m_2,m_3)$ violates our extremal choice.
\epf

\begin{prop}\label{upperpd2mt}
Let $m$ and $s$ be positive integers with  $s$ odd. Then
\[
	\aw(\Z_{2^m s},3) \le \aw(\Z_s, 3) +1.
\]
\end{prop}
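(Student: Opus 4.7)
My plan is to argue by contradiction: suppose $c$ is an exact $r$-coloring of $\Z_{2^m s}$ with $r=\aw(\Z_s,3)+1$ and no rainbow $3$-AP. Proposition~\ref{two} tells us every residue palette $P_i$ modulo $s$ has $|P_i|\in\{1,2\}$ and all size-two palettes share a common color $\gamma$. Following the template of Proposition~\ref{upperpdoddt}, I would reindex so that $|P_0|$ is maximum, write $P_0=\{\gamma,\beta_0\}$ (assuming $|P_0|=2$), and define an induced coloring $\hat c$ on $\Z_s$ by $\hat c(i)=\alpha$ (a fresh label) when $P_i\subseteq P_0$ and $\hat c(i)=$ the unique element of $P_i\setminus P_0$ otherwise. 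Proposition~\ref{two} makes this well-defined, and a direct count shows $\hat c$ uses at least $r-1=\aw(\Z_s,3)$ distinct colors, so $\hat c$ contains a rainbow $3$-AP $\{x,y,z\}$ in $\Z_s$ with middle $y$.

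The bulk of the work is lifting $\{x,y,z\}$ to a rainbow $3$-AP in $\Z_{2^m s}$, producing the contradiction. I would split on the appearance of $\alpha$ among $\hat c(x),\hat c(y),\hat c(z)$ (at most one vertex carries it). If no vertex carries $\alpha$, I would work in the CRT decomposition $\Z_{2^m s}\cong\Z_{2^m}\times\Z_s$ and examine the subsets $\Phi_i=\{a\in\Z_{2^m}:c((a,i))\in P_0\}$; the non-existence of a rainbow lift would require that for every $(a,d)\in\Z_{2^m}^2$ at least two of $a\in\Phi_x$, $a+d\in\Phi_y$, $a+2d\in\Phi_z$ hold, which is impossible for non-trivial proper $\Phi_i$'s. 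If $\alpha$ sits at an endpoint, say $\hat c(x)=\alpha$, I would fix $n_y\in R_y$ and $n_z\in R_z$ achieving colors $\hat c(y),\hat c(z)$ and set $n_x=2n_y-n_z$; since $P_x\subseteq P_0$ and $\hat c(y),\hat c(z)\notin P_0$, the color of $n_x$ is distinct from both, completing the rainbow lift.

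The main obstacle is the sub-case $\hat c(y)=\alpha$: the middle vertex carries the distinguished label. The parallel step in Proposition~\ref{upperpdoddt} used that $2$ is invertible modulo $st$, but $2$ is not invertible in $\Z_{2^m s}$, so the direct lift can fail when the desired color classes in $R_x$ and $R_z$ have incompatible parities. My plan is to handle this by induction on $m$ combined with Proposition~\ref{MYDSone} on the decomposition $\Z_{2^m s}=E\sqcup O$ into even and odd elements with palettes $A$ and $B$. The base case $m=1$ is immediate: $E\cong O\cong\Z_s$ forces $|A|,|B|\le\aw(\Z_s,3)-1=r-2$, and Proposition~\ref{MYDSone} gives $|A\cup B|\le|A|+|B\setminus A|\le r-1$, contradicting $|A\cup B|=r$. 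For the inductive step $m\ge 2$, the hypothesis yields $|A|,|B|\le r-1$; Proposition~\ref{MYDSone} then forces $|A|=|B|=r-1$ with exactly one color exclusive to each parity, and a sub-case analysis on where $\gamma$ sits relative to this exclusivity either produces a rainbow $3$-AP using matched-parity elements of $R_x$ and $R_z$ or yields a structural contradiction with Proposition~\ref{two}.
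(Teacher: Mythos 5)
Your plan begins by mimicking the $\hat c$-template of Proposition~\ref{upperpdoddt} (reducing to a rainbow $3$-AP in an induced coloring of $\Z_s$ and lifting it), and you correctly diagnose that the one case this template cannot handle directly is the middle-$\alpha$ case, because $2$ is not invertible in $\Z_{2^m s}$. You then propose to patch that single case by switching to induction on $m$ together with Proposition~\ref{MYDSone}. The paper's actual proof does not use the $\hat c$-template at all: it runs entirely inside the induction-on-$m$ framework. After establishing $|A|=|B|=r-1$, it lets $\alpha$ and $\beta$ be the colors exclusive to the even and odd parities, runs a chain argument (repeatedly replacing the end of a $3$-AP spanning $R_u$ and $R_{u+j}$ and using that $\alpha$ can only land on even elements, $\beta$ only on odd ones) to show $\alpha\in A_{u+2\ell j}$ and $\beta\in B_{u+(2\ell+1)j}$ for all $\ell$; since $s$ is odd this forces $P_{u+qj}=\{\alpha,\beta\}$, so Proposition~\ref{two}'s common color $\gamma$ is $\alpha$ or $\beta$; taking $\gamma=\alpha$ then gives $|B_i|=1$ for every $i$, and the induced coloring $i\mapsto B_i$ on $\Z_s$ uses $r-1=\aw(\Z_s,3)$ colors with no rainbow $3$-AP (a matched-parity lift through the odd elements checks this), which is the contradiction.

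The concrete gap in your proposal is in the sentence ``a sub-case analysis on where $\gamma$ sits relative to this exclusivity either produces a rainbow $3$-AP using matched-parity elements of $R_x$ and $R_z$ or yields a structural contradiction with Proposition~\ref{two}.'' The sub-case $\gamma\in\{\alpha,\beta\}$ does indeed yield matched-parity lifts (one checks, as you anticipate, that $B_x$ and $B_z$ are singletons equal to $\hat c(x)$ and $\hat c(z)$ when $\gamma=\alpha$), but the sub-case $\gamma\notin\{\alpha,\beta\}$ does \emph{not} contradict Proposition~\ref{two} on its own: nothing in Proposition~\ref{two} stops $\gamma$ from being a third color living in $A\cap B$ with every size-two palette of the form $\{\gamma,\delta\}$. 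Ruling this out requires exactly the chain argument above, which produces a residue palette containing both $\alpha$ and $\beta$; since palettes have size at most two, $\gamma$ must be one of them. Without that chain argument your sub-case analysis is unsupported, and since that is the crux of why the odd-modulus argument of Proposition~\ref{upperpdoddt} does not simply transfer, this is a real gap rather than a routine detail. Once you supply the chain argument, your matched-parity lifting does complete the middle-$\alpha$ case, but at that point you have essentially reconstructed the paper's proof plus extra scaffolding; you may as well drop the $\hat c$-template entirely and, as the paper does, derive the contradiction directly from $|B|=r-1$ together with the fact that all $B_i$ are singletons.
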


\begin{proof} The result is immediate for $s=1$ because Theorem~\ref{JLMNR3.5} gives that $\aw(\Z_{2^m},3)=3$ and because $\aw(\Z_s,3)=s+1$ for $s<3$, so assume $s\ge 3$. We proceed by induction on $m$.
Suppose $c$ is an exact $r$-coloring of  $\Z_{2^m s}$ with $r = \aw(\Z_s, 3) +1$ that does not have a rainbow 3-AP.   Let $A$ and $B$ denote the residue palettes of  the even and odd numbers, respectively.  By Proposition~\ref{MYDSone}, $|A\setminus B|\le 1$ and $|B\setminus A|\le 1$, so $|B|\ge r-1$ and $|A|\ge r-1$.
  The base case $m=1$ is then immediate, because   the coloring of the even  numbers of $\Z_{2s}$ induces a coloring of $\Z_{s}$, so a rainbow 3-AP necessarily exists, producing a contradiction.

Now consider $m>1$.  As usual $R_i, i=0,\dots,s-1$, are the residue classes modulo $s$ of $\Z_{2^m s}$ and $P_i, i=0,\dots,s-1$, are the residue  palettes. Recall that by Proposition~\ref{two}, $1 \le |P_i| \le 2$ for all $i$.   For $0 \le i \le s-1$,
let $A_i=P_i\cap A$ be the colors appearing on the even numbers in $R_i$, and let $B_i=P_i\cap B$ be the colors appearing on the odd numbers in $R_i$.
Thus, $P_i = A_i \cup B_i$, $A=\bigcup_{i=0}^{s-1} A_i$, and $B=\bigcup_{i=0}^{s-1} B_i$.
We claim that  $|A| = |B| =r-1$.
To see this,
observe that the even elements induce a coloring of $\Z_{2^{m-1} s}$, so if $|A| = r$, then a rainbow 3-AP necessarily exists, since $r \geq \aw(\Z_{2^{m-1}s},3) $ by the induction hypothesis.  Thus $|A| \le r-1$, and so $|A|= r-1$.  The proof that $|B|=r-1$ is analogous.

Since $|A| = |B| =r-1$, there exist colors $\alpha,\beta$ such that $A \setminus B = \{\alpha\}$ and $B\setminus A = \{\beta\}$.
Assume $\alpha \in A_u$ and $\beta \in B_v$.
Let $j = v-u$, hence $\beta \in B_{u+j} = B_v$.  Since there is no rainbow 3-AP, $u+2j$ must have a color in palette $A$, $\alpha \in A_{u+2j}$, which then implies $\beta \in B_{u+3j}=B_{v+2j}$.
Iterating this process gives that $\alpha \in A_{u+2\ell j}$ and $\beta \in B_{v+2\ell j}$ for all $\ell \geq 0$.
Since $s$ is odd, we have that for all $q\ge 0$, $A_{u+q j }$ is of the form $A_{u+2\ell j }$ for some $\ell$ and similarly, every $B_{u+qj}=B_{v+(q-1) j}$ is of the form $B_{v+2\ell j }$ for some $\ell$.
Therefore, $P_{u+q j } = \{\alpha,\beta\}$ for all $q \geq 0$.
By Proposition~\ref{two}, there is a common color for palettes of size two, and thus one of $\alpha$ or $\beta$ is this common color.
Without loss of generality, assume that $\alpha$ is the common color for all  palettes of size 2.
This implies that $|B_i| =1$ for all $0 \le i \le s-1$.
Hence, defining $\hat c(i)$ to be the unique color in $B_i$ defines an exact $(r-1)$-coloring of $\Z_s$ that avoids rainbow 3-APs.
However, $r - 1 = \aw(\Z_s, 3)$, a contradiction.
\end{proof}




Proposition~\ref{upperpd} is now established from Proposition~\ref{upperpdoddt} and Proposition~\ref{upperpd2mt}.  We now turn our attention to establishing the lower bound.  

	\begin{prop}\label{NW:singleton}
		Suppose $s$ is odd and $\Z_s$ has  a singleton extremal coloring.  Then for $t\ge 2$, \[ \aw(\Z_{st},3)\ge \aw(\Z_t,3) + \aw(\Z_s,3) - 2 .\]
	\end{prop}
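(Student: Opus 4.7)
The plan is to construct an explicit exact coloring of $\Z_{st}$ with $\aw(\Z_t,3)+\aw(\Z_s,3)-3$ colors and no rainbow $3$-AP, and then invoke (the $\Z_n$-version of) Proposition~\ref{thm:ezlem} to obtain the stated lower bound.

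Fix a singleton extremal coloring $c_s$ of $\Z_s$ with color palette $S$ of size $r_s:=\aw(\Z_s,3)-1$. Let $\sigma\in S$ be a color used exactly once; after a rotation we may assume $c_s^{-1}(\sigma)=\{0\}$. Fix an extremal coloring $c_t$ of $\Z_t$ with palette $T$ of size $r_t:=\aw(\Z_t,3)-1$, chosen to be disjoint from $S\setminus\{\sigma\}$. Identify the residue class $R_0=\{0,s,2s,\ldots,(t-1)s\}\subseteq\Z_{st}$ with $\Z_t$ via $sy\leftrightarrow y$, and define
\[
c(x)=\begin{cases}c_t(y),& \text{if }x=sy\in R_0,\\ c_s(x\bmod s),& \text{if }x\notin R_0.\end{cases}
\]
Exactness of $c_t$ together with the fact that each nonzero residue class mod $s$ is nonempty in $\Z_{st}$ ensures $c$ uses exactly $r_t+(r_s-1)=\aw(\Z_t,3)+\aw(\Z_s,3)-3$ colors.

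The verification that $c$ has no rainbow $3$-AP is a case analysis on a $3$-AP $\{a,a+d,a+2d\}$ in $\Z_{st}$ according to the three residues $\bar a,\bar a+\bar d,\bar a+2\bar d$ in $\Z_s$. When $\bar d\equiv0\pmod s$: if $\bar a=0$, all three elements lie in $R_0$ and, writing $d=sd'$, correspond under the identification $R_0\leftrightarrow\Z_t$ to a $3$-AP $\{y_1,y_1+d',y_1+2d'\}$ in $\Z_t$ (distinct because the original elements are), which is non-rainbow under $c_t$; if $\bar a\neq 0$, all three elements receive the common color $c_s(\bar a)$. When $\bar d\not\equiv 0\pmod s$, the oddness of $s$ forces $2\bar d\not\equiv 0$, so the three residues are distinct and form a genuine $3$-AP in $\Z_s$. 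Since $c_s$ has no rainbow $3$-AP, some pair of these residues receives the same $c_s$-color; because $\sigma$ is used only at $0$, such a pair must consist of two nonzero residues. The same oddness of $s$ rules out the possibility that exactly two of the residues equal $0$, so the remaining sub-cases are: all three residues nonzero, in which case two elements inherit a common $c_s$-color directly; or exactly one residue equals $0$, in which case the other two elements lie outside $R_0$ and inherit that common $c_s$-color under $c$.

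The main obstacle is this last sub-case, where one of the three elements is colored by $c_t$ (with colors disjoint from those used on $\Z_{st}\setminus R_0$) and the other two are colored by $c_s$: it is precisely here that the singleton hypothesis is essential, since it forces the unique non-rainbow coincidence in $\Z_s$ to occur among the two non-origin residues, and hence to propagate to a color coincidence in $\Z_{st}$. Once all cases are dispatched, Proposition~\ref{thm:ezlem} (applied with $S=\Z_{st}$) yields $\aw(\Z_{st},3)\ge (r_t+r_s-1)+1=\aw(\Z_t,3)+\aw(\Z_s,3)-2$.
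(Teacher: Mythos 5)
Your proposal is correct and takes essentially the same approach as the paper: put a rainbow-free coloring of $\Z_t$ on the residue class $R_0$ (deleting the singleton color of $c_s$) and use $c_s$ on the other residue classes, then use the oddness of $s$ to force any 3-AP not contained in a single residue class to hit three distinct residues, reducing to the non-rainbow property of $c_s$ (with the singleton hypothesis handling the sub-case where one of those residues is $0$). Your write-up spells out the case analysis more explicitly than the paper's somewhat terse version, but the construction and the logic are identical.
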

	\bpf Let $c_s$ be a singleton extremal coloring of $\Z_s$.
		Note that we can shift $c_s$ so that $c_s(0)$ is the color that is used exactly once.  Choose a coloring $c_t$ of $\Z_t$ using $\aw(\Z_t,3)-1$ colors not used by $c_s$ that does not have a rainbow 3-AP.  Let $R_0,R_1,\dots,R_{s-1}$ be the residue classes modulo $s$ in $\Z_{st}$.  Define a coloring $\hat c$ of $\Z_{st}$ as follows:  For $i=1,\dots,s-1$ and $\ell \in R_i$, $\hat c(\ell):=c_s(i)$, and for $0\le j\le t-1$,  $\hat c(js):= c_t(j)$.
				Notice that we now have an exact $\aw(\Z_s,3) - 2 + \aw(\Z_t,3) - 1$ coloring of $\Z_{st}$ because we have removed color $c_s(0)$.  Clearly, if a 3-AP is within some residue class it is not rainbow.  Because $s$ is odd, $d\not\equiv 0\pmod s$ implies $2d\not\equiv 0\!\pmod s$ and $2d\not\equiv d\!\pmod s$, so a 3-AP that is not entirely within one residue class has elements in three different residue classes.  But a rainbow 3-AP with elements in three different residue classes would imply a rainbow 3-AP in $c_s$,  which does not exist.  So we have found a coloring of $\Z_{st}$ using $\aw(\Z_t,3) + \aw(\Z_s,3) - 3$ colors that does not have a rainbow 3-AP.  Thus $  \aw(\Z_{st},3)\ge \aw(\Z_t,3) + \aw(\Z_s,3) - 2$.
	\epf

\begin{cor}\label{LH:Zn3primefactor3} For an  integer $n\ge 2$, 	\[ \aw(\Z_n,3) = 2+f_2(n)+  f_3(n) +2f_4(n).\]
\end{cor}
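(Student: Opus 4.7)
The plan is to prove the formula by establishing matching upper and lower bounds, both by induction on $\Omega(n)$, the number of prime factors of $n$ counted with multiplicity. The bookkeeping is straightforward: peeling off an odd prime $p$ with $\aw(\Z_p,3)=3$ increases the target by $1$, and one with $\aw(\Z_p,3)=4$ increases it by $2$.

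First, I would derive the upper bound. For an odd factor $m = p \cdot m'$, Proposition~\ref{upperpd} applied with both arguments odd gives $\aw(\Z_m,3) \le \aw(\Z_p,3) + \aw(\Z_{m'},3) - 2$, and a short induction on $\Omega(m)$ with base case $\aw(\Z_p,3)\in\{3,4\}$ (Proposition~\ref{newsingleton}) yields $\aw(\Z_m,3) \le 2 + f_3(m) + 2f_4(m)$. For $n = 2^a m$ with $a \ge 1$ and $m$ odd, one more application of Proposition~\ref{upperpd} with $s = m$ and $t = 2^a$ (valid since $t$ is a power of two), combined with $\aw(\Z_{2^a},3) = 3$ from Theorem~\ref{JLMNR3.5}, gives $\aw(\Z_n,3) \le \aw(\Z_m,3) + 1$. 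The pure power-of-two case $n = 2^a$ is handled directly by Theorem~\ref{JLMNR3.5}.

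For the lower bound the natural tool is Proposition~\ref{NW:singleton}, but its hypothesis requires a singleton extremal coloring. The central preliminary is therefore the following strengthened claim, proved by induction on $\Omega(m)$: for every odd $m \ge 3$, one has $\aw(\Z_m,3) = 2 + f_3(m) + 2f_4(m)$ and $\Z_m$ admits a singleton extremal coloring. The base case $m = p$ prime is the corollary of Proposition~\ref{newsingleton}. For the inductive step $m = p \cdot m'$ with $p$ prime and $m' \ge 3$ odd, let $c_p$ and $c_{m'}$ be singleton extremal colorings of $\Z_p$ and $\Z_{m'}$ (the latter by the inductive hypothesis). Applying the construction in the proof of Proposition~\ref{NW:singleton} with $s = p$, $t = m'$, $c_s = c_p$, $c_t = c_{m'}$ produces a coloring $\hat c$ of $\Z_m$ using $\aw(\Z_p,3) + \aw(\Z_{m'},3) - 3$ colors with no rainbow $3$-AP. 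Together with the upper bound for $\aw(\Z_m,3)$ and Proposition~\ref{thm:ezlem}, this forces $\aw(\Z_m,3) = \aw(\Z_p,3) + \aw(\Z_{m'},3) - 2$ and shows $\hat c$ is extremal. Because the colors used by $c_{m'}$ are chosen disjoint from those of $c_p$, the singleton color of $c_{m'}$ survives in $\hat c$ as the unique color at the position $j_0 s$ (where $j_0$ is its location in $\Z_{m'}$), so $\hat c$ is itself a singleton extremal coloring, completing the inductive step.

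With singleton extremality established for all odd $m$, the remaining case $n = 2^a m$ with $a \ge 1$ and $m \ge 3$ odd follows from one final application of Proposition~\ref{NW:singleton}, now with $s = m$ and $t = 2^a \ge 2$, giving $\aw(\Z_n,3) \ge \aw(\Z_m,3) + 1$; combined with the upper bound this completes the case. The residual cases $n = 2^a$ and $n$ prime are immediate. The main obstacle is propagating the singleton property through odd composites, which unlocks Proposition~\ref{NW:singleton} in the even case.
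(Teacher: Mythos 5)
Your proof is correct, and it recovers the statement by the same fundamental toolkit as the paper (Propositions~\ref{newsingleton}, \ref{upperpd}, \ref{NW:singleton}), but it organizes the peeling differently and consequently does extra work. The paper's lower-bound argument applies Proposition~\ref{NW:singleton} always with $s$ equal to an odd \emph{prime} factor and $t = n/s$ the remaining composite, so the only singleton-extremality fact ever needed is the one for primes (the corollary just after Proposition~\ref{newsingleton}); iterating this peels all the odd primes off and terminates at $\Z_{2^a}$ (or a single prime), where the base values $\aw(\Z_{2^a},3)=3$, $\aw(\Z_p,3)\in\{3,4\}$ close the argument. You instead insist on applying Proposition~\ref{NW:singleton} in the even case with $s = m$ the \emph{entire odd part} and $t = 2^a$, which forces you to first establish that every odd composite $\Z_m$ has a singleton extremal coloring. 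That propagation step (tracking the singleton color of $c_{m'}$ through the construction of Proposition~\ref{NW:singleton}) is carried out correctly, but it is content the paper defers to Remark~\ref{MYrem} precisely because the corollary itself doesn't need it. Your route buys you Remark~\ref{MYrem} as a byproduct; the paper's route is shorter because it never passes singleton extremality through a composite. Both are valid, and your use of Proposition~\ref{thm:ezlem} to show that the constructed $\hat c$ is in fact extremal once the matching upper bound is in hand is the right way to justify the inductive hypothesis on singleton extremality.
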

\bpf  By Proposition~\ref{newsingleton}, every odd prime factor $p$ has $3\le \aw(\Z_p,3)\le 4$.  Apply Proposition \ref{upperpd}, removing one odd prime $s$ at a time and observing that for $\aw(\Z_s,3)=3$, $\aw(\Z_s,3)-2$ adds one to the total, whereas for $\aw(\Z_s,3)=4$, $\aw(\Z_s,3)-2$ adds two to the total.  Thus $\aw(\Z_n,3)\le 2+f_2(n)+  f_3(n) +2f_4(n)$.  For the reverse inequality, suppose $p$ is an odd prime. Then  every extremal  coloring of $\Z_p$ is a singleton coloring by Proposition~\ref{newsingleton}.  So we can apply Proposition~\ref{NW:singleton} to remove one odd prime at a time to show that $\aw(\Z_n,3)=2+f_2(n)+f_3(n)+2f_4(n)$.  \epf

\begin{rem}\label{MYrem}  The constructive proof of Proposition~\ref{NW:singleton} gives a singleton extremal coloring of $\Z_n$ from the singleton extremal colorings of the prime factors of $n$.  Since $\Z_{2^m}$ has the singleton extremal coloring $c(0)=1$ and $c(i)=2$ for every $i\not\equiv 0\pmod {2^m}$, every positive integer has a singleton extremal coloring.
\end{rem}

\begin{prop}\label{LH:Zn3primeval}    For all primes $p< 100$,
	$\aw(\Z_p,3) = 3$ if $p\notin Q_4:=\{17, 31, 41, 43, 73, 89,  97\}$ and $\aw(\Z_p,3) = 4$ if $p\in Q_4$.  
\end{prop}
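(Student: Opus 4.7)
\medskip\noindent\textbf{Proof plan for Proposition~\ref{LH:Zn3primeval}.} The approach combines Proposition~\ref{newsingleton} with Theorem~\ref{JLMNR3.5} to reduce the claim to a finite, purely multiplicative check about the order of $2$ modulo $p$. By Proposition~\ref{newsingleton}, every odd prime $p$ satisfies $\aw(\Z_p,3)\in\{3,4\}$, so it suffices to decide, for each odd prime $p<100$, which of these two values occurs. By Theorem~\ref{JLMNR3.5}, for an odd prime $p$ the value $\aw(\Z_p,3)$ equals $3$ if and only if at least one of
\begin{enumerate}
\item[(b)] $2$ is a primitive root modulo $p$ (equivalently $\operatorname{ord}_p(2)=p-1$), or
\item[(c)] $\frac{p-1}{2}$ is odd and $\operatorname{ord}_p(2)=\frac{p-1}{2}$
\end{enumerate}
holds, and equals $4$ otherwise.

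The plan is therefore to compute $\operatorname{ord}_p(2)$ for each odd prime $p<100$ and record the outcome. First I would handle the primes in the candidate set $Q_4=\{17,31,41,43,73,89,97\}$ by verifying that in each case neither (b) nor (c) holds, which forces $\aw(\Z_p,3)=4$. For instance, $\operatorname{ord}_{17}(2)=8$ (since $2^8=256\equiv 1\pmod{17}$), and $8\neq 16$ while $\frac{17-1}{2}=8$ is even, so (b) and (c) both fail; one checks analogously that $\operatorname{ord}_{p}(2)\in\{5,20,14,9,11,48\}$ for $p=31,41,43,73,89,97$ respectively, each time with $\frac{p-1}{2}$ even or not matching the order.

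Next I would handle the remaining odd primes $p<100$, i.e.\ the set $Q_3$, by exhibiting in each case one of conditions (b) or (c). Specifically, $2$ is a primitive root modulo $p$ for $p\in\{3,5,11,13,19,29,37,53,59,61,67,83\}$, and for $p\in\{7,23,47,71,79\}$ one has $\frac{p-1}{2}$ odd with $\operatorname{ord}_p(2)=\frac{p-1}{2}$ (e.g.\ $2^3\equiv 1\!\pmod 7$, $2^{11}\equiv 1\!\pmod{23}$, $2^{23}\equiv 1\!\pmod{47}$, $2^{35}\equiv 1\!\pmod{71}$, $2^{39}\equiv 1\!\pmod{79}$). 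Theorem~\ref{JLMNR3.5} then gives $\aw(\Z_p,3)=3$ for every $p\in Q_3$, completing the proof.

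The step that is conceptually nontrivial has already been done (Theorem~\ref{JLMNR3.5}); what remains is entirely computational, and the only ``obstacle'' is organizational: presenting the order computations compactly, most naturally as a short table listing $p$, $p-1$, $\operatorname{ord}_p(2)$, and the condition of Theorem~\ref{JLMNR3.5} that is witnessed (or ruled out). No new ideas beyond Proposition~\ref{newsingleton} and Theorem~\ref{JLMNR3.5} are required.
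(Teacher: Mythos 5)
Your proof is correct, and it takes a genuinely different route from the paper. The paper's proof of Proposition~\ref{LH:Zn3primeval} is simply a citation of the brute-force computational verification described in Section~\ref{sec:compute} and recorded in Table~\ref{tab:znk3}: the algorithm searches over colorings of $\Z_p$ and reports $\aw(\Z_p,3)$. You instead invoke Theorem~\ref{JLMNR3.5} (the Jungi\'c et al.\ characterization of when $\aw(\Z_n,3)=3$) together with Proposition~\ref{newsingleton} (which pins $\aw(\Z_p,3)$ to $\{3,4\}$ for primes $p$) to replace the coloring search by a purely number-theoretic criterion: compute $\operatorname{ord}_p(2)$ and check whether it equals $p-1$, or equals $\tfrac{p-1}{2}$ with $\tfrac{p-1}{2}$ odd. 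I spot-checked your order computations (e.g.\ $\operatorname{ord}_{17}(2)=8$, $\operatorname{ord}_{97}(2)=48$, $\operatorname{ord}_{71}(2)=35$, $\operatorname{ord}_{79}(2)=39$, and primitivity of $2$ modulo $3,5,11,13,19,29,37,53,59,61,67,83$) and they are all correct. What your approach buys: it is far cheaper computationally (modular exponentiation rather than search over colorings), it scales to much larger primes, and it makes transparent \emph{why} each $p$ falls into $Q_3$ or $Q_4$. What the paper's approach buys: it verifies $\aw(\Z_p,3)$ directly without relying on the correctness of Theorem~\ref{JLMNR3.5}, and the same machinery simultaneously produces the data for all composite $n<100$ in Table~\ref{tab:znk3}, which is used elsewhere. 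Both are valid; yours is the more illuminating proof for the specific claim at hand.
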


\bpf  The statement that for any prime $p< 100$,
	$\aw(\Z_p,3) = 3$ if $p\notin Q_4$ and $\aw(\Z_n,3) = 4$ if $p\in Q_4$ has been verified computationally (see Table \ref{tab:znk3}). 
\epf

The next example illustrates the use of Corollary~\ref{LH:Zn3primefactor3} 
to compute $\aw(\Z_n,3)$ in the case that 
 every prime factor  of $n$ is less than $100$.

\begin{ex}\label{LH:newex}{Let $n=14,582,937,583,067,568$. Since $n= 2^4 \cdot 3\cdot 11^2\cdot 13\cdot 17^2\cdot 53^3\cdot 67^2$, $\aw(\Z_n,3) = 3+ 9+2\cdot 2=16$.}\end{ex}



\subsection{Main results for $\aw(\Z_n,k), k\ge 4$}\label{ssec:awZnk}

In this section, we specialize to the case where $k\geq 4$ and prove Theorem \ref{RM5}.
Corollary \ref{KH:Thm:littleohub} below, which follows from Corollary \ref{CEthm:upawboundone} and  Remark \ref{KH:Thm:ZnVSBracketn},  gives us $ne^{-\log\log\log n - \omega(1)}$ as an upper bound for $\aw(\Z_n,k)$.

\begin{cor}\label{KH:Thm:littleohub}
    For every fixed positive integer $k$, $\aw(\Z_n,k)=o\left(\frac{n}{\log\log n}\right)$.
\end{cor}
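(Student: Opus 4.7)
The plan is to derive Corollary~\ref{KH:Thm:littleohub} as an immediate consequence of two results that are already in the paper. First, by Remark~\ref{KH:Thm:ZnVSBracketn} we have the universal bound $\aw(\Z_n, k) \le \aw([n], k)$. The justification is routine: given an exact $r$-coloring of $\Z_n$ with $r \ge \aw([n], k)$, lift it to an exact $r$-coloring of $[n]$ via the natural bijection; by the definition of $\aw([n], k)$, this coloring contains a rainbow $k$-AP in $[n]$, and since its terms already lie in $[n]$ they are also distinct modulo $n$, so the same progression is a rainbow $k$-AP of $\Z_n$ under the original coloring.

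Second, Corollary~\ref{CEthm:upawboundone} already asserts that for every fixed positive integer $k$,
\[
\aw([n], k) \;=\; o\!\left(\frac{n}{\log\log n}\right).
\]
Chaining the two bounds yields
\[
\aw(\Z_n, k) \;\le\; \aw([n], k) \;=\; o\!\left(\frac{n}{\log\log n}\right),
\]
which is exactly the required conclusion. There is no substantive obstacle: the difficult ingredient (the appeal to Gowers's theorem in the proof of Corollary~\ref{CEthm:upawboundone}) and the monotonicity inequality between $\aw(\Z_n,k)$ and $\aw([n],k)$ have already been carried out. I would therefore write this as essentially a one-line proof that quotes Remark~\ref{KH:Thm:ZnVSBracketn} followed by Corollary~\ref{CEthm:upawboundone}, noting only that the $o(\,\cdot\,)$ asymptotic bound is preserved under the inequality because $\aw([n],k)$ is nonnegative.
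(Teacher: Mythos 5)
Your argument is correct and matches the paper's intended derivation exactly: the paper explicitly states that Corollary~\ref{KH:Thm:littleohub} follows from Remark~\ref{KH:Thm:ZnVSBracketn} together with Corollary~\ref{CEthm:upawboundone}, which is precisely the chain of inequalities you have written out. Nothing further is needed.
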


Our lower bound for $\aw(\Z_n,k)$ when $n>12$ is presented in Lemma~\ref{KH:Thm:BehrendtypelbZn}.

\begin{lem}\label{KH:Thm:BehrendtypelbZn}
    There exists an absolute constant $b>0$ such that for all $c>3$, $\frac{n}{c}\geq4$ and $k\geq 4$,
    \[
        \aw(\Z_n,k) > \left(\frac{n}{c}\right)e^{-b\sqrt{\log\left(n/c\right)}} =ne^{-b \sqrt{\log(n/c)} - \log c} = n^{1-o(1)}.
    \]
\end{lem}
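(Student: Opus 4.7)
By Observation~\ref{thm:awnmonok} it suffices to establish the lower bound for $k = 4$. The idea is to transplant the Behrend-type coloring of Lemma~\ref{CEthm:Behrendbound} into $\Z_n$, placing the colorful set inside a sufficiently small initial segment so that any $4$-AP of $\Z_n$ meeting the segment in three points can be lifted to an honest $4$-AP of $[n]$.

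Set $m=\lfloor n/c\rfloor$; the hypotheses $c>3$ and $n/c\ge 4$ give $m<n/3$ and $m\ge 4$. Invoking Theorem~\ref{lma:behrendsize} together with Propositions~\ref{lma:behrend3ap} and \ref{lma:nopunctured4ap}, fix a set $S\subseteq[m]$ with $|S|\ge m\,e^{-b\sqrt{\log m}}$ for an absolute constant $b>0$, such that $S$ contains neither a $3$-AP nor a punctured $4$-AP of $[n]$. Viewing $S\subseteq\Z_n$, color $\Z_n$ by assigning each element of $S$ its own color and giving all elements of $\Z_n\setminus S$ a common additional ``zero'' color, for a total of $|S|+1$ colors. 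If this coloring admits no rainbow $4$-AP, then Proposition~\ref{thm:ezlem} gives
\[
\aw(\Z_n,4)\ge |S|+2>(n/c)\,e^{-b\sqrt{\log(n/c)}},
\]
which is the desired bound.

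The core of the proof is the following lifting claim: whenever a $4$-AP $\{a,a+d,a+2d,a+3d\}$ of $\Z_n$ contains three distinct elements of $\{1,\dots,m\}$, those three elements---written as $s_1<s_2<s_3$---form either a $3$-AP or a punctured $4$-AP of $[n]$. To prove it I would consider, for each choice of index $i\in\{0,1,2,3\}$ declared ``missing'' from $S$ and each matching of the three remaining positions to $s_1,s_2,s_3$, the modular linear relation $\alpha_1 s_1+\alpha_2 s_2+\alpha_3 s_3\equiv 0\pmod n$ forced by the AP structure. The coefficients $\alpha_j$ are always integers of absolute value at most $3$, so $|\alpha_1 s_1+\alpha_2 s_2+\alpha_3 s_3|<3m<n$, and the congruence promotes to an equality in $\Z$. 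Inspecting each case shows that every non-contradictory assignment either produces an honest $3$-AP $s_1,s_2,s_3$ in $[n]$ or a punctured $4$-AP of $[n]$ whose missing term lies in $[1,2m]\subseteq[n]$---both possibilities excluded by the choice of $S$. Because at most one term of any rainbow $4$-AP can be the zero color, the lifting claim rules out rainbow $4$-APs entirely.

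The main obstacle I anticipate is the tedious bookkeeping of the case analysis: every permutation matching the three labels $s_1<s_2<s_3$ to positions in the cyclic $4$-AP must be checked, and degenerate $4$-APs in $\Z_n$ (those whose common difference lies in $\{0,n/2\}$ and so fail to have four distinct terms) must be discarded at the outset. The role of the parameter $c>3$ is critical: it is precisely what makes the bound $m<n/3$ available, which in turn is what allows the mod-$n$ relations among $s_1,s_2,s_3$ to be promoted to identities in $\Z$ and thus to witness a forbidden configuration in the Behrend set.
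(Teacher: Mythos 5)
Your proposal is correct and is essentially the approach the paper takes (via its Proposition~\ref{KH:Lem:punc4APZn}): place the punctured-$4$-AP-free Behrend set inside the short initial segment $\left[\lfloor n/c\rfloor\right]$, color it rainbow with one extra ``zero'' color on the complement, and show that three surviving terms of any mod-$n$ $4$-AP already lie on an integer $4$-AP, contradicting the Behrend property; the bound on $\aw(\Z_n,4)$ then transfers to all $k\ge 4$ by monotonicity. The paper avoids your permutation casework by a single WLOG (reverse the progression so that $a_3,a_4\in S$) and then checks $a_2\equiv a_3-d$ and $a_1\equiv a_3-2d$ with $|d|\le n/c$, which is exactly your linear relations $a_2-2a_3+a_4\equiv 0$ and $a_1-3a_3+2a_4\equiv 0$ in disguise. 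One point in your sketch should be tightened: the inequality $|\alpha_1 s_1+\alpha_2 s_2+\alpha_3 s_3|<3m$ does \emph{not} follow from $|\alpha_j|\le 3$ alone (that would only give $\le 9m$); it relies on the coefficients summing to zero, so the extremal value is $(m-1)\sum_{\alpha_j>0}\alpha_j\le 3(m-1)$. That is precisely where the hypothesis $c>3$, i.e.\ $3m<n$, is used, and is the quantitative content of the paper's ``$2|d|\le 2n/c<(c-1)n/c$'' step.
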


Lemma~\ref{KH:Thm:BehrendtypelbZn} is proven using the Behrend construction from Section \ref{ssec:awnk} and using Proposition~\ref{KH:Lem:punc4APZn} below.  The Behrend construction in the integers $\{1,\ldots,m\}$ has no punctured 4-AP and size $me^{-b\sqrt{\log m}}$ for some absolute constant $b$.

\begin{prop}\label{KH:Lem:punc4APZn}
    Let $c>3$ be a real number, and let $\left[ \frac{n}{c}\right]$ denote the first $\lfloor \frac{n}{c}\rfloor$ consecutive residues in $\Z_n$.  Suppose $S\subseteq \left[ \frac{n}{c}\right]$ does not contain any punctured $4$-APs.  Then $\aw(\Z_n,k) > |S|+1$ for all $k\geq 4$.
\end{prop}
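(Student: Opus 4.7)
The plan is to mimic the proof of Proposition~\ref{lma:punctured4ap}, but with care for arithmetic progressions that wrap around in $\Z_n$. I would assign each element of $S$ its own distinct color and color every remaining element of $\Z_n$ with one new color (call it \emph{zero}), producing an exact $(|S|+1)$-coloring. By Proposition~\ref{thm:ezlem} combined with Observation~\ref{thm:awnmonok}, it then suffices to verify that this coloring contains no rainbow $4$-AP: if this holds, then $\aw(\Z_n,4) > |S|+1$ and hence $\aw(\Z_n,k) \ge \aw(\Z_n,4) > |S|+1$ for all $k\ge 4$.

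Set $M := \lfloor n/c \rfloor$, so the assumption $c > 3$ yields $M < n/3$, and identify $S$ with a subset of $\{0,1,\dots,M-1\} \subset \Z_n$. Suppose toward a contradiction that $\{a_0, a_1, a_2, a_3\}$ is a rainbow $4$-AP in $\Z_n$ with $a_{j+1} \equiv a_j + d \pmod{n}$. At most one $a_j$ is colored zero, so at least three of the $a_j$ lie in $S$; when these three are viewed as residues in $\{0,\dots,M-1\}$, any two of them differ by less than $M < n/3$ in absolute value as integers.

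The heart of the argument is to extract from these three residues an honest integer punctured $4$-AP contained in $S$, contradicting the hypothesis. I would case on which subset of $\{0,1,2,3\}$ indexes the elements of $S$. If three consecutive indices lie in $S$, say $a_0, a_1, a_2$, let $d_1 = a_1 - a_0$ and $d_2 = a_2 - a_1$ as integers; both have absolute value less than $M$, both satisfy $d_1 \equiv d_2 \equiv d \pmod{n}$, and $|d_1 - d_2| < 2M < n$ forces $d_1 = d_2$. Hence $\{a_0, a_1, a_2\}$ is a $3$-AP in $\Z$ and thus a punctured $4$-AP in $S$ (the case $\{a_1,a_2,a_3\}\subseteq S$ is symmetric). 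Otherwise, assume $\{a_0, a_1, a_3\} \subseteq S$ (the case $\{a_0, a_2, a_3\} \subseteq S$ being symmetric). Set $d_1 = a_1 - a_0 \in \Z$, so $d_1 \equiv d \pmod{n}$ and $|d_1| < M$. Then $a_3 - a_1 \equiv 2d_1 \pmod{n}$; since $|a_3 - a_1| < M$ and $|2d_1| < 2M$ give $|(a_3 - a_1) - 2d_1| < n$, this integer difference must equal $0$. Thus $a_3 = a_0 + 3 d_1$ in $\Z$, so $\{a_0, a_1, a_3\}$ is a genuine integer punctured $4$-AP in $S$ (missing $a_0 + 2 d_1$), the desired contradiction.

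The main obstacle is ensuring that the cyclic structure does not permit ``artificial'' cyclic punctured $4$-APs that fail to lift to integer punctured $4$-APs in $S$. The hypothesis $c > 3$, via the bound $M < n/3$, is exactly what is needed: it makes every relevant pairwise integer difference small enough to be pinned down uniquely by its residue modulo $n$, so the cyclic $4$-AP faithfully reconstructs an integer punctured $4$-AP in $S$ and produces the required contradiction.
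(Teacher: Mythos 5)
Your proof is correct and takes essentially the same approach as the paper: color $S$ rainbow plus one ``zero'' class, and use the bound $c>3$ to argue that the three $S$-colored terms of a would-be rainbow $4$-AP in $\Z_n$ have pairwise integer differences small enough to lift uniquely to an integer punctured $4$-AP inside $S$. The paper reduces by a WLOG to the last two indices lying in $S$ and then cases on the second index, while you enumerate the index sets directly and add the explicit monotonicity step from $k=4$ to general $k\ge 4$, but the underlying argument is the same.
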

\bpf
    Color each member of $S$ a distinct color, and color each member of $\Z_n \setminus S$ with a new color called \emph{zero}. Each $i\in\Z_n$ with $\frac{n}{c} < i < n$ will be colored zero.  If $K = \{a_1,a_2,a_3,a_4\}$ is a rainbow $4$-AP in $\Z_n$, then at most one element of $K$ is not in $S$.  Without a loss of generality, assume $K$ is ordered as $a_1, a_2, a_3, a_4$ and  $a_3,a_4 \in S$.  Then there exists $d\in\Z$ such that  $d \equiv a_4-a_3 \pmod n$  and $|d|\leq \frac{n}{c}$.

    Suppose $a_2\in S$.  Because $|d|\leq \frac{n}{c}<\frac{n}{2}$, we must have that $a_2,a_3,a_4$ is a $3$-AP in $\left[ \frac{n}{c}\right]$.  This contradicts the fact that $S$ contains no punctured $4$-APs, so we must have $a_2 \not\in S$ and $a_1\in S$.  However, since $2|d|\leq \frac{2n}{c}<\frac{(c-1)n}{c}$, we must have that $a_1,a_3,a_4$ is a punctured $4$-AP in $\left[ \frac{n}{c}\right]$.  This is a contradiction, so $a_1\not\in S$.

    This means that $K$ could not have been rainbow, so we have a $(|S|+1)$-coloring of $\Z_n$ with no rainbow $4$-APs.
\epf

We use the bound for the Behrend construction in Lemma~\ref{CEthm:Behrendbound} to obtain the bounds for $\aw(\Z_n,k)$, $k\geq 4$:
\[
    ne^{-b \sqrt{\log(n/c)} - \log c}< \aw(\Z_n,k) \leq ne^{-\log\log\log n - \omega(1)}.
\]
This completes the proof of Theorem \ref{RM5}.

\subsection{Additional results for $\aw(\Z_n,k)$}\label{ssec:awZnkmore}


 In this section, we present computed data for $\aw(\Z_n,k), k\ge 4$, establish the value of $\aw(\Z_n,k)$ for $k=n$, $n-1$, and $n-2$, and present some  examples that show some additional results fail to extend from $[n]$ to $\Z_n$.
 Table \ref{KH:Table:awznkcomputed} below lists the computed values of $\aw(\Z_n,k)$ for $k=4,\dots,n$ in the row labeled $n$.

\begin{table}[h!]
\centering
{\small
\begin{tabular}[h]{c |c c c c c c c c c c c c c c c c c c c}
$n\setminus k$ &  4 & 5 & 6 & 7 & 8 & 9 & 10 & 11 & 12 & 13 & 14 & 15 & 16 & 17 & 18 & 19\\
\hline
4& 4\\
5& 4& 5\\
6& 5& 5& 6\\
7& 4& 5& 6& 7\\
8& 6& 6& 7& 7& 8\\
9& 5& 6& 8& 8& 8& 9\\
10& 6& 8& 8& 8& 9& 9& 10\\
11& 5&  6&  7&  8&  9& 9& 10&11\\
12&8  &9 &10 &10 &11 &11 &11 &11&12\\
13&5 & 7&  8& 9& 10& 10& 11& 11& 12&13\\
14&6  &8 &10 &12 &12 &12 &12 &12 &13 &13&14 \\
15&8 &11& 12& 12& 12& 13& 14& 14& 14& 14& 14&15 \\
16& 8 &10 &10 &11 &14 &14 &14 &14 &15 &15 &15 &15&16 \\
17&6 & 8 &10& 11& 12& 12& 13& 14& 14& 15& 15& 15& 16&17 \\
18&8 &10 &13 &14 &14 &16 &16 &16 &17 &17 &17 &17 &17 &17&18 \\
19&6 & 9 &10 & 12 & 12 & 14 & 14 & 15 & 16 & 16 & 16 & 17 & 17 & 17 & 18 & 19
\end{tabular}
}
\caption{\label{KH:Table:awznkcomputed}Computed values of $\aw(\Z_n,k)$ for $k\geq 4$.}
\end{table}

Next we examine $\aw(\Z_n,k)$ for $k$ close to $n$.

\begin{prop} \label{SB1} For positive $n$ and $k$ we have $\aw(\Z_n,k)=n$ if and
only if $k=n$.
\end{prop}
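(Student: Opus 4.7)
The plan is to handle the two directions separately, using a clean structural observation about $(n-1)$-APs and $n$-APs in $\Z_n$.

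For the ``if'' direction ($k=n$): any $n$-AP $a, a+d, \ldots, a+(n-1)d$ in $\Z_n$ consists of $n$ distinct elements, and there are only $n$ elements in $\Z_n$, so as a set the $n$-AP must equal all of $\Z_n$. Hence an exact $r$-coloring of $\Z_n$ contains a rainbow $n$-AP if and only if $r=n$, which gives $\aw(\Z_n,n)=n$.

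For the ``only if'' direction, suppose $k<n$. The cases $k\in\{1,2\}$ are immediate from the definition, so assume $k\geq 3$. By iterated application of Observation~\ref{thm:awnmonok}, $\aw(\Z_n,k)\leq \aw(\Z_n,n-1)$, so it suffices to prove $\aw(\Z_n,n-1)\leq n-1$. The key observation is that every $(n-1)$-AP in $\Z_n$, viewed as a set, has the form $\Z_n\setminus\{x\}$ for some $x\in\Z_n$. Indeed, for an $(n-1)$-AP $a,a+d,\ldots,a+(n-2)d$ to have distinct members mod $n$, the common difference $d$ must have order at least $n-1$ in $\Z_n$, which (for $n\geq 3$) forces $\gcd(d,n)=1$; in that case $d$ generates $\Z_n$ and the $(n-1)$-AP is precisely $\Z_n\setminus\{a-d\}$. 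Conversely, for any $x\in\Z_n$ the consecutive run $x+1,x+2,\ldots,x+n-1$ realizes $\Z_n\setminus\{x\}$ as an $(n-1)$-AP.

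Now take any exact $(n-1)$-coloring of $\Z_n$. Since there are $n$ elements and $n-1$ colors, some color class consists of exactly two elements $\{a,b\}$ with $a\neq b$, and every other color class is a singleton. Then the $(n-1)$-AP $\Z_n\setminus\{a\}$ contains $b$ together with all singleton color classes, so its $n-1$ elements carry $n-1$ distinct colors — a rainbow $(n-1)$-AP. Hence every exact $(n-1)$-coloring of $\Z_n$ admits a rainbow $(n-1)$-AP, giving $\aw(\Z_n,n-1)\leq n-1<n$, as required.

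The argument is essentially a structural one, and no step looks technically hard; the main point is simply recognizing that the $(n-1)$-APs in $\Z_n$ are exactly the complements of singletons, which immediately forces rainbows under any $(n-1)$-coloring. Monotonicity in $k$ then does the rest, avoiding the need to construct an explicit rainbow-free coloring for each $k<n-1$.
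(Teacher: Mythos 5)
Your proof is correct, and the core insight is the same as the paper's: in any exact $(n-1)$-coloring of $\Z_n$ there is exactly one color used twice, say on $\{x,y\}$, and any $k$-AP avoiding $x$ is automatically rainbow. Where you diverge is in the packaging. You first invoke Observation~\ref{thm:awnmonok} to reduce to the single case $k=n-1$, then characterize the $(n-1)$-APs of $\Z_n$ as precisely the sets $\Z_n\setminus\{x\}$ and conclude by taking $\Z_n\setminus\{a\}$. The paper skips both steps: it simply exhibits, for each $k<n$, the consecutive run $\{x+1,x+2,\dots,x+k\}$, which is a $k$-AP (with $d=1$) that avoids $x$ and is therefore rainbow. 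This is shorter and uniform in $k$, requiring neither monotonicity nor the structural fact about $(n-1)$-APs. Your route buys a clean description of $(n-1)$-APs in $\Z_n$ (a fact the paper essentially rediscovers in a harder form in Proposition~\ref{SB3}), but at the cost of an extra reduction and a coprimality argument that the direct consecutive-run construction makes unnecessary. Minor point: the statement is quantified over positive $k$, and your disposal of $k\in\{1,2\}$ leans on ``immediate from the definition,'' whereas the paper's consecutive-run argument already covers $k=2$ without special pleading (and $k=1$ is degenerate under the paper's definition of $k$-AP, which requires $k\geq 2$).
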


\bpf  If $k=n$ the result is obvious.  Now suppose that $k<n$ and
consider an exact $(n-1)$-coloring of $\Z_n$.  Then there are two numbers with
the same color and all other numbers are colored distinctly.
Suppose $x$ and $y$ are the the two numbers with the same color.
Then $\{x+1,...,x+k\}$ is a $k$-AP that does not contain $x$, and so
is rainbow.  Therefore $\aw(\Z_n,k)\le n-1$.  \epf\vspace{-10pt}

\begin{cor} \label{SB2}   For positive $n$, $\aw(\Z_n,n-1)=n-1$.
\end{cor}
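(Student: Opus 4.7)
The plan is to establish the two inequalities $\aw(\Z_n,n-1)\le n-1$ and $\aw(\Z_n,n-1)\ge n-1$ separately; both are short consequences of the results just proved.

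For the upper bound, I would simply invoke Proposition~\ref{SB1}. Since $k=n-1<n$, the ``only if'' direction of Proposition~\ref{SB1} gives $\aw(\Z_n,n-1)\ne n$, and combined with the trivial bound $\aw(\Z_n,n-1)\le n$ we conclude $\aw(\Z_n,n-1)\le n-1$.

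For the lower bound, the strategy is to exhibit an exact $(n-2)$-coloring of $\Z_n$ with no rainbow $(n-1)$-AP and apply Proposition~\ref{thm:ezlem}. The construction I have in mind is: choose any three distinct elements of $\Z_n$ and color all three with a single ``shared'' color; give each of the remaining $n-3$ elements its own private color. This is an exact coloring using $1+(n-3)=n-2$ colors. The key observation is that every $(n-1)$-AP in $\Z_n$ consists of $n-1$ distinct elements of $\Z_n$, hence omits exactly one element; therefore any such AP contains at least two of the three elements bearing the shared color, and so cannot be rainbow. By Proposition~\ref{thm:ezlem}, $\aw(\Z_n,n-1)\ge (n-2)+1=n-1$.

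There is no genuine obstacle here; the only thing to be mindful of is that $n$ must be large enough for the statement to make sense (so that $k=n-1\ge 3$ and the three-element color class fits inside $\Z_n$), which is implicit in the context. Combining the two bounds yields the equality $\aw(\Z_n,n-1)=n-1$.
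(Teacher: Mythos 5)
Your proof is correct, and the upper bound via Proposition~\ref{SB1} is exactly the paper's implicit route. For the lower bound, however, your construction is more elaborate than necessary. You carefully arrange a $3$-element color class and argue that every $(n-1)$-AP, omitting exactly one element of $\Z_n$, must repeat that shared color. This is valid, but there is nothing special about your coloring: \emph{any} exact $(n-2)$-coloring of $\Z_n$ has no rainbow $(n-1)$-AP, simply because a rainbow $(n-1)$-AP requires $n-1$ distinct colors and only $n-2$ are available. This is the same trivial fact the paper already invokes when it defines $\aw$ and notes $\aw(S,2)=2$; it is why $\aw(S,k)\ge k$ always holds (provided $|S|\ge k-1$). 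So you could replace your whole lower-bound paragraph with the one-line observation that $\aw(\Z_n,n-1)\ge n-1$ trivially, and then conclude via Proposition~\ref{SB1} as you do. Your version is not wrong, just heavier machinery than the statement calls for.
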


A pattern can be observed in the values of $\aw(\Z_n,n-2)$, and this is established in Proposition~\ref{SB3}.

\begin{prop}  \label{SB3} 
For positive $n\ge5$, if $n$ is prime then $\aw(\Z_n,n-2)=n-2$; otherwise $\aw(\Z_n,n-2)=n-1$.
\end{prop}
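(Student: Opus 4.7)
The plan is to begin by classifying the $(n-2)$-APs in $\Z_n$ for $n\ge 5$. If $a,a+d,\dots,a+(n-3)d$ is such an AP, then the order of $d$ in $\Z_n$ must be at least $n-2$; since this order equals $n/\gcd(n,d)$, we are forced into $\gcd(n,d)\le n/(n-2)<2$, hence $\gcd(n,d)=1$. A direct computation then produces a bijection between $(n-2)$-APs in $\Z_n$ and unordered pairs $\{x,y\}\subseteq\Z_n$ for which $y-x$ is coprime to $n$, where $\{x,y\}$ is the set of two elements the AP omits. Using this, I would record two free bounds: the trivial $\aw(\Z_n,n-2)\ge n-2$, because a rainbow $(n-2)$-AP needs $n-2$ distinct colors and so any exact $(n-3)$-coloring vacuously avoids one (then apply Proposition~\ref{thm:ezlem}); and $\aw(\Z_n,n-2)\le\aw(\Z_n,n-1)=n-1$ from monotonicity (Observation~\ref{thm:awnmonok}) together with Corollary~\ref{SB2}.

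For the prime case, I would close the gap from above by showing $\aw(\Z_n,n-2)\le n-2$. Any exact $(n-2)$-coloring of $\Z_n$ has color-class sizes forming a partition of $n$ into $n-2$ positive parts, so the only possibilities are one class of size $3$ with the rest singletons, or two classes of size $2$ with the rest singletons. In the first case, let $\{a,b,e\}$ be the size-$3$ class and set $\{x,y\}=\{a,b\}$; in the second, let $\{a_1,a_2\}$ and $\{b_1,b_2\}$ be the size-$2$ classes and set $\{x,y\}=\{a_1,b_1\}$. In either situation $\Z_n\setminus\{x,y\}$ uses all $n-2$ colors exactly once and so is rainbow, and because $n$ is prime the nonzero difference $y-x$ is automatically coprime to $n$, making $\Z_n\setminus\{x,y\}$ a genuine $(n-2)$-AP.

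For the composite case, I would close the gap from below by exhibiting an exact $(n-2)$-coloring that avoids rainbow $(n-2)$-APs. Let $p$ be the smallest prime factor of $n$; since $n\ge 5$ is composite, $p\le\sqrt{n}$ forces $2p<n$, so $0,p,2p$ are three distinct residues in $\Z_n$ whose pairwise differences $\pm p$ and $\pm 2p$ all share the factor $p$ with $n$. Color $\{0,p,2p\}$ with a common color and give every other element a private color; the result is exact on $n-2$ colors. A potential rainbow $(n-2)$-AP $\Z_n\setminus\{x,y\}$ must contain at most one element of the duplicated color class, which forces $\{x,y\}\subseteq\{0,p,2p\}$; but then $y-x$ is not coprime to $n$, so $\Z_n\setminus\{x,y\}$ is not even an $(n-2)$-AP. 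Proposition~\ref{thm:ezlem} then yields $\aw(\Z_n,n-2)\ge n-1$, completing the argument.

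The delicate point is really just the opening identification of $(n-2)$-APs with their omitted pairs, together with the arithmetic fact that $2p<n$ whenever $n\ge 5$ is composite with smallest prime factor $p$; once these are in hand, both halves of the proposition reduce to short combinatorial checks on the possible color-class structures of an exact $(n-2)$-coloring.
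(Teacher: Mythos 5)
Your proof is correct and follows essentially the same approach as the paper. The paper's prime-case argument also observes that the complement of any two elements is an $(n-2)$-AP (constructed directly via $a=x$, $d=y-x$), and the composite-case argument also colors $\{0,p,2p\}$ monochromatically and derives a contradiction from $p\mid d$; the only difference is that you make the ``omitted pair'' correspondence explicit as a bijection up front, which lets you treat both halves of the argument through a single lemma rather than reproving the relevant direction in each case.
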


\bpf  We trivially
have a lower bound of $n-2$ for $\aw(\Z_n,n-2)$.  First we assume $n$ is prime.  We claim that  for
any two distinct elements $x$ and $y$ there is an $(n-2)$-AP that
misses $x$ and $y$.  To see this, simply form the $n$-AP with
$a=x$ and $d=(y-x)$, this will cover all of $\Z_n$ and now removing the
first two terms leaves us with an $(n-2)$-AP that
does not contain $x$ or $y$.  So suppose we have an exact $(n-2)$-coloring. Then
either there is one color that occurs three times or two colors that each
occur twice, and in either case all other colors occur exactly once.  In either case we can choose two numbers to avoid and
then the remaining $n-2$ numbers are rainbow, but as just noted above
the remaining $n-2$ numbers are an arithmetic progression.
Therefore every $(n-2)$-coloring contains a rainbow progression.

 When $n$ is not a prime, let $p$ be the smallest prime divisor of $n$ and consider the $(n-2)$-coloring formed by coloring $0$, $p$ and $2p$ monochromatically, with the remaining numbers  all given distinct colors.  This is an $(n-2)$-coloring (since $2p<n$ by assumption that $n\ge 5$).  We claim this coloring has no rainbow $(n-2)$-AP  (along with the upper bound of $n-1$, this claim  establishes the result). Suppose that $K=\{a,a+d,\ldots,a+(n-3)d\}$ is a rainbow  $(n-2)$-AP, so all the elements of $K$ are distinct and $K$ necessarily misses two of $0, p, 2p$. Since $\Z_n$ cannot have a proper subgroup of order $n-2$, extending $K$ to a $n$-AP necessarily produces all elements of $\Z_n$ and thus  $\{a+(n-2)d,a+(n-1)d\}\subseteq\{0,p,2p\}$.  But then we have that $p$ divides $d=((a+(n-1)d)-(a+(n-2)d))$, showing that this arithmetic progression can have at most $\frac n p<n-2$ terms, which is a contradiction.  \epf

Proposition \ref{SB1}  shows that the ``if'' direction of Theorem \ref{LHawnkequalsnthm} ($k\geq \lceil\frac{n}{2}\rceil + 1$ implies $\aw([n],k)=n$) does not extend to $\Z_n$.
Example \ref{KH:Ex:Ext4} below shows that the extension of  Proposition \ref{thm:awmonepone} to $\Z_n$, which would assert that $\aw(\Z_n,k) \le  \aw(\Z_{n-1},k) +1$, is not true in general.  There are counterexamples in both the cases $k=3$  and $k\geq 4$.

\begin{ex}\label{KH:Ex:Ext4}{By Corollary \ref{LH:Zn3primefactor3}, $\aw(\Z_{30},3)=5$,  and $\aw(\Z_{29},3)=3$ (see Table \ref{tab:znk3} in Section \ref{ssec:awZn3}).  Furthermore,   $\aw(\Z_{8},4)=6$ and $\aw(\Z_{7},4)=4$ (see Table \ref{KH:Table:awznkcomputed}).
}\end{ex}

Example \ref{KH:Ex:Ext7} below shows that Theorem \ref{thm:awnsumoverns}, which bounds the anti-van der Waerden number of a sum in terms of the anti-van der Waerden numbers of the summands, does not extend to $\Z_n$.

\begin{ex}\label{KH:Ex:Ext7}{According to our computed data (see Table \ref{KH:Table:awznkcomputed}),
	\[
		\aw(\Z_{12},4) = 8 > 4 + 4 - 1 = \aw(\Z_5,4) + \aw(\Z_7,4) - 1.
	\]
There are also examples for $k=3$, such as  $ \aw(\Z_{54},3)= 6>3+3-1=\aw(\Z_{47},3)+\aw(\Z_{7},3) - 1 $.  
}\end{ex}


\section{Computation}\label{sec:compute}

Many of the results we have proved in this paper were first conjectured from examination of data.
In this section, we briefly discuss an efficient algorithm to find an exact $r$-coloring of $[n]$ or $\Z_n$ that avoids a rainbow $k$-AP, if such a coloring exists.
For the sake of brevity, we will focus on the case of coloring $[n]$ since this case has a few extra properties that the $\Z_n$ case does not.
Specifically, we have $[m] \subseteq [n]$ for all $m \leq n$ while $\Z_n$ contains a copy of $\Z_m$ if and only if $m$ divides $n$.

Fix $k$, $n$, and $r$ and assume that all values of $\aw([m],k)$ have been computed for $k \leq m < n$.
Let $c : [n] \to [r] \cup \{*\}$ be a function called a \emph{partial $r$-coloring}, where every position $i$ has color $c(i) \in[r]$ or $c(i) = *$ and $i$ is \emph{uncolored}.
By starting with all positions uncolored, we recursively attempt to extend a partial $r$-coloring $c$ where the positions in $[i]$ are colored to an exact $r$-coloring $c'$ that avoids rainbow $k$-APs.
We branch at each recursive call for all possible choices of color for $c(i+1)$ such that no $k$-AP within $[i+1]$ is colored with $k$ distinct colors.
To guarantee that no chosen color creates a rainbow $k$-AP, we maintain a list of sets $D(j) \subseteq [r]$ that contain all of the possible colors for the position $j$.
Specifically, assigning $c(j)$ to be any color in $[r] \setminus D(j)$ will immediately create a rainbow $k$-AP.
Whenever a color is assigned to a position $i$, we consider a $k$-AP, $X$, whose second-to-last element is $i$.
If the set $c(X) = \{ c(i') : i' \in X - \max X \}$ contains $k-1$ distinct colors, we say that $X$ is an \emph{almost-rainbow $k$-AP} and the color for $\max X$ must be one of these $k-1$ colors.
Therefore, we can update $D(\max X)$ to be $D(\max X) \cap c(X)$.
For simplicity, we update $D(i)$ to be $\{ c(i) \}$ when $i$ is assigned the color $c(i)$.

We can also make a few small adjustments to greatly reduce the search space.
First, we assume that the coloring $c$ is lexicographically-minimum: for two colors $a, b \in [r]$ with $a < b$, we assume that the first position with color $a$ appears before the first position with color $b$.
Second, the domains $D(j)$ contain the possible colors for the positions that remain uncolored.
If $\bigcup_{j \in [n]} D(j) \neq [r]$, then $c$ cannot extend to an exact $r$-coloring.
Finally, if the first $i$ positions are all colored with the color $1$, then for any extension of $c$ to an exact $r$-coloring of $[n]$, the last $n - i + 1$ positions form an exact $r$-coloring.
Thus, if $\aw([n-i+1],k) \leq r$, then it is impossible to extend $c$ to an exact $r$-coloring of $[n]$ without creating a rainbow $k$-AP.

Our recursive algorithm is given as Algorithm~\ref{alg:norainbowkaps} and is initialized by Algorithm~\ref{alg:norainbowkapsbase}.
Similar algorithms are implemented for the case of $r$-coloring $\Z_n$.
All source code and data are available online\footnote{All source code and data can be found at 

\begin{algorithm}
\caption{\label{alg:norainbowkaps}FindColorings($k, r, n, \aw, c, D, i$) -- Find exact $D$-colorings on $[n]$ that avoid rainbow $k$-APs and extend the coloring $c$ on $[i-1]$. Assume $\aw([m],k)$ is known for all $m < n$.}
\begin{algorithmic}
\IF{$i \equiv n$}
	\STATE \textbf{output} $c$
	\RETURN
\ELSIF{$\cup_{j\in [n]} D(j) \neq [r]$}
	\RETURN \textit{// This coloring cannot extend to an exact $r$-coloring!}
\ELSIF{$i > 2$ \textbf{and} $\forall j < i, c(j) \equiv 1$ \textbf{and} $\aw([n - i + 2], k) \leq r$}
	\RETURN \textit{// An exact $r$-coloring extending $c$ induces an exact $r$-coloring on $\{i-1,\dots,n\}$.}
\ENDIF
\STATE $M \leftarrow \max\{ c(j) : j < i\} \cup \{0\}$
\STATE \textit{// Attempt all colors in the domain $D(i)$ that are at most $M+1$.}
\FOR{\textbf{all} $a \in D(i) \cap [M+1]$}
	\STATE $c(i) \leftarrow a$, \quad $D(i) \leftarrow \{ a\}$
	\STATE \textit{// Update all domains $D'(t)$ when almost-rainbow $k$-APs exist.}
	\STATE $D' \leftarrow D$
	\FOR{\textbf{all} $d \in \{1,\dots,\lceil i/(k-2)\rceil-1\}$}
		\STATE $A \leftarrow \varnothing$
		\FOR{\textbf{all} $\ell \in \{0,\dots,k-2\}$}
			\STATE $t \leftarrow i - \ell\cdot d$
			\STATE $A \leftarrow A \cup \{ c(t)\}$
		\ENDFOR
		\IF{$|A| \equiv k-1$}
			\STATE $t \leftarrow i + d$
			\STATE $D'(t) \leftarrow D'(t) \cap A$
		\ENDIF
	\ENDFOR
	\STATE \textbf{call} FindColorings($k,r,n,\aw, c, D', i+1$)
\ENDFOR
\end{algorithmic}
\end{algorithm}

\begin{algorithm}
\caption{\label{alg:norainbowkapsbase}FindColoring($k, r, n, \aw$) -- Find exact $r$-colorings on $[n]$ that avoid rainbow $k$-APs.}
\begin{algorithmic}
\FOR{\textbf{all} $i \in [n]$}
	\STATE $c(i) \leftarrow *$
	\STATE $D(i) \leftarrow [r]$
\ENDFOR
\STATE \textbf{call} FindColorings($k, r, n, \aw, c, D, 1$)
\end{algorithmic}
\end{algorithm}




 \section{Conjectures and open questions}\label{sec:conclude}
  We conclude by summarizing some open questions and conjectures, beginning with those related to $[n]$.

Uherka~\cite{U13} observed that $\aw([n],3)$ is not a monotone function in $n$, as there are values of $n$ where $\aw([n],3)=\aw([n-1],3)-1$. Does this happen infinitely often? Are larger drops possible?

\begin{conj}
For   positive integers $n$ and $k$, $\aw([n],k)\geq \aw([n-1],k)-1$.
\end{conj}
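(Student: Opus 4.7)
The plan is to prove the contrapositive: starting from an exact $s$-coloring $c\colon [n-1]\to[s]$ with no rainbow $k$-AP, where $s=\aw([n-1],k)-1$, I would construct an exact $(s-1)$-coloring of $[n]$ with no rainbow $k$-AP, which yields $\aw([n],k)\geq s=\aw([n-1],k)-1$. The construction proceeds in two stages: first merge two colors of $c$, then extend to position $n$.

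In the first stage, I would pick distinct $\alpha,\beta\in[s]$ and define $\hat c$ from $c$ by replacing every occurrence of $\alpha$ with $\beta$. Then $\hat c$ is an exact $(s-1)$-coloring of $[n-1]$; since merging colors can only decrease the number of distinct colors on any AP, $\hat c$ inherits the no-rainbow property. In the second stage, I would assign a color to position $n$. The only potentially rainbow $k$-APs in the extension are those ending at $n$, namely $\{n-(k-1)d, n-(k-2)d,\ldots, n-d, n\}$ for $1\le d\le \lfloor(n-1)/(k-1)\rfloor$. Setting $V_d := \{\hat c(n-id):1\le i\le k-1\}$, such an AP is rainbow iff $|V_d|=k-1$ and $\hat c(n)\notin V_d$. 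Consequently a safe extension exists iff
\[
I(\alpha,\beta)\;:=\;\bigcap_{d\,:\,|V_d|=k-1} V_d \;\neq\;\emptyset,
\]
and the crux is to choose $(\alpha,\beta)$ so that $I(\alpha,\beta)$ is nonempty.

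A natural first attempt is to take $\beta$ to be a color appearing in as many of the pre-merge sets $V_d^{c}:=\{c(n-id):1\le i\le k-1\}$ as possible, then choose $\alpha$ from $\bigcap_{d\in D_\beta} V_d^c$, where $D_\beta=\{d:|V_d^c|=k-1,\ \beta\notin V_d^c\}$. With such a choice, every merged $V_d$ contains $\beta$, so $\beta\in I(\alpha,\beta)$. If this merge-then-append strategy fails for every $\beta$, I would try the symmetric prepend strategy: shift $c$ onto $\{2,\ldots,n\}$ and assign a color to position $1$, analyzing the analogous APs $\{1,1+d,\ldots,1+(k-1)d\}$.

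The main obstacle---and, I expect, the principal technical difficulty---is ruling out the worst case in which every merge and both endpoint constructions fail simultaneously. In this scenario, the top and bottom of $[n-1]$ must each contain a large family of $(k-1)$-APs with $k-1$ distinct colors whose complements together cover $[s]$ under every possible merge. This is a very rigid structural requirement, and the hope is to show that such a configuration already forces a rainbow $k$-AP inside $[n-1]$, contradicting the hypothesis on $c$. Executing this structural contradiction, perhaps through a careful combinatorial counting argument on the $V_d^c$ or through a refined understanding of extremal rainbow-free colorings of $[n-1]$, is the step I anticipate will require the most care.
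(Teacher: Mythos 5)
This is stated in the paper as an open conjecture; the authors offer no proof, so there is no argument of theirs for you to match against. What you have written is a proof \emph{plan}, not a proof, and by your own account the decisive step is missing. I will therefore assess the plan on its own terms.

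The logical skeleton is fine: producing, from an exact $s$-coloring of $[n-1]$ with no rainbow $k$-AP (where $s=\aw([n-1],k)-1$), an exact $(s-1)$-coloring of $[n]$ with no rainbow $k$-AP, would indeed give $\aw([n],k)\ge s$ via Proposition~\ref{thm:ezlem}. The first stage (merging two color classes preserves the rainbow-free property and drops the color count by one) is correct, and the identification of the obstruction at position $n$ --- that $\hat c(n)$ must lie in $\bigcap_{d:|V_d|=k-1}V_d$ --- is the right local condition.

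The genuine gap is precisely what you flag: you never show that some choice of merged pair $(\alpha,\beta)$ (or, in the prepend variant, some choice at position $1$) leaves this intersection nonempty. The greedy heuristic of taking $\beta$ to hit as many almost-rainbow windows as possible, then choosing $\alpha$ from the remaining common intersection, is not shown to terminate successfully, and there is no argument that the failure of \emph{every} merge-and-append and \emph{every} merge-and-prepend forces a rainbow $k$-AP inside $[n-1]$. That implication is the entire content of the conjecture in this formulation, and ``a rigid structural requirement'' is an intuition, not a proof. Moreover, the approach is not even obviously sufficient in principle: it could be that, for some extremal $c$ on $[n-1]$, no local merge-plus-one-endpoint modification yields a rainbow-free $(s-1)$-coloring of $[n]$, while an entirely different $(s-1)$-coloring of $[n]$ (not obtained by modifying $c$) still exists. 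A complete proof would have to either rule out this possibility or abandon the ``extend an extremal coloring of $[n-1]$'' framework. As it stands, the proposal reduces the conjecture to a harder-looking combinatorial claim about the $V_d$ sets without establishing it, so the conjecture remains open.
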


Conjecture~\ref{RMconj} states that the lower bound $\aw([n],3)\geq \lceil\log_3n\rceil+2$ is correct to within an additive constant. We further conjecture that the lower bound in Lemma~\ref{awn3log3lowerbnd} is in fact the exact value when $n$ is a power of three.  It is true for the computed data available (see Remark \ref{RK:obs:values}).

\begin{conj}\label{LHaw3tothem3}  Let $m$ be a nonnegative integer. Then
$\aw([3^m],3)=m+2$.
\end{conj}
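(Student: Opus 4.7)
The lower bound $\aw([3^m],3) \geq m+2$ is immediate from Lemma~\ref{awn3log3lowerbnd}, since $\lceil \log_3 3^m \rceil + 2 = m+2$; moreover, Example~\ref{LHj1} exhibits a matching exact $(m+1)$-coloring $c(x) = m+1 - v_3(x)$ of $[3^m]$ with no rainbow 3-AP. The content of the conjecture is therefore the upper bound $\aw([3^m],3) \leq m+2$, i.e.\ that every exact $(m+2)$-coloring of $[3^m]$ contains a rainbow 3-AP. My plan is to proceed by induction on $m$, with base cases $m \leq 3$ handled by Remark~\ref{RK:obs:values}.

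For the inductive step, suppose toward a contradiction that $c$ is an exact $(m+2)$-coloring of $[3^m]$ with no rainbow 3-AP. Partition $[3^m]$ by residue modulo $3$ into $R_0, R_1, R_2$, each of size $3^{m-1}$. Any affine bijection $R_j \to [3^{m-1}]$ preserves 3-APs, so the coloring $c$ restricted to $R_j$ induces a coloring of $[3^{m-1}]$ with no rainbow 3-AP; by the inductive hypothesis, each residue class uses at most $m+1$ colors. Since $c$ is exact on $m+2$ colors, some color $\alpha$ is absent from at least one class and present in another. The crux is then to exploit \emph{cross-class} 3-APs: every 3-AP $\{a,b,c\}$ whose common difference is not divisible by $3$ has exactly one element in each $R_j$, and the non-rainbow condition on these triples tightly links the three residue palettes. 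Concretely, fixing $x \in R_i$ with $c(x) = \alpha$, each $y \in R_j$ ($j \neq i$) produces a third element $z \in R_k$ of a 3-AP through $x$, with $c(z) \in \{c(y), \alpha\}$. Varying $x$ and $y$ and the role of middle/endpoint, I would force enough coincidences to either manufacture a rainbow 3-AP within some $R_j$ (contradicting the inductive hypothesis applied to the restricted coloring of $[3^{m-1}]$) or a rainbow cross-class 3-AP in $[3^m]$.

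The main obstacle, and presumably the reason this remains a conjecture, is that the existing tools fall short by a factor of $(3/2)^m$: Lemma~\ref{RK:lem:powersoftwo} guarantees only $b_{m+2} \geq 2^{m+1}$, which is less than $3^m$ for all $m \geq 2$, so a counting argument based solely on the number of first-appearance positions cannot close the gap. Any successful proof must track the locations of color-first-appearances recursively against the ternary structure of $[3^m]$, likely by recording a finer invariant than $|c(R_j)|$ on each residue class (for instance, a sequence of triples of positions mirroring Lemma~\ref{RK:lem:powersoftwo} within each $R_j$) and propagating constraints between classes via the cross-class 3-APs. Such an argument, if it succeeds for the specific sequence $n = 3^m$, would likely also resolve Conjecture~\ref{RMconj} by extending the bound $\aw([n],3) \leq \lceil\log_3 n\rceil + C$ to all $n$ via the monotonicity-type bounds in Propositions~\ref{RK:thm:lwrawboundoneforkequalsthree} and~\ref{RK:prop:upper}.
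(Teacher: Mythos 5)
This statement is Conjecture~\ref{LHaw3tothem3}, and the paper offers no proof of it; it is explicitly posed as an open problem. You correctly recognize this. The pieces you identify as established are indeed established: the lower bound $\aw([3^m],3)\ge m+2$ is Lemma~\ref{awn3log3lowerbnd}, Example~\ref{LHj1} gives the matching extremal $(m+1)$-coloring $c(x)=m+1-e_3$ (where $3^{e_3}\,\|\,x$), and Remark~\ref{RK:obs:values} confirms equality for $m\le 3$. The open content is exactly the upper bound, as you say. Since you candidly present the inductive sketch as incomplete, there is no proof here to compare against the paper's (nonexistent) one.

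Two remarks on your sketch. First, a small slip: the inductive hypothesis $\aw([3^{m-1}],3)=m+1$ means a rainbow-3-AP-free coloring of $[3^{m-1}]$ (hence of any residue class $R_j$ after your affine bijection) uses at most $m$ colors, not $m+1$; this only tightens the constraint you are trying to exploit, but the cross-class propagation step still has no argument attached to it, and this is precisely where the difficulty lies. Second, your diagnosis of the obstruction is apt and worth emphasizing: Lemma~\ref{RK:lem:powersoftwo} forces only geometric doubling of first-occurrence positions, giving $b_{m+2}\ge 2^{m+1}<3^m$ for $m\ge 2$, so any position-counting argument built solely on that lemma falls short by a factor of $(3/2)^m$. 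A proof would need a finer invariant, as you suggest. Your final observation, that resolving the case $n=3^m$ would likely yield Conjecture~\ref{RMconj} via Propositions~\ref{RK:thm:lwrawboundoneforkequalsthree} and~\ref{RK:prop:upper}, is a sensible heuristic and consistent with the paper's framing of these two conjectures as companions.
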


\begin{quest}\label{LHaw3n3}
Is it true that $\aw([3n],3)=\aw([n],3)+1$ for all positive integers $n$?
\end{quest}



We now turn our attention to $\Z_n$.  
\begin{quest}\label{quest:infprimes} Are there infinitely many primes $p$ such that $\aw(\Z_p,3)=3$? \end{quest}
Based on \cite[Theorem 3.5]{JLMNR03} (see also Theorem \ref{JLMNR3.5}), one approach to finding primes $p$ for which $\aw(\Z_p,3)=3$ is to search for primes $p$ such that the multiplicative group $\Z_p^\times$ is generated by $2$.  
 However, the existence of an infinite family of such primes is still open.

\begin{conj}[Artin's Conjecture]\label{CH:quest:awznqfour} {\rm  \cite[p. 217]{S}}
	There are infinitely many primes $p$ such that $2$ is a generator of the multiplicative group $\Z_p^\times$.
\end{conj}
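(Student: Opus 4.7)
The plan is to attack Artin's primitive root conjecture for the specific base $a=2$, namely to show $N_2(x) := \#\{p \le x : 2 \text{ is a primitive root mod } p\} \to \infty$. I should be upfront that this is a longstanding open problem in analytic number theory, with no known unconditional proof, so my proposal is really to carry out Hooley's conditional argument (from his 1967 paper) under a suitable Riemann hypothesis, since this is the only approach that is actually known to succeed in any form.

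First, I would set up the sieve. The element $2$ is a primitive root modulo $p$ iff, for every prime $\ell$ dividing $p-1$, the element $2$ is not an $\ell$-th power residue mod $p$. By Möbius inversion, one writes
\[
N_2(x) \;=\; \sum_{d=1}^{\infty} \mu(d)\, \pi_2(x,d),
\]
where $\pi_2(x,d)$ counts primes $p \le x$ such that $d \mid p-1$ and $2$ is a $d$-th power residue modulo $p$. The condition ``$d \mid p-1$ and $2$ is a $d$-th power mod $p$'' translates to $p$ splitting completely in the Kummer-type number field $K_d := \mathbb{Q}(\zeta_d, 2^{1/d})$, which has degree roughly $d \varphi(d)$ over $\mathbb{Q}$ (with small adjustments coming from the fact that $\mathbb{Q}(\sqrt{2})\subset\mathbb{Q}(\zeta_8)$). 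So $\pi_2(x,d)$ can be estimated by the Chebotarev density theorem applied to $K_d/\mathbb{Q}$.

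Second, I would split the sum at parameters $\xi_1 < \xi_2 < \xi_3 = x-1$ into ranges $d \le \xi_1$, $\xi_1 < d \le \xi_2$, $\xi_2 < d \le \xi_3$. On $d \le \xi_1$ one uses an effective Chebotarev estimate (this is where GRH for the Dedekind zeta functions $\zeta_{K_d}$ is crucial, giving a power-saving error of size $x^{1/2}\log(dx)$) to extract the main term, which collapses after the Möbius sum to Artin's constant
\[
A_2 \;=\; \prod_{\ell \text{ prime}}\!\left(1 - \frac{1}{\ell(\ell-1)}\right)
\]
times $\pi(x)$, up to a correction factor because $2$ is not squarefree-free in the relevant sense. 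For the middle range, elementary bounds on $\pi_2(x,d) \le x/d$ combined with Brun--Titchmarsh handle $\xi_1 < d \le \xi_2 = \sqrt{x}/\log^2 x$. The genuinely delicate range is $\sqrt{x} < d < x-1$, where Hooley's trick bounds contributions by counting primes $p \le x$ with $p \equiv 1 \pmod{d}$ for two distinct $d$'s simultaneously, leading to a bound on the order of $x/\log^2 x$, small enough to be absorbed.

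The main obstacle, and the reason this remains a conjecture, is the step requiring GRH: unconditional effective Chebotarev only gives an error of the shape $x\exp(-c\sqrt{\log x})$ with implicit dependence on the discriminant of $K_d$ that is far too weak to sum nontrivially over $d \le \xi_1$. Without a zero-free region improvement for these Kummer $L$-functions, the sieve does not close, and one is left with partial results such as Heath-Brown's theorem (building on Gupta--Murty) that at most two prime values of the base can be exceptions to Artin's conjecture; since this pigeonhole argument cannot identify which primes are genuine, it does not settle the case $a=2$. Consequently my honest proposal is: assume GRH for $\zeta_{K_d}$ for all squarefree $d$, execute Hooley's three-range sieve as above to obtain $N_2(x) \sim A_2\,\pi(x)$, and flag the unconditional version as beyond current technology.
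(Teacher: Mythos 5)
The statement you were asked about is labeled a \emph{conjecture} in the paper, not a theorem: it is Artin's primitive root conjecture for the base $2$, cited to Silverman's textbook, and the paper does not (and cannot) prove it. The authors invoke it only to observe that, combined with Theorem~\ref{JLMNR3.5}, a positive resolution would supply infinitely many primes $p$ with $\aw(\Z_p,3)=3$. So there is no paper proof for your attempt to be compared against.

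Your writeup is, appropriately, not a proof but a correct high-level summary of Hooley's 1967 conditional argument: the M\"obius-sieve decomposition over Kummer extensions $K_d = \mathbb{Q}(\zeta_d, 2^{1/d})$, the three-range splitting, the reliance on GRH for the Dedekind zeta functions of the $K_d$ to get a summable Chebotarev error term, and the collapse of the main term to Artin's constant (with the well-known correction factor because $\sqrt{2}\in\mathbb{Q}(\zeta_8)$, an entanglement you flag). You also correctly note the state of the unconditional art (Gupta--Murty, Heath-Brown: at most two prime bases can fail, but no specific base is known to succeed). The only substantive caveats I would add are minor: Heath-Brown's result applies to a slightly broader class than ``prime bases,'' and the correction to Artin's constant for $a=2$ is that $A_2 = A$ (no correction is actually needed for $a=2$, since $2$ is squarefree and $2\not\equiv 1 \pmod 4$); the rational correction factor enters for other bases. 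None of this affects the main point: you have correctly identified the statement as an open conjecture, explained the only known viable (conditional) approach, and made clear that an unconditional proof is beyond current technology, which is exactly the honest assessment the paper's framing as a conjecture invites.
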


If Artin's Conjecture holds,  it would give us an infinite family of $\Z_p$ such that $\aw(\Z_p,3)=3$.  Jungi\'c et al. also established another family of primes $p$ with $\aw(\Z_p,3)=3$ (see Theorem \ref{JLMNR3.5}), namely those primes $p$ such that  $\frac{p-1}{2}$ is odd and the order of 2 in $\Z_p^\times$ is $\frac{p-1}{2}$.

\section*{Acknowledgements}

We thank the referee for a very helpful report, including pointing out some of the consequences \cite[Theorem 3.5]{JLMNR03} and suggesting the current proof of Lemma~\ref{LHprop220nolab}, which we had proved another way using Proposition~\ref{LHj2}.  The majority of this research was done during the 2013-2014 academic year at Iowa State University.



\end{document}